\begin{document}
\def\Diff{\text{Diff}}
\def\Max{\text{max}}
\def\P{\mathbb P}
\def\R{\mathbb R}
\def\N{\mathbb N}
\def\Z{\mathbb Z}
\def\a{{\underline a}}
\def\b{{\underline b}}
\def\n{{\underline n}}
\def\Log{\text{log}}
\def\loc{\text{loc}}
\def\inta{\text{int }}
\def\det{\text{det}}
\def\exp{\text{exp}}
\def\Re{\text{Re}}
\def\lip{\text{Lip}}
\def\leb{\text{Leb}}
\def\dom{\text{Dom}}
\def\diam{\text{diam}\:}
\def\supp{\text{supp}\:}
\newcommand{\ovfork}{{\overline{\pitchfork}}}
\newcommand{\ovforki}{{\overline{\pitchfork}_{I}}}
\newcommand{\Tfork}{{\cap\!\!\!\!^\mathrm{T}}}
\newcommand{\whforki}{{\widehat{\pitchfork}_{I}}}
\newcommand{\marginal}[1]{\marginpar{{\scriptsize {#1}}}}
\def\sR{{\mathfrak R}}
\def\sM{{\mathfrak M}}
\def\sA{{\mathfrak A}}
\def\sB{{\mathfrak B}}
\def\sY{{\mathfrak Y}}
\def\sE{{\mathfrak E}}
\def\sP{{\mathfrak P}}
\def\sG{{\mathfrak G}}
\def\sa{{\mathfrak a}}
\def\sb{{\mathfrak b}}
\def\sc{{\mathfrak c}}
\def\se{{\mathfrak e}}
\def\sg{{\mathfrak g}}
\def\sd{{\mathfrak d}}
\def\sr{{\mathfrak {sr}}}
\def\ss{{\mathfrak {s}}}
\def\spp{{\mathfrak {p}}}
\def\arr{\overleftarrow}

\newtheorem{prop}{Proposition} [section]
\newtheorem{thm}[prop] {Theorem}
\newtheorem{defi}[prop] {Definition}
\newtheorem{lemm}[prop] {Lemma}
\newtheorem{sublemm}[prop] {Sub-Lemma}
\newtheorem{cor}[prop]{Corollary}
\newtheorem{theo}{Theorem}
\newtheorem{theoprime}{Theorem}
\newtheorem{Claim}[prop]{Claim}
\newtheorem{fact}[prop]{Fact}
\newtheorem{Scholium}{Scholium}
\newtheorem{\theScholium}{\Alph{Scholium}}
 
\newtheorem{coro}[theo]{Corollary}
\newtheorem{defprop}[prop]{Definition-Proposition}
\newtheorem{propdef}[prop]{Proposition-Definition}

\newtheorem{question}[prop]{Question}
\newtheorem{conjecture}[prop]{Conjecture}

\theoremstyle{remark}
\newtheorem{exam}[prop]{Example}
\newtheorem{rema}[prop]{Remark}

\renewcommand{\thetheo}{\Alph{theo}}
\renewcommand{\thetheoprime}{\Alph{theo}$'$}

\newtheorem{propfonda}[prop]{\bf Fundamental property of the parablenders}


\title{Generic family with robustly infinitely many sinks}

\author{Pierre Berger
}

\date{\today}

\maketitle

\begin{abstract}
We show, for every $r>d\ge 0$ or $r=d\ge 2$, the existence of a Baire generic set of $C^d$-families of $C^r$-maps $(f_a)_{a\in \R^k}$ of a manifold $M$ of dimension $\ge 2$, so that for every $a$ small the map $f_a$ has infinitely many sinks. When the dimension of the manifold is greater than $3$, the generic set is formed by families of diffeomorphisms. When $M$ is the annulus, this generic set is formed by local diffeomorphisms.
This is a counter example to a conjecture of Pugh and Shub.
\end{abstract}
\section*{}
The aim of dynamical systems is to study the orbits of a given system, especially their asymptotic and statical behaviors. For this end, 
by the Birkhoff ergodic Theorem, given a dynamics $f$ of a compact manifold $M$, it suffices to study the invariant probability measures $\mu$ ($f^*\mu=\mu$) which are \emph{ergodic} (that is $\mu(A)=0$ or $1$ for every $f$-invariant set $A$).
Indeed, we know that for $\mu$-almost every point $x\in M$,   for every open subset $U$, the following property holds:
\begin{equation}\tag{$\mathcal B$}\lim_{N} \frac1N  \mathrm{Card}\{i\le N\colon f^i(x)\in U\}=\mu(U)\;.\end{equation}
In other words, all the points in a set of full $\mu$-measure have their orbits which share the same statistical behavior. The \emph{basin $B_\mu$}  of $\mu$ is the set of points in $M$ which satisfy $(\mathcal B)$ for every open subset $U$.

The measure $\mu$ is  \emph{physical} when the basin $B_\mu$ is of positive Lebesgue measure. 

Natural questions that arise are the following: for a typical dynamical systems, does Lebesgue almost every point belong to the basin of a physical measure? 
Is the number of ergodic, physical measures finite?

A positive answer to both questions would justify the mere use of statistical data to describe any typical bounded, finite-dimensional dynamics.
It would imply that the typical orbits of such systems have always a statistical behavior, and that the number of different such statistical behaviors is finite.

It is actually the gamble that many scientists do while evaluating their data.  It is also the spirit  I understand of the main global conjecture of Palis \cite{Pa95, Pa05}. 
 
Nevertheless, there are examples of dynamics without physical measure, which can be quadratic maps of the interval \cite{Jo87, HK90}, and surface flow (R. Bowen counter example).

 But these counter examples may be not typical.

Also, by the work of Newhouse \cite{Newhouse}, for every manifold $M$ of dimension $\ge 2$   and every $r\ge 2$,   there exist an open set $U\in Diff^r(M)$, and a Baire residual subset $\mathcal R\subset U$  such that every $f\in \mathcal R$ has infinitely many sinks, and so an infinite number of ergodic, physical measures.
We recall that a subset $\mathcal R\subset U$ is \emph{Baire residual} if it is a countable intersection of open and dense subsets of $U$. The example of Newhouse is then typical in a topological sense.  

Yet, there exist examples of residual sets of the real line with zero Lebesgue measure (such as the set of Liouville numbers). Hence in the measure theoretical point of view the example of Newhouse may be not typical. However, there is no canonical measure on $Diff^r(M)$. 
Nevertheless, there are several notions of prevalence which generalize the concept of "full Lebesgue measure" for Banach spaces, a panorama of them can be found in \cite{HK10}. 

In this work we will deal with the notion of \emph{typical dynamical system in the Kolmogorov sense}.

\begin{defi} Let $k\ge 0$, $d\ge 0$. A property $\mathcal P$ is valid for \emph{typical maps of $C^r(M,M)$ in the $C^d$-$k$-Kolmogorov sense}, if for all generic families $(f_a)_a\in C^d([-1,1]^k, C^r(M,M))$, for Lebesgue almost every parameter $a$, the map $f_a$ satisfies the property $\mathcal P$.   
\end{defi}

It is a conjecture of Pugh and Shub \cite[Conj. 3]{PS96} that a typical dynamical systems in the Kolmogorov sense has locally finitely many attractors (more precisely that the equivalence class of points in the chain recurrent sets are open in the chain recurrent set).

In low dimensional dynamics, this notion was also used in the Palis programme. He conjectured the (local) finiteness of the attractors for typical dynamics in the Kolmogorov sense for one dimensional dynamics and surface diffeomorphisms (see $v^{th}$-item of Main Global Conjecture in  \cite{Pa95, Pa05}, Conjecture 5 in \cite{Pa08}, and Conjecture 1 p. 150 in \cite{PT93}).

For one dimensional dynamics he moreover conjectured that Lebesgue almost every point belongs to the basin of a physical invariant probability.  This conjecture has been fully proved for one-dimensional maps displaying only one critical point: by Lyubich \cite{Ly02} for quadratic families, Avila, de Melo and Lyubich \cite{ALDM03} in the analytic case under the hypothesis of negative Schwartzian derivative and, Avila and Moreira  announced that the result holds in the complement of a set of positive (Hausdorff) codimension \cite{AM02}.
Let us mention also the result of Kozlovski, Shen and van Strien showing the density of hyperbolicity for multimodal maps \cite{KSvS07}.

The finiteness of attractors is well known to be true for dynamics which are uniformly hyperbolic. Another Palis conjecture states that homoclinic tangencies appear densely in the complement of uniformly hyperbolic dynamics for surface diffeomorphisms. The latter conjecture has been proved for $C^1$-diffeomorphisms \cite{PS00}, and in a weak sense for holomorphic maps of $\mathbb C^2$ \cite{LD13, BD14}.

That is why Conjectured 5 in \cite{Pa08} states that in the unfolding of homoclinic tangencies for generic one-dimensional
parameter families of $C^2$-surface diffeomorphisms, with total probability in the parameter line, the corresponding diffeomorphisms do not display infinitely many attractors, in particular sinks, in a sufficiently small neighborhood of the closure of the homoclinic orbit.
The latter conjecture has been proved in a weaker sense in many contexts \cite{TeYo, GoKa, BeSi, PY09, BC2, BY, BV2, Berhen, Berent}. 

In the opposite direction D. Turaev showed in a conference \cite{Tu} that in the unfolding of a dissipative homoclinic tangency for surface diffeomorphisms, there is a dense set of $C^d$-families which have infinitely many sinks for a parameter interval (depending on the family).  

In this work, we show that for surface local $C^r$-diffeomorphisms or diffeomorhisms of a $n$-manifold, $n\ge 3$, 
it is not true that a typical $C^r$-dynamical systems has finitely many attractors, in the $C^d$-$k$-Kolmogorov sense for all  
$k\ge 0$, $\infty \ge r>d\ge 0$ or $\infty > r= d\ge 2$. In particular, this gives a counter example to Pugh-Shub Conjecture for these classes of regularity.

More precisely, we show the following Theorems:
\begin{theo}\label{theo1}
For every compact surface $M$, for all $\infty \ge r>d\ge 0$ or $\infty > r= d\ge 2$, for all $k\ge 0$,
there exists an open set  $\hat U$ in $C^d(\R^k, C^r(M,M))$ and a Baire residual set $\mathcal R$ in  $\hat U$ so that for every 
$(f_a)_a\in \mathcal R$, for every $|a|\le 1$, the map $f_a$ has infinitely many sinks.
\end{theo}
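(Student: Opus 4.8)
The plan is to build, inside a suitable open set $\hat U$ of $C^d$-families, a residual set of families each of which admits infinitely many sinks persisting over the whole parameter ball $\{|a|\le 1\}$. The mechanism must be the parameter-family analogue of Newhouse's: a thick horseshoe whose stable and unstable laminations have a persistent homoclinic tangency, so that by a small perturbation one creates a sink; the novelty required here is that this must be done \emph{simultaneously for all parameters}, which is exactly what the ``parablender'' technology (the \textbf{Fundamental property of the parablenders} announced in the preamble) is for.

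\medskip

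The key steps, in order, are as follows. First I would fix a starting family $(f_a)_a$ on a disc in $M$ containing a uniformly hyperbolic horseshoe $K_a$ depending on $a$, arranged so that the local stable and unstable manifolds of $K_a$ form, after finitely many iterates, a pair of ``fat'' Cantor sets whose product of thicknesses exceeds $1$ uniformly in $a$; this places us in the Newhouse regime for every parameter at once. Second, using the parablender construction, I would promote one of these laminations to a $C^d$-parablender: a family of hyperbolic sets whose local unstable manifolds, viewed as a family of Cantor sets parametrized by $a$, ``$C^d$-robustly cover'' any prescribed $C^d$-family of curves — i.e.\ for every nearby family $(g_a)_a$ and every $C^d$-family of points $(p_a)_a$ in the relevant region, some continuation of a point of the parablender has its local unstable manifold passing through $p_a$ with a tangency, for all $a$ simultaneously. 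This is the ``Fundamental property'' I am allowed to invoke. Third, I would exhibit, in $\hat U$, a family whose critical tangency locus is precisely such a $C^d$-family $(p_a)_a$ relative to the parablender; by the Fundamental property this tangency is robust in a $C^d$-neighborhood, and a standard perturbation near the tangency creates, for all $|a|\le 1$, a sink whose continuation persists on the whole ball. Fourth — the Baire-category step — for each $n$ I would let $\mathcal O_n$ be the set of families in $\hat U$ having at least $n$ sinks, each of period $\ge n$ and supported in a shrinking neighborhood of $K_a$, for every $|a|\le 1$; openness of $\mathcal O_n$ is clear from persistence of hyperbolic sinks, and density follows by iterating the perturbation of step three near deeper and deeper pieces of the parablender (the self-similarity of the horseshoe gives infinitely many independent tangency sites). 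Then $\mathcal R=\bigcap_n \mathcal O_n$ is residual and every family in it has, for every $|a|\le 1$, infinitely many sinks.

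\medskip

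The main obstacle — and the whole technical heart of the paper — is the construction of the $C^d$-parablender and the verification that its ``robust covering'' property survives in the $C^d$-topology on families rather than merely in the $C^r$-topology on a single map. In Newhouse's setting the thickness inequality is a $C^2$-open condition on one diffeomorphism; here the tangency must be created for a continuum of parameters at once, so one needs the unstable Cantor set to vary with $a$ in a way that is itself ``thick'' in jet space up to order $d$. Concretely one must control $d$ derivatives in $a$ of the positions of the gaps of the Cantor set and show these jets cover a neighborhood in the space of $d$-jets of curves; this is where the hypotheses $r>d$ (one extra derivative to spend on the geometry of the horseshoe) or $r=d\ge 2$ enter. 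A secondary, more routine difficulty is bookkeeping in the Baire argument: ensuring the $n$-th batch of sinks does not destroy the earlier ones and that all sinks for all $|a|\le 1$ persist under the further perturbations used to produce sink number $n+1$; this is handled by making each successive perturbation supported in a region disjoint from the already-created sinks and exponentially small, so the total perturbation remains $C^d$-small and the earlier hyperbolic periodic sinks (being structurally stable) survive.
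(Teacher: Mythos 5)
The central gap is in your step three, and it propagates into step four. The fundamental property of the parablender (Properties \ref{Ml2} and \ref{Ml2k}) gives, for \emph{each} parameter $a_0$, an unstable leaf — depending on $a_0$ — whose family is $C^d$-paratangent to the prescribed family of curves \emph{at} $a=a_0$; it does not give one tangency, let alone one sink, persisting over the whole ball $\{|a|\le 1\}$. A sink created by a small perturbation near a homoclinic tangency of high period $M$ persists, for a non-degenerate unfolding, only on a parameter window of size comparable to $\sigma_a^{-2M}$ (Remark \ref{pointapuit}), which shrinks with the perturbation. The whole point of the notion of homoclinic $C^d$-paratangency (tangency unfolding degenerately up to order $d$) is that it allows a perturbation of the \emph{family} producing a sink on a parameter box of size $\alpha$ independent of the perturbation size and of $M$ (Lemma \ref{robusthomoclinictangency}, Propositions \ref{sinkcreation} and \ref{quasisinkcreation}) — but that $\alpha$ is still small, not $1$. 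So your claim that ``a standard perturbation near the tangency creates, for all $|a|\le 1$, a sink whose continuation persists on the whole ball'' is unjustified and is not what the invoked property can deliver. To cover $[-1,1]^k$ one must apply the sink-creation statement at a lattice of parameter values of mesh $\sim\alpha$, with the perturbations supported in disjoint parameter boxes, verify that the technical bounds $(\theta,U,\nu)$ entering $\alpha_0$ are uniform in the base point so that $\alpha$ does not depend on $a_0$, and then iterate to build the open dense sets $\hat U_N$; even then one only gets $N$ sinks for $a$ outside a small set, and the paper needs an extra device ($2^k$ translated copies of the dynamics, a Borel–Cantelli argument, and the classical blender–Newhouse residual genericity of infinitely many sinks at $a=0$, Remark \ref{BlenderNewhouse}) to reach \emph{every} $|a|\le 1$. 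Your Baire step inherits the same gap: density of your sets $\mathcal O_n$ is precisely the statement that requires this lattice-and-copies construction; it does not follow from ``iterating the perturbation near deeper pieces of the parablender''.

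A secondary, but not merely cosmetic, issue is your steps one and two. You describe the mechanism as a thickness-based Newhouse horseshoe (``product of thicknesses exceeds $1$ uniformly in $a$'') upgraded to a parablender. That is not the mechanism of the paper, and it is doubtful it can be set up as you state for surface diffeomorphisms: the (para)blender here is a Bonatti–Diaz-type covering construction which, on a surface, forces the maps to be non-invertible local diffeomorphisms (a $4^{d'+1}$-to-one covering of the annulus) — this is exactly why Theorem \ref{theo1} lives in $C^r(M,M)$ rather than $Diff^r(M)$ and why diffeomorphisms require dimension $\ge 3$ (Theorem \ref{theo2}, via a solenoid lift). The parametric ``covering'' is obtained in jet space through the $2^{d'}$ branches translated by the polynomials $\epsilon P_\delta(a)$, not through thickness estimates on gaps of Cantor sets. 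Since you invoke the fundamental property as a black box this is partly a matter of framing, but the thickness framing is what leads you to expect a single tangency robust over all parameters, which is precisely what the property does not assert.
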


\begin{theo}\label{theo2}
For every compact manifold of dimension $\ge 3$, for all $\infty \ge r>d\ge 0$ or $\infty > r= d\ge 2$, for all $k\ge 0$,
there exists an open set  $\hat U$ in $C^d(\R^k, Diff^r(M,M))$ and a Baire residual set $\mathcal R$ in  $\hat U$ so that for every 
$(f_a)_a\in \mathcal R$, for every $|a|\le 1$, the map $f_a$ has infinitely many sinks.
\end{theo}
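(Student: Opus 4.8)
I would deduce Theorem~\ref{theo2} from Theorem~\ref{theo1} applied to the annulus $\mathbb{A}=S^1\times[0,1]$ — whose conclusion may be taken in the class of local diffeomorphisms, as stated for the annulus in the abstract — by realising a family of local diffeomorphisms of $\mathbb{A}$ as the base of a family of diffeomorphisms of $M$ carrying a uniformly hyperbolic solenoidal attractor. Since $\dim M=n\ge 3$, the manifold $M$ contains an embedded solid $n$-torus $V\cong\mathbb{A}\times\mathbb{D}^{n-2}$ inside a ball, the fibre $\mathbb{D}^{n-2}$ being nontrivial precisely because $n\ge 3$; this is where the hypothesis on the dimension enters. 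Since $V$ lies in a ball, every embedding $V\to\mathrm{int}\,V$ extends to a diffeomorphism of $M$ in a fixed, continuous fashion, and restriction to a slightly smaller solid torus is a continuous open left inverse; this transports open sets, residual sets and the count of sinks (which stay in $\mathrm{int}\,V$) between $C^d(\R^k,\mathrm{Emb}^r(V,\mathrm{int}\,V))$ and $C^d(\R^k,\mathrm{Diff}^r(M,M))$. Hence it suffices to produce an open set $\hat U_0$ of families in $C^d(\R^k,\mathrm{Emb}^r(V,\mathrm{int}\,V))$ and a residual $\mathcal R_0\subset\hat U_0$ such that every $(F_a)_a\in\mathcal R_0$ has, for each $|a|\le1$, infinitely many hyperbolic sinks in the maximal invariant set $\bigcap_{m\ge0}F_a^m(V)$.

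For this model statement I would use a \emph{suspension functor} $\Theta$. Imitating the classical realisation of expanding (Williams) attractors by diffeomorphisms, I send a family $(f_a)_a$ of $C^r$ local diffeomorphisms of $\mathbb{A}$ to a continuous family $(\Theta(f_a))_a$ in $C^d(\R^k,\mathrm{Emb}^r(V,\mathrm{int}\,V))$, fibred over $\mathbb{A}$, $r$-normally contracting, and placing the finitely many $f_a$-preimages of a point in well-separated regions of the fibre. Then $\Theta(f_a)$ is a diffeomorphism onto its image, the bundle projection semiconjugates it to $f_a$, and its attractor $\Lambda_{f_a}=\bigcap_m\Theta(f_a)^m(V)$ is homeomorphic to the inverse limit $\varprojlim(\mathbb{A},f_a)$, on which $\Theta(f_a)$ is conjugate to the shift. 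The key point is that near a periodic sink of $f_a$ the map $f_a$ is locally injective, so there $\Lambda_{f_a}$ coincides with a $C^r$ plaque on which $\Theta(f_a)$ is conjugate to $f_a$; together with the normal contraction this yields a hyperbolic periodic sink of $\Theta(f_a)$, whence $\#\mathrm{Sink}(\Theta(f_a))\ge\#\mathrm{Sink}(f_a)$.

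Since $\Lambda_{f_a}$ is a uniformly hyperbolic expanding attractor modelled on the \emph{unbranched} annulus, it is robust: there is a $C^r$-neighbourhood $\mathcal N$ of $\Theta(\hat U_{\mathbb A})$, where $(\hat U_{\mathbb A},\mathcal R_{\mathbb A})$ is given by Theorem~\ref{theo1} for $\mathbb{A}$, such that each $G\in\mathcal N$ has a hyperbolic attractor $\Lambda_G\subset\mathrm{int}\,V$; quotienting a neighbourhood of $\Lambda_G$ by the extended strong stable foliation returns $\mathbb{A}$ together with a $C^r$ local diffeomorphism $\rho(G)$, and one obtains a continuous open map $\rho\colon\mathcal N\to C^r(\mathbb{A},\mathbb{A})$ (with its $C^d$-family version) with $\rho\circ\Theta=\mathrm{id}$, carrying periodic sinks of $\rho(G)$ bijectively onto those of $G$ — again because the fibration is trivial near a sink. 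Writing the residual set as $\mathcal R_{\mathbb A}=\bigcap_NW_N$ with each $W_N$ open and dense in $\hat U_{\mathbb A}$, put $\hat U_0:=\mathcal N\cap\rho^{-1}(\hat U_{\mathbb A})$, which is open, and $\mathcal R_0:=\rho^{-1}(\mathcal R_{\mathbb A})\cap\hat U_0$. Because $\rho$ is continuous, open and satisfies $\rho(\hat U_0)=\hat U_{\mathbb A}$, each $\rho^{-1}(W_N)$ is open and dense in $\hat U_0$, so $\mathcal R_0=\bigcap_N\rho^{-1}(W_N)$ is residual; and for $(F_a)_a\in\mathcal R_0$ the family $\rho((F_a)_a)$ lies in $\mathcal R_{\mathbb A}$, so each $\rho(F_a)$, hence each $F_a$, has infinitely many hyperbolic sinks for all $|a|\le1$. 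Feeding $(\hat U_0,\mathcal R_0)$ through the reduction of the first paragraph completes the proof.

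I expect the main obstacle to be the construction of $\rho$: one must show that a $C^r$-perturbation of the solenoidal diffeomorphism $\Theta(f_a)$ still carries on its attractor the dynamics of a local diffeomorphism of $\mathbb{A}$, \emph{depending continuously and openly on the perturbation in the $C^r$-family topology}, with this identification respecting the number of sinks; this is what forces the suspension to be genuinely $r$-normally hyperbolic, so that the strong stable holonomy, and hence $\rho$, is $C^r$. An alternative avoiding $\rho$ is to re-run the proof of Theorem~\ref{theo1} directly inside $V$, replacing the surface parablender by its product with the normal contraction — an $n$-dimensional parablender for $\mathrm{Diff}^r$ in the sense of the Fundamental property of the parablenders — and verifying that the density of the sets ``at least $N$ robust sinks over $\{|a|\le1\}$'' already holds in a full open subset of $C^d(\R^k,\mathrm{Emb}^r(V,\mathrm{int}\,V))$ rather than only along the fibred locus $\Theta(\cdot)$.
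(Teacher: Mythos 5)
Your outline is the same strategy the paper follows in Section~\ref{proof:theo2}: suspend the annulus families of Theorem~\ref{main} as solenoid-like skew products $\tilde f(z,h)=(f(z),\lambda h+\rho(z))$ on $\mathbb A\times J^{n-2}$ inside a ball, use $r$-normal expansion to obtain, for every perturbation $\tilde g$, a projection $\pi_{\tilde g}$ along the strong stable foliation that semiconjugates $\tilde g$ to a surface local diffeomorphism, transfer the sinks, and then pass to $\mathrm{Diff}^r(M)$ by restriction to a ball. But the two places where you locate "the main obstacle" are precisely where the content lies, and as written they do not go through. First, you need the quotient map $\rho$ to be defined, continuous \emph{and open} on a full $C^r$-neighbourhood $\mathcal N$, with $C^r$ (indeed $C^{d,r}$) values. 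The paper's Lemma~\ref{fibration} only produces the $C^r$ projection and quotient from $C^{r+1}$-perturbations of a $C^{r+1}$ map (and $C^{r+d+1}$ in the family version); this loss of derivatives is intrinsic to extracting a $C^r$ strong-stable holonomy and is the reason Theorem~\ref{main'} excludes $r=\infty$, so "$\rho$ continuous on a $C^r$-neighbourhood" is already an over-claim. More importantly, openness of $\rho$ is never proved nor needed in the paper: instead of pulling back dense sets, the paper \emph{lifts} a given perturbation of the quotient — given $(g'_a)_a$ close to $(g_a)_a=\rho((\tilde g_a)_a)$ with the required sinks, it defines $\tau_a(z)$ by $\pi_{\tilde g_a}(\tilde g_a(z)+\tau_a(z))=g'_a\circ\pi_{\tilde g_a}(z)$ and takes $\tilde g'_a:=\tilde g_a+\tau_a$, which is $C^{d,r}$-close to $(\tilde g_a)_a$, satisfies $\pi_{\tilde g'_a}=\pi_{\tilde g_a}$ and projects exactly onto $g'_a$. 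This explicit section of $\rho$ is exactly what your openness assertion would have to deliver, and it is the step your proposal leaves unproved.

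Second, the injectivity of the suspension is not automatic in dimension $3$. Your device of "placing the finitely many $f_a$-preimages of a point in well-separated regions of the fibre" works when the fibre has dimension $\ge 2$ (the paper does it with $\rho(\theta,y)=(\cos(2\pi\theta/6),\sin(2\pi\theta/6),0,\dots,0)$, which needs $n\ge 4$); for $n=3$ the fibre is one-dimensional and the preimages of the degree-$4^{d'+1}$ covering in the $\theta$-direction cannot be separated by such a winding. The paper runs a separate argument there, using that every $f$ in the relevant open sets is injective on each vertical segment $\{\theta\}\times[-1,1]$ (Remark~\ref{rema1}) and modifying $\rho$ and an auxiliary bump only over a region $I\times[-1,1]$ which all the created sinks avoid (Remark~\ref{rema2}), so that injectivity is gained without destroying the sinks. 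Finally, a minor but real inaccuracy: it is not true that every embedding $V\to\mathrm{int}\,V$ of a solid torus extends to a diffeomorphism of $M$ (knotted re-embeddings do not); the paper avoids this by proving Theorem~\ref{main'} for $\mathrm{Diff}^r(B,B')$ with $B$ a ball, defining the open sets in $\mathrm{Diff}^r(M)$ as those diffeomorphisms whose restriction to $B$ lies in the constructed sets, and extending only the specific maps it builds (identity outside a ball, à la Smale). With the lifting step, the derivative-loss bookkeeping, and the $n=3$ injectivity argument supplied, your plan becomes the paper's proof.
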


\begin{question} 
Does Theorems \ref{theo1} and \ref{theo2} hold true for $d= \infty$ and $r=\infty$, or for $d=1$ and $r=1$?
What about the analytic case? 
\end{question}

\begin{rema} The same property holds for the sets $\hat U_\epsilon:=\{(f_{\epsilon \cdot a})_a:\; (f_a)_a\in \hat U\}$ and $\mathcal R_\epsilon:=\{(f_{\epsilon \cdot a})_a:\; (f_a)_a\in \mathcal R\}$, with $0<\epsilon <1$.

The set $U:= \{f_0; (f_a)_a\in \hat U\}$ is open, and for every $f_0\in U$,  the constant family $(f_0)_a$ belongs to the closure of $\cup_{n\ge 1} \mathcal R_{1/n}$ which is Baire residual in $\cup_{n\ge 1} \hat U_{1/n}$.
\end{rema}

\begin{Scholium}\label{Scho} For $M$, $k$, $r$ and $d$ as in Theorems \ref{theo1} and \ref{theo2}, there exists an open set  $\hat U$ in $C^d(\R^k, C^r(M,M))$ and a Baire residual set $\mathcal R$ in  $\hat U$ so that  for every $f\in U:=\{f_0:\; (f_a)_a\in \hat U\}$, the set $\mathcal R_f:= \{(f_a)_a\in \mathcal R:\; f_0=f\}$ is generic in 
$\hat U_f:= \{(f_a)_a\in \hat U:\; f_0=f\}$, and for every $0<|a|<1$, the dynamics $f_a$ carries infinitely many sinks.
\end{Scholium}
This Scholium shows that the set $\mathcal R_0\subset U$ formed by maps with infinitely many sinks is generics. Hence the set $\mathcal R:= \cup_{f\in \mathcal R_0} R_f$ is generic in $\hat U$. As the set of maps in $U$ carrying infinitely many sinks is Baire residual (see Remark \ref{BlenderNewhouse}), this Scholium implies Theorems \ref{theo1} and \ref{theo2}.

\begin{question} 
Does there exist an open set $U$ of $C^r$-diffeomorphisms, in which a typical diffeomorphisms has infinitely many sinks in the $C^d$-$k$-Kolmogorov sense?
In other words, does there exist a generic set $\mathcal R$ in $C^d(\R^k , U)$ so that for every $(f_a)_a\in \mathcal R$, the map $f_a$ has infinitely many sinks for Lebesgue almost every parameter $a$?  
\end{question}

From Newhouse works, the answer is positive for $k=0$ or $d=0$. The presented theorems do not answer to this question for $k>0$ and $d>0$ since we can imagine that there exist open sets $\hat U, \hat U'$ in $C^d([-1,1]^k, Diff^r(M))$, and Baire residual set $\mathcal R\subset \hat U, \mathcal R'\subset \hat U'$, so that:
\begin{itemize} 
\item  $\{f_0: (f_a)_a\in \hat U\}=\{f'_0: (f'_a)_a\in \hat U'\}$, 
\item for every $(f_a)_a\in \mathcal R$, for every $(f_a')\in \mathcal R'$, for Lebesgue a.e. $a$, the map $f_a$ has infinitely many sinks, whereas $f'_a$ has only finitely many sinks.
\end{itemize}

The proof of Newhouse phenomena was done by constructing a hyperbolic set $K$ which has robust homoclinic tangencies. This means that for every perturbation of the dynamics, there are two points $P,Q$ in the hyperbolic continuation of $K$ such that the stable manifold of $P$ is tangent to the unstable manifold of $Q$. The construction of Newhouse worked for  surface diffeomorphisms and in the $C^r$-topology, $r\ge 2$. These robust homoclinic tangencies are implied by a result which gives sufficient conditions for two (dynamical) Cantor sets of the real line to have stable intersections (for perturbation in the space of such dynamical Cantor sets). These conditions are based on the geometry of these Cantor sets, via a concept called thickness and afterward \cite{GuYo}, via an optimal inequality on their Hausdorff dimensions.
This construction was generalized in \cite{PaVi} to manifolds of higher dimension. 

 Another construction of Newhouse was done for $C^r$-diffeomorphisms of $n$-manifold, for $n\ge 3$ and $r\ge 1$ \cite{New80} (see also \cite{Asa08}). Latter, Bonatti-Diaz constructed an example of hyperbolic set $B$ of $\R^3$ with $C^1$-robust homoclinic tangencies, by constructing a hyperbolic set with special properties : the \emph{blender}. This implied the existence of a Baire residual set in an open set of $Diff^1(M^3)$ for which all diffeomorphisms have infinitely many sinks \cite{BD99, DNP06}.

We will prove Theorems \ref{theo1} and \ref{theo2} by introducing two notions. The first is the \emph{$C^d$-paratagency}, which is a (quadratic) tangency between two curves of $\R^2$ which is unfolded degenerately up to the order $d$. The second notion is the \emph{parablender} which produces robust homoclinic $C^d$-paratangencies. We will then use some perturbations techniques to prove the following result on local diffeomorphisms of the annulus $\mathbb A=\mathbb S^1 \times [-1,1]$.

\begin{theo}\label{main}
For all $\infty\ge r>d\ge 0$ or $\infty > r= d\ge 2$, for all $k\ge 0$, there exists a nonempty open set $U$ in $C^r (\mathbb A,\mathbb A)$ formed by local diffeomorphisms, such that $f(\mathbb A)\Subset \mathbb A$, there exists an open set  $\hat U$ in $C^d(\R^k, C^r(\mathbb A,\mathbb A))$, and a family of open dense subsets $(\hat U_N)_N$ in $\hat U$  satisfying that:
\begin{itemize}
\item  for every $(f_a)_a\in \hat{U}_N$, for every $a\in[-1,1]^k\setminus[-1/N,1/N]^k$, the map $f_a$ has $N$-sinks,
\item  for every $f\in U$, the set $\{(f_a)_a\in U_N:\; f_0=f\}$ is non-empty, open and dense in $\{(f_a)_a\in \hat U:\; f_0=f\}$.\end{itemize}\end{theo}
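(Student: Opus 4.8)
\emph{Overview and the open set $U$.}
The plan is to exhibit an explicit local model in the annulus carrying a parablender and then to run a parametrized Newhouse argument: produce the sinks one at a time, each one created by partially unfolding a robust homoclinic $C^d$-paratangency with a $C^d$-small perturbation, in such a way that the resulting periodic attractor survives for every parameter outside a prescribed cube. First I would build a local diffeomorphism $f_0$ of $\mathbb A$ with $f_0(\mathbb A)\Subset\mathbb A$ carrying a hyperbolic set $K_0$ and a hyperbolic periodic point $p_0$, arranged so that $K_0$ has the geometry needed for a parablender (large thickness, product-like structure with suitable contraction/expansion rates) and so that the local stable lamination of $K_0$ meets the local unstable manifold of $p_0$ tangentially. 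All the relevant hypotheses are $C^r$-robust, so they define a nonempty open set $U\subset C^r(\mathbb A,\mathbb A)$ of local diffeomorphisms with $f(\mathbb A)\Subset\mathbb A$. I then let $\hat U\subset C^d(\R^k,C^r(\mathbb A,\mathbb A))$ be an open set of families whose value at $a=0$ lies in $U$ and for which the hyperbolic continuation $K_a$ of $K_0$ is, by the Fundamental property of the parablenders, a parablender: robustly in the $C^d$-family topology it produces homoclinic $C^d$-paratangencies, for all parameters near $0$ simultaneously. Since this property is $C^d$-open and the remaining freedom in a family (its higher jets in $a$) can be prescribed without touching the value at $a=0$, the slice $\{(f_a)_a\in\hat U:\ f_0=f\}$ is nonempty and open–dense inside $\{(f_a)_a:\ f_0=f\}$ for every $f\in U$.

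\emph{Definition and openness of $\hat U_N$.}
Set
\[
\hat U_N=\bigl\{(f_a)_a\in\hat U:\ f_a\text{ has }\ge N\text{ hyperbolic sinks for every }a\in[-1,1]^k\setminus[-1/N,1/N]^k\bigr\}.
\]
A $C^d$-small perturbation of a family induces a $C^r$-small, hence $C^1$-small, perturbation of $f_a$ uniformly in $a$, and a hyperbolic periodic attractor persists under $C^1$-small perturbations; therefore the property defining $\hat U_N$ is open on each compact sub-piece of the parameter region, and covering $[-1,1]^k\setminus[-1/N,1/N]^k$ by finitely many such pieces shows that $\hat U_N$ is $C^d$-open, and likewise its slices over a fixed value at $a=0$.

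\emph{Density of $\hat U_N$.}
Given $(f_a)_a\in\hat U$, the sinks are created one at a time by $C^d$-small perturbations that vanish at $a=0$ and are supported in parameter away from $0$. Using that the parablender robustly carries a homoclinic $C^d$-paratangency, pick a periodic point $p$ of $K_a$ whose invariant manifolds realize such a paratangency; since it is unfolded degenerately up to order $d$, a $C^d$-small perturbation that is zero at $a=0$ can replace the (degenerate) $d$-jet in $a$ of the gap between the two curves by a chosen one. Renormalizing the first return map near the tangency of $p$ — equivalently, choosing the return depth, i.e.\ a suitable periodic point among the many furnished by the thickness of $K_0$ — yields a Hénon-like family whose renormalized parameter can thus be forced to sit in the ``sink window'' for all $a$ in a prescribed shell of $[-1,1]^k\setminus[-1/N,1/N]^k$; finitely many such shells cover the whole complement of the cube, and carrying this out $N$ times with pairwise disjoint families of periodic orbits yields at least $N$ sinks for every $a$ outside the cube. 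The parablender still carries a robust homoclinic $C^d$-paratangency afterwards, so the procedure can be iterated; and since every perturbation is localized away from $a=0$, the same argument proves density inside the slice $\{(f_a)_a\in\hat U:\ f_0=f\}$.

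\emph{Conclusion and main obstacle.}
Nonemptiness of the slices comes from the first paragraph, openness from the second, density from the third; together these are precisely the two bulleted assertions of Theorem \ref{main}, and $\mathcal R:=\bigcap_N\hat U_N$ then yields a Baire residual set of families for which $f_a$ has infinitely many sinks for all small $a\neq0$. The hard part is the density step: one must extract from the Fundamental property of the parablenders the precise perturbation-robust control it provides — that for every $C^d$-perturbation of the family and for all parameters one can prescribe the $C^d$-unfolding profile of a homoclinic paratangency of a chosen periodic orbit of $K_a$ — and then perform the bookkeeping that turns ``a sink on a parameter shell'' into ``$N$ sinks on the full complement of the cube'', uniformly enough to remain an open condition. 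Checking that the model of the first paragraph genuinely is a parablender is the other substantial ingredient, but that is precisely what the results stated earlier in the paper supply.
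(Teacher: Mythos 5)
Your overall architecture (couple the parablender with a dissipative saddle, use the robust homoclinic $C^d$-paratangency to create a sink by a $C^d$-small perturbation localized in parameter away from $a=0$, iterate to get open dense sets, and note that hyperbolic sinks give openness) is the same as the paper's, and the openness argument is fine. The genuine gap is in the density step. You assert that one $C^d$-small perturbation can force the sink to exist ``for all $a$ in a prescribed shell'' of $[-1,1]^k\setminus[-1/N,1/N]^k$, and that finitely many such shells then cover the complement of the cube. But the paratangency mechanism (Lemma \ref{robusthomoclinictangency}, Propositions \ref{sinkcreation} and \ref{quasisinkcreation}) only produces a sink on a cube $a_0+(-\alpha,\alpha)^k$ with $\alpha<\alpha_0(\mu,\theta,U,\nu)$ small, centered at the single parameter $a_0$ where the unfolding function $C'$ has vanishing $d$-jet: away from $a_0$ one no longer has $C'(a)=o(|a-a_0|^d)$, so the correction needed to keep the tangency, hence the sink, across a macroscopic shell is \emph{not} a small perturbation. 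If instead your ``shells'' are these small cubes, the patching is exactly where the difficulty lies and your sketch does not address it: the perturbation attached to a cube is supported in the doubled cube in parameter space, so either (i) its support avoids the cubes carrying previously created sinks --- but then cubes with pairwise disjoint doubled supports can never cover the shell, they fill only a definite fraction of it (this is why the paper, applying Fact \ref{pourinduction} simultaneously at the points of a lattice $2^{-N+1}(\Z^k\setminus\{0\})$, only obtains sinks on a lattice neighborhood and must invoke Borel--Cantelli to get almost every $a$); or (ii) the supports overlap, and then each new perturbation must be smaller than the persistence threshold of the earlier sinks, which is exponentially small in their (large) period $M$; since $\alpha_0\to0$ as $\mu\to0$, the subsequent sink-cubes shrink rapidly and a finite covering of the shell is not guaranteed --- the total measure covered can converge before exhausting the shell.

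The paper's resolution of this covering problem, absent from your argument, is the $2^k$-copies device: it works with $2^k$ spatially disjoint copies of the coupled parablender--saddle dynamics inside one annulus and assigns to each copy a translate $a_0+2^{-N}(i_j)_j$ of the lattice, chosen so that the union of the translated lattice neighborhoods covers all of $[-\alpha,\alpha]^k$ outside a small cube; perturbations attached to different copies act on disjoint regions of the annulus, so their parameter supports may overlap without destroying each other's sinks, while within each copy the supports stay pairwise disjoint. Without this (or an equivalent device) you obtain sinks only on a proper, lattice-like subset of the shell, not for every $a$ outside $[-1/N,1/N]^k$. Two further points you gloss over but which the paper must (and does) control: the cube radius $\alpha$ has to be uniform in $a_0$, which requires the uniform technical bounds $(\theta,U,\nu)$ of the quasi-homoclinic $C^{d}$-paratangency over the whole open set of families (Proposition \ref{quasisinkcreation}); and the case $r=d\ge 2$ needs a separate smoothing trick because the critical point $a\mapsto c_a$ is then only $C^{d-1}$.
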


\begin{rema} For every $r\ge 2$, with $d'= \max(2,d)$, the open set  $U$ (resp. $U_N$ and $\hat U$) is the intersection of open subsets of  $C^{d'} (\mathbb A,\mathbb A)$ (resp. $C^d(\R^k, C^{d'}(\mathbb A,\mathbb A))$) with $C^r(\mathbb A,\mathbb A)$ (resp. $C^d(\R^k, C^{r}(\mathbb A,\mathbb A))$).
\end{rema}

As every compact surface $M$ contains an annulus $\mathbb A$, we notice that it is possible to extend any local diffeomorphism $f$ of the annulus to the whole surface $M$. Hence Theorem \ref{main} implies Scholium \ref{Scho} and so Theorem \ref{theo1}. Nevertheless the extension cannot be a local diffeomorphism if $M$ is not the torus.  

To get Theorem \ref{theo2}, the maps of $U$  will be lifted as maps of a solenoid of the $n$-balls, $n\ge 3$.  Although this construction is simple to implement, it does not allow us to carry the case $r=\infty$ for diffeomorphisms (contrarily to Theorems \ref{theo1} and \ref{main}). However a canonical generalization of the results proved in this paper to higher dimension would carry this case.


\thanks{I thanks the referees for their advices and comments. I am very grateful to Enrique Pujals and Sylvain Crovisier for important conversations and their comments on the first version of this work. I am thankful to Jean-Christophe Yoccoz for his encouragements. This research was partially supported by the Balzan project of J. Palis, the French-Brazilian network and the project BRNUH of Universit\'e Sorbonne Paris Cit\'e.}

\section*{Plan of the proof}
We describe now the content of the rest of this paper. There are basically three steps.

\paragraph{Paratangency and Parablender}
In section \ref{Paratangencyetprelim}, we start by defining the  topologies involved and their notations. Then we recall the definition of hyperbolic sets for local diffeomorphisms, and how we define the stable and unstable manifolds of their points. Then we state Proposition \ref{cartecool4cpct} claiming that for a $C^d$-family of $C^r$-local diffeomorphisms, the family of local stable and unstable $C^r$-manifolds of a given point is also of class $C^d$. The proof of this result is given in  appendix \ref{section:proofhypprel}.

Then we introduce a key concept in the work: the notion of \emph{homoclinic $C^d$-paratangency} for a hyperbolic point $\Omega$. For families of $C^r$-diffeomorhisms $(f_a)_a$, it is a  homoclinic tangency at a certain parameter $a_0$, which unfolds degenerately up to the order $d$, although the tangency is quadratic if $r\ge 2$. For instance, if $(g_a)_{a\in\R }$ has a homoclinic tangency at $a=a_0$, then with  $f_a:=g_{a_0 +(a-a_0)^{d+1}}$, the family $(f_a)_a$ has a homoclinic $C^d$-paratangency at $a=a_0$. This concept will need $r\ge d\ge 2$ or $r>d\ge 0$ to be well defined. We will always make this assumption afterward.

In section \ref{section:parablender}, we introduce the main concept of this work : the parablenders. 
The \emph{blenders} are hyperbolic compact set well known to produce robust homoclinic tangencies. We recall their definition in the context of local diffeomorphisms of surface. They have robustly the following fundamental property: for every curve $\Gamma$ in a  $C^r$-open set, there is a local unstable manifold of the blender which is tangent to $\Gamma$.

Along the stable foliation of the blender, a toy model of the dynamics is the iterated function system generated by the two following contracting maps:
\[x\mapsto \frac 23 x+\frac13,\quad x\mapsto \frac 23 x-\frac13\]
Its limit set is $C^1$-robustly an interval close to $[-1,1]$.

Contrarily to blenders, the parablenders are only defined for families of local diffeomorphisms. They enable to produce not only robust homoclinic tangencies, but also robust homoclinic $C^d$-paratangencies. Given a family of surface local diffeomorphisms $(f_a)_a$, a \emph{parablender} is a family of blenders $(K_a)_a$ which satisfies the following fundamental property (see Prop. \ref{Ml2} and \ref{Ml2k}): for every $C^d$-family of $C^r$-curves $(\Gamma_a)_a$ in an  open set, for every $a_0$, there exists  a local unstable manifold $\Gamma^u_a$ of $K_a$, so that   $(\Gamma^u_a)_a$ and $(\Gamma_a)_a$ are paratangent at $a=a_0$: for $a$ close to $a_0$, the curves $\Gamma_a$ and $\Gamma_a^u$ are tangent up to a deformation smaller than $o(|a-a_0|^d)$. This fundamental property is shown to be robust for every perturbation of the family $(f_a)_a$ in the given example of parablender.

Along the stable foliation of the blender, a toy model of the dynamics is the iterated function system generated by the following contracting maps among $\delta= (\delta_i)_{i=1}^d \in \{-1,1\}^d$:
\[x\mapsto \frac 23 x+\frac13+\epsilon P_\delta(a),\quad x\mapsto \frac 23 x-\frac13+\epsilon P_\delta(a)\]
with  $P_\delta (X)= \sum_{i=1}^d \delta_i X^i$. There are $2^{d+1}$ contracting maps. The points of the limit set have hyperbolic continuities whose $d$-first  derivatives w.r.t. $a$  form $C^1$-robustly an open set of the $d$-jet space, for every parameter $a$ close to $0$.

\paragraph{Sink creation from homoclinic paratangency}
In section \ref{section:creasink}, we first recall that one can perturb a dissipative homoclinic tangency to create a new sink. 
This result is generalized for parameter family exhibiting a dissipative homoclinic $C^d$-paratangency: a $C^d$-perturbation of the family of $C^r$-dynamics produces a large parameter set for which there is a new sink.  
 
More precisely, Lemma \ref{robusthomoclinictangency} states that given a $C^d$-family of $C^r$-local diffeomorphisms $(g_a)_a$ having a homoclinic paratangency of a hyperbolic point $\Omega_a$, 
for every $\alpha>0$ small enough (depending on technical bounds), there exist an arbitrarily small perturbation $(g_a')_a$ of $(g_a)_a$, so that for every $\|a-a_0\|<\alpha$, the point $\Omega_a$ has a homoclinic tangency.
Note that the conclusion of this Lemma is wrong for a non-degenerate homoclinic unfolding: none perturbations would give a (locally) stationary homoclinic tangency.
Then, if $\Omega_a$ is strongly dissipative, we show for Proposition \ref{sinkcreation} that a small perturbation of $(g'_a)_a$ creates a new periodic sink for every $\|a-a_0\|<\alpha$. The main difficulty is postpone to Claim \ref{unpuit}: it shows that the perturbation of   $(g'_a)_a$ is small for the $C^d$-topology of local diffeomorphisms families.

In order to apply these results to prove Theorem \ref{main} with the parablender, we generalize Proposition \ref{sinkcreation} to  the case $C^d$-paratangency between two points of a same basic set. 
To formalize this we define the notion of \emph{quasi-homoclinic $C^d$-paratangency}. Such a quasi-homoclinic $C^d$-paratangencies are easily perturb to homoclinic $C^d$-paratangencies.  
Hence Proposition \ref{quasisinkcreation} states that given a $C^d$-family of local diffeomorphisms $(g_a)_a$ exhibiting a quasi-homoclinic $C^d$-paratangency of a (strongly) dissipative hyperbolic fixed point at $a_0$, for every $\alpha>0$ sufficiently small (depending on technical bounds), there exists an arbitrarily  small perturbation of $(g'_a)_a$ with a new periodic sink for every $\|a-a_0\|<\alpha$.

\paragraph{Lattice of parameter sets with sinks}
In section \ref{section:proofmain}, we prove Theorem \ref{main} at the neighborhood of dynamics that first appear in \cite{DNP06} in the diffeomorphisms case.
We start with a family of maps $(f_a)_a$ described in  section \ref{section:parablender}, which has a parablender.  We deform these dynamics to a family of maps $(g_a)_a$ so that $(\Gamma_a)_a$ (involved in the fundamental property of the parablender) is a segment of the stable manifold of a (strongly) dissipative hyperbolic fixed point $\Omega_a$, and also both $\Omega_a$ and the blender $K_a$ are included in a transitive hyperbolic set. Then by the fundamental property of the parablender, there exists a neighborhood $V_a$ which does not depend on  $(g_a)_a$ in an open set of $C^d$-families of $C^r$-maps, so that for every $a_0\in V_a$, the hyperbolic fixed point $(\Omega_a)_a$ has a homoclinic $C^d$-paratangency. Hence for every $\alpha>0$ small enough, Proposition  \ref{quasisinkcreation}  gives the existence for every $a_0\in V_a$, of a perturbation $(g'_a)_a$ of $(g_a)_a$, such that for every $a$ which is $\alpha$-distant to $a_0$, the map $g_a$ has a new periodic sink. 
The main difficulties of this part is to verify the technical bound hypothesis of Proposition \ref{quasisinkcreation} in order that $\alpha$ does not depend on $a$. 

Furthermore Proposition \ref{quasisinkcreation} states that $(g'_a)_a$ can be chosen equal to $g_a$ whenever $a$ is $2\alpha$-distant to $a_0$.
Hence we can apply this proposition for $a_0$ in the lattice $(4\alpha \mathbb Z^k\setminus \{0\}) \cap V_a$. By hyperbolic continuation, the new sinks created for $a$ in the $\alpha$-neighborhood of this lattice persists for an open set of local diffeomorphisms families. Hence we are able to apply this inductively for $\alpha=1/n$, with $n$ large. 
Moreover by taking $2^k$-copies of this dynamics, and we can apply this argument at a special translation of the lattice $4\alpha \mathbb Z^k\setminus \{0\}$ for each copy, so that the union of the $\alpha$-neighborhoods of these lattices contains $V_a$. This will prove  Theorem \ref{main}.

In section \ref{proof:theo2}, we prove Theorem \ref{theo2}. In order to do so,  we lift the previous construction to make the dynamics invertible, by basically using Smale solenoid construction. The construction will produce a partially hyperbolic map with a strong stable direction, and the holonomy along the strong stable manifolds will define a semi-conjugacy with a surface local diffeomorphism as in the proof of Theorem \ref{main}.

\section{Paratangencies, classical results of hyperbolic dynamics and notations}\label{Paratangencyetprelim}
\paragraph{Notation for the involved topologies}

In this work we will only work with regularity given by the integers $r>d\ge 0$. Given $n,m\ge 1$, the \emph{topology} on  $C^r(\R^n,\R^m)$ is given by the norm:
\[\|g\|_{C^r}:= \sup_{0\le i\le r, \, z\in \R^n} \|D^i_z g\|.\]
A map $g\in C^r(\R^n,\R^m)$ is bounded if its norm is finite. The space of bounded $C^r$-maps endowed with this norm is complete. 
 
The space  $C^d(\R^k, C^r(\R^n,\R^m))$ is formed by the families $(g_a)_a$ of maps $g_a\in C^r(\R^n,\R^m)$ so that the derivatives 
$\partial_ a^j  \partial_z^ig_a(z)$ are well defined for every $i\le r$ and $j\le d$ and depend continuously on $(a,z)\in\R^k\times \R^n$. The \emph{topology} on $C^d(\R^k, C^r(\R^n,\R^m))$  is given by the norm:
\[\|(g_a)_a\|_{C^{d,r}}:= \sup_{0\le i\le r, \, 0\le j\le d,\, z\in \R^n} \|\partial^j_a\partial^i_z g_a(z)\|\;.\]
A family $(g_a)_a$ is \emph{bounded} if its norm is finite. The space of bounded $C^d(\R^k, C^r(\R^n,\R^m))$-maps endowed with this norm is complete.
We define similarly the \emph{topology} on $C^d(\R^k,C^r(M,N))$ where $M$, $N$ are Riemannian manifolds.

The topology on $C^\infty (\R^n,\R^m)$ (resp. $C^d(\R^k, C^\infty (\R^n,\R^m))$ ) is the union of the topologies induced by $C^r(\R^n,\R^m)$ (resp. $C^d(\R^k, C^r (\R^n,\R^m))$ ) among $r\ge 0$.

\paragraph{Notation :}
For the sake of simplicity, we will write that \emph{$(g_a)_a$ is of class $C^{d,r}$} to say that $(g_a)_a$ is a $C^d$-family of $C^r$-functions.

The distance between two submanifolds which are the graphs of functions in  $C^r(\R^n,\R^m)$ (resp. $C^d(\R^k, C^r(\R^n,\R^m))$) is the distance between these functions given by the norm defined above.

\paragraph{Paratangency}

The following definition is useful to define the concept of homoclinic paratangency, which will be crucial in the proof of the main theorem.

\begin{defi}[$C^d$-Paratangency]
Let $r> d\ge  0$ or $r= d\ge 2$, and let $(\Gamma_a)_{a\in \R^k}$ and $(W_a)_{a\in \R^k}$ be two $C^{d}$-families  of immersed $C^{r}$-curves in $\R^2$.
We say  that they are $C^d$-\emph{paratangent} at the parameter $a=0$ and the points $P\in \Gamma_0\cap W_0$, if the following property holds.
If $r=1$ and so $d=0$, this means that $\Gamma_0$ and $W_0$ are tangent at $P$. If $r\ge 2$, this means that there exists a $C^d$-family $(\phi_a)_a$ of $C^\infty$-diffeomorphisms of $\R^2$ such that 
\begin{itemize}
\item[$(o)$] The curve $\Gamma_0$ and $ W_0$ are tangent at $P$, and the tangency is quadratic (i.e. the curvatures of $ W_0$ and $\Gamma_0$ at $P$ are different),
\item[$(i)$] $\phi_a(W_a)$ is tangent to $\Gamma_a$ at a point nearby $P$,
\item[$(ii)$] at $a=0$, $\partial_a^i \phi_a=0\in C^r(\R^2)$, for every $0\le i\le d$.
\end{itemize}
\end{defi}


\begin{rema}\label{Remprametrizationtangent} The definition of $C^d$-paratangency does not vary by coordinate change, and so might be defined for curves families of any surface.
Indeed, if $(W_a)_a$ and $(\Gamma_a)_a$ are $C^d$-paratangent at $P$, there exists a $C^d$-family of diffeomorphisms $(\phi_a)_a$ satisfying $(i)$ and $(ii)$.

Let $(\Psi_a)_a$ be a $C^d$-family of $C^r$-diffeomorphisms. To show that $(\Psi_a(W_a))_a$ and $(\Psi_a(\Gamma_a))_a$ are $C^d$-paratangent at $\Psi_a(P)$ and $a=0$, it suffices to consider the $C^d$-family of diffeomorphisms $(\phi'_a)_a$ defined by $\phi'_a:= \Psi_a\circ \phi_a\circ \Psi_a^{-1}$, which satisfies $(i)$ and $(ii)$ for the families $(\Psi_a(W_a))_a$ and $(\Psi_a(\Gamma_a))_a$ .

\end{rema}

The existence of a family $(\phi_a)_a$ of class $C^d$ might sound difficult to implement. It is actually rather easy as shown by the following fact. 

\begin{fact}\label{prametrizationtangent}
Let $\infty \ge r\ge d\ge 1$ with  $r\ge 2$, and let $(\Gamma_a)_{a\in \R^k}$ and $(W_a)_{a\in \R^k}$ be two $C^{d}$-families  of immersed $C^{r}$-curves in $\R^2$.
 By coordinate change, we assume that $P=0$ and that a  neighborhood of $P$ in $\Gamma_a$ is a segment of  $\R\times \{0\}$. 

Suppose that there exists $P\in \Gamma_0\cap W_0$ at which $\Gamma_0$ and $W_0$ are tangent but with curvatures different.
 Then there exists a unique function $a\mapsto \tau(a)$ defined on a neighborhood of $0$ so that: 
\begin{enumerate}[$(i)$]
\item $\tau$ is of the form $\tau(a)=(0,\eta(a))$ for every small $a$, 
\item $W_a+\tau(a)$ is tangent to $\Gamma_a$ nearby $P$.
\item the tangency point is a $C^{\min(d,r-1)}$-function of $a$ and is uniquely defined at the neighborhood of $P$.
\end{enumerate}
Moreover $\tau$ is of class $C^d$.

 Also $(\Gamma_a)_{a\in \R^k}$ and $(W_a)_{a\in \R^k}$ are $C^d$-paratangent if and only if $\partial_a^i\tau(a)=0$ for every $0\le i\le d$.
\end{fact}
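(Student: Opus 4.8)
The plan is to set up the tangency condition as a system of two equations in two scalar unknowns — the vertical translation parameter $\eta$ and the abscissa $x$ of the prospective tangency point — and solve it by the implicit function theorem, where the quadratic (non-degenerate) nature of the tangency at $a=0$ is exactly what makes the relevant Jacobian invertible. First I would use the normalization already granted: after the coordinate change $P=0$ and a neighborhood of $P$ in $\Gamma_a$ is the segment $\R\times\{0\}$ (note $\Gamma_a$ still moves with $a$, but near $P$ we may re-coordinatize so that locally $\Gamma_a=\R\times\{0\}$ for all small $a$; this costs only a $C^d$-family of $C^r$-diffeomorphisms, which by Remark \ref{Remprametrizationtangent} does not affect the $C^d$-paratangency notion). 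Parametrize a neighborhood of $P$ in $W_a$ as a $C^r$-curve $s\mapsto \gamma_a(s)=(u_a(s),v_a(s))$ depending $C^d$ on $a$, with $\gamma_0(0)=0$. Translating $W_a$ by $\tau=(0,\eta)$, tangency to $\R\times\{0\}$ near $P$ means: there is $s$ with $v_a(s)+\eta=0$ and $v_a'(s)=0$ (the tangent line of $W_a+\tau$ is horizontal). So the system to solve is
\begin{equation*}
F(a,s,\eta):=\bigl(v_a'(s),\,v_a(s)+\eta\bigr)=(0,0).
\end{equation*}

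Next I would check the hypotheses of the implicit function theorem at $(a,s,\eta)=(0,0,0)$. By assumption $W_0$ and $\Gamma_0=\R\times\{0\}$ are tangent at $P$, so $v_0(0)=0$ and $v_0'(0)=0$, hence $F(0,0,0)=0$. The partial derivative of $F$ with respect to $(s,\eta)$ is the matrix $\begin{pmatrix} v_0''(0) & 0\\ v_0'(0) & 1\end{pmatrix}$, whose determinant is $v_0''(0)$. Since the tangency is quadratic — the curvatures of $W_0$ and of $\Gamma_0$ at $P$ differ, and $\Gamma_0$ is a straight segment so has zero curvature — we have $v_0''(0)\neq 0$, so this matrix is invertible. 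The implicit function theorem then yields, for $a$ near $0$, unique $C^{\ell}$ functions $a\mapsto s(a)$ and $a\mapsto \eta(a)$ with $F(a,s(a),\eta(a))=0$; here $\ell=\min(d,r-1)$, because $F$ is built from $v_a$ (a $C^d$-family of $C^r$-curves) and its first $s$-derivative $v_a'$ (still $C^d$ in $a$ but only $C^{r-1}$ in $s$), so $F$ is of class $C^{\min(d,r-1)}$ jointly. Setting $\tau(a):=(0,\eta(a))$ gives $(i)$; $(ii)$ is the construction of $F$; and $(iii)$ is the regularity of $a\mapsto s(a)$ (the tangency point $\gamma_a(s(a))+\tau(a)$ depends $C^{\min(d,r-1)}$ on $a$), with uniqueness near $P$ from the local uniqueness in the implicit function theorem. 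Uniqueness of $\tau$ itself also follows: any other admissible vertical translation would give another solution of $F=0$ near $(0,0,0)$, excluded by the theorem.

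Then I would upgrade the regularity of $\tau$ from $C^{\min(d,r-1)}$ to $C^d$. The point is that $\eta(a)$ is not merely a component of the implicit-function solution; it is determined by $\eta(a)=-v_a(s(a))$, and even though $s(a)$ is only $C^{\min(d,r-1)}$, differentiating the composition $a\mapsto v_a(s(a))$ up to order $d$ in $a$ only ever hits $s(a)$ by the first $s$-derivative $v_a'$, and at the tangency point $v_a'(s(a))=0$. So all the ``dangerous'' terms that would require $v_a'$ to be $C^d$-in-$s$, i.e. $\partial_a^i$-derivatives landing on $s(a)$, come multiplied by $v_a'(s(a))=0$ (more precisely: in $\partial_a(v_a(s(a)))=(\partial_a v_a)(s(a))+v_a'(s(a))s'(a)$ the second term vanishes; one iterates, using $F$'s first component $v_a'(s(a))=0$ to kill, order by order, every chain-rule term that differentiates through $s(a)$ more than the $(\partial_a^j v_a)$ already present). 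Hence $\eta(a)$ inherits the $C^d$-regularity of $a\mapsto (\partial_a^j v_a)(\,\cdot\,)$, which is $C^d$ because $(W_a)_a$ is a $C^d$-family — actually one should argue via a Taylor-with-remainder / Faà di Bruno bookkeeping to see the $d$-th $a$-derivative of $\eta$ exists and is continuous. This regularity-bootstrapping is the main obstacle: it is the one nontrivial computation, because naively the implicit function theorem only gives $C^{\min(d,r-1)}$ and one must exploit the vanishing of $v_a'$ at the solution to recover the full $C^d$.

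Finally, the last assertion is immediate from the definition of $C^d$-paratangency via Fact \ref{prametrizationtangent} applied to $\phi_a:=\mathrm{translation\ by\ }\tau(a)$. If $\partial_a^i\tau(a)=0$ for all $0\le i\le d$ at $a=0$, then $(\phi_a)_a$ is a $C^d$-family of $C^\infty$-diffeomorphisms with $\partial_a^i\phi_a=0$ in $C^r(\R^2)$ at $a=0$ (translations by $\tau(a)$ differentiate to translations by $\partial_a^i\tau(a)=0$), and $\phi_a(W_a)=W_a+\tau(a)$ is tangent to $\Gamma_a$ near $P$ by $(ii)$; together with $(o)$ (which is the standing quadratic-tangency hypothesis) this is exactly the definition, so $(\Gamma_a)_a$ and $(W_a)_a$ are $C^d$-paratangent. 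Conversely, if they are $C^d$-paratangent via some family $(\psi_a)_a$ with $\partial_a^i\psi_a=0$ at $a=0$, then $\psi_a(W_a)$ is tangent to $\Gamma_a$ near $P$, and by the uniqueness statement the vertical translation realizing this tangency must agree with $\tau(a)$ to all orders $\le d$ at $a=0$ — since $\psi_a$ is $C^d$-close to the identity to order $d$, the induced displacement of $W_a$ is $o(|a|^d)$, forcing $\partial_a^i\tau(0)=0$ for $0\le i\le d$. I would write this direction carefully by comparing, order by order, the Taylor expansions in $a$ of the tangency point of $\psi_a(W_a)$ and of $W_a+\tau(a)$, both of which are pinned down by the same implicit equation up to the perturbation $\psi_a-\mathrm{id}$.
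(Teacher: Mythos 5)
Your proposal is correct and follows essentially the same route as the paper: locate the tangency point by the implicit function theorem (the quadratic tangency, i.e.\ $v_0''(0)\neq 0$, giving invertibility and only $C^{\min(d,r-1)}$ regularity a priori), then upgrade $\eta$ to $C^d$ by observing that the chain-rule term through the tangency point is multiplied by the vanishing first derivative $v_a'(s(a))=0$, and finally obtain the paratangency equivalence via the translation family $\phi_a=\cdot+\tau(a)$ together with the $o(\|a\|^d)$ bound on the displacement. The only cosmetic difference is that you solve a $2\times 2$ system in $(s,\eta)$ where the paper solves the one-variable critical-point equation $\partial_x\gamma_a=0$ and then sets $\eta(a)=\gamma_a(c_a)$.
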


\begin{proof}[Proof of Fact \ref{prametrizationtangent}]
Since the tangency is quadratic, 
for $a$ small, the intersection of a neighborhood of $P$ with  $\Gamma_a$ is a graph of a $C^r$-function $\gamma_a$, which satisfies $\gamma_0(P)=0$, $\partial_x \gamma_0(P)=0$ and $\partial^2_x \gamma_0(P)\not =0$.
To prove the first part of the claim, it suffices to show that with $c_a$ the critical point of $\gamma_a$ near $P$, the map  $\eta\colon a\mapsto \gamma_a(c_a)$ is of class $C^d$. To prove such a regularity, we solve the following equation  implicitly:
\[\Psi(x,a)=0, \quad\text{with }\Psi\colon \R^2\ni (a,x)\mapsto \partial_x \gamma_a(x)\in \R.\]
The implicit function theorem implies that $a\mapsto c_a$ is of class $C^{\min(d,r-1)}$, and so at least of class $C^1$. Hence we can look at the following derivative:
\[\partial_a (\gamma_a(c_a))= (\partial_a  \gamma_a)(c_a)+ (\partial_x \gamma_a)(c_a)\cdot \partial_a c_a\]
As $c_a$ is the critical point of $\gamma_a$, the smoothness of $\partial_a (\gamma_a(c_a))$ is the same as the one of $(\partial_a  \gamma_a)(c_a)$ which is $C^{\min(d,r-1,d-1)}$. As $r\ge d$, it comes that $\eta\colon a\mapsto \gamma_a(c_a)$ is of class $C^d$.



Taking $\phi_a\colon z\mapsto z+\tau(a)$, clearly  $(\Gamma_a)_{a\in \R^k}$ and $(W_a)_{a\in \R^k}$ are $C^d$-paratangent at $a=0$ if $\partial_a^i\tau(a)=0$ for every $0\le i\le d$.

Now suppose that  $(\Gamma_a)_{a\in \R^k}$ and $(W_a)_{a\in \R^k}$ are $C^d$-paratangent at $a=0$ and $P$, via a map $(\phi_a)_a$ by satisfying $(i)$ and $(ii)$. 
Hence for every $a$ small, there exists $Q_a\in \Gamma_a$ which is sent by $\phi_a^{-1}$ into $\R\times \{0\}$. We notice that $Q_a$ has its $y$-coordinate of modulus at least $\|\tau(a)\|$. Together with the $C^d$-paratangency condition, this gives $\|\tau(a)\|\le \|\phi_a\|_{C^r}\le o(\|a\|^d)$, and so $\tau(a)$ satisfies $\partial_a^i \tau(a)=0$ for every $0\le i\le d$.
\end{proof}

\paragraph{Hyperbolic dynamics preliminaries}
Let $M$ be a manifold. A subset $K\subset M$ is left \emph{invariant} by an endomorphim $f\in C^1(M,M)$  if $f(K)=K$. 

When $f$ is a diffeomorphism, the invariant compact set $K\subset M $ is \emph{hyperbolic} if there exists a $Df$-invariant splitting  $TM|K= E^s\oplus E^u$ so that $E^s $ is contracted by $Df$ and $E^u$ is expanded by $Df$:
\[\exists  \lambda<1, \; C>0,\; \forall k\in K,\;\forall n\ge 0,\;   \left\|{Df^n|E^s_k}\right\|\le C\lambda^n\text{ and }  \left\|{(Df^{n}|E^u_k)^{-1}}\right\|\le C\lambda^n.\]

When $f$ is a local diffeomorphism, we shall consider the inverse limit $\overleftarrow {K}_f$ of $K$: 
\[\overleftarrow {K}_f= \{(k_i)_i\in K^\mathbb Z:\; f(k_i)=k_{i+1},\; \forall i\in \mathbb Z\}.\]
It is a compact space for the topology induced by the product ones of $ K^\mathbb Z$. The dynamics induced by $f$ on $\overleftarrow {K}_f$ is the shift $\overleftarrow {f}$; it is invertible. Let $\pi\colon \overleftarrow {K}_f\to K$ be the zero-coordinate projection. Let $\pi^*TM$ be the bundle over $\overleftarrow {K}_f$ whose fiber at $\underline k$ is $T_{\pi(\underline k)} M$.
 The map $Df$ acts canonically on $\pi^*TM$ as the map $\overleftarrow {f}$ on the basis and as the linear map $D_{\pi(\underline k)}f$ on the fiber of $\underline k\in \overleftarrow {K}_f$.
  
The compact set $K$ (or $\overleftarrow {K}_f$) is \emph{hyperbolic} if there exists a $Df$-invariant splitting  $\pi^*TM= E^s\oplus E^u$ so that $E^s_{\underline k} $ is contracted by
 $D_{\pi(\underline k)}f$ and $E^u_{\underline k}$ is expanded by $D_{\pi(\underline k)}f$:
\[\exists  \lambda<1, \; C>0,\; \forall \underline k \in \overleftarrow K,\;\forall n\ge 0,\;   \left\|{Df^n|E^s_{\underline k}}\right\|\le C\lambda^n\text{ and }  \left\|{(Df^{n}|E^u_{\underline k})^{-1}}\right\|\le C\lambda^n.\]
Actually the definition of hyperbolicity for local diffeomorphisms is consistent with the definition of hyperbolicity for diffeomorphisms when the dynamics is invertible.
 Here is a useful structural stability result:
\begin{thm}[Przytycki]\label{Przy}
Let $\overleftarrow K$  be a hyperbolic set for a $C^1$-local diffeomorphism $f$ of $M$.
Then for every $f'$ $C^1$-close to $f$, there exists a continuous map $i_{f'}\colon \overleftarrow K_f\to M$ which is $C^0$-close to $\pi$ and so that:
\begin{itemize}
\item $i_{f'}\circ \overleftarrow f = f'\circ i_{f'}$,
\item $K_{f'}:=i_{f'}( \overleftarrow K_f)$ is hyperbolic for $f'$.
\end{itemize}
\end{thm}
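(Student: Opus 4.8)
The plan is to reduce the statement to a standard hyperbolic fixed point argument, the only point being that the reduction must be carried out over the \emph{invertible} base $\overleftarrow f\colon\overleftarrow K_f\to\overleftarrow K_f$ rather than over $K$ itself, where $f$ need not be invertible. The key observation is that $Df$, viewed as the bundle map on $\pi^*TM$ covering $\overleftarrow f$, is a genuine linear cocycle over the homeomorphism $\overleftarrow f$ of the compact metric space $\overleftarrow K_f$, invertible on each fibre since $f$ is a local diffeomorphism, and hyperbolic with respect to the splitting $\pi^*TM=E^s\oplus E^u$. After this identification the situation is formally identical to the structural stability of a hyperbolic set of a diffeomorphism, with $K$ replaced by $\overleftarrow K_f$.

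Concretely, after adapting the Riemannian metric one may take $C=1$ in the hyperbolicity estimates, and one fixes a tubular neighborhood of the zero section of $\pi^*TM$ on which each fibrewise exponential map $\exp_{\pi(\underline k)}$ is a diffeomorphism onto a neighborhood of $\pi(\underline k)$ in $M$ of uniform size (using compactness of $\overleftarrow K_f$). Let $\mathcal E$ be the Banach space of continuous sections $v=(v_{\underline k})_{\underline k\in\overleftarrow K_f}$ of $\pi^*TM$, with the sup norm; for small $v$ set $h_v(\underline k):=\exp_{\pi(\underline k)}(v_{\underline k})$, so $h_0=\pi$. Reading the conjugacy equation $i_{f'}\circ\overleftarrow f=f'\circ i_{f'}$ in these charts and writing $i_{f'}=h_v$ turns it into a fixed point equation $(I-L)v=R(v,f')$, where $L\in\mathcal L(\mathcal E)$ is given by $(Lv)_{\underline k}=D_{\pi(\overleftarrow f^{-1}\underline k)}f\cdot v_{\overleftarrow f^{-1}\underline k}$ and $R$ is continuous, $C^1$ in $v$, with $R(0,f)=0$ and $\partial_v R(0,f)=0$. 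Because the cocycle is hyperbolic, $L$ preserves the splitting $\mathcal E=\mathcal E^s\oplus\mathcal E^u$ induced by $E^s\oplus E^u$, with $\|L|\mathcal E^s\|\le\lambda<1$ and $L|\mathcal E^u$ invertible with $\|(L|\mathcal E^u)^{-1}\|\le\lambda<1$; hence $I-L$ is a bounded invertible operator. The equation $v=(I-L)^{-1}R(v,f')$ then has, for $f'$ $C^1$-close to $f$, a unique small solution $v(f')$ depending continuously on $f'$ with $v(f)=0$ (contraction mapping, or the implicit function theorem). Setting $i_{f'}:=h_{v(f')}$ gives a continuous map $\overleftarrow K_f\to M$, $C^0$-close to $\pi$ since $\|v(f')\|\to 0$, satisfying $i_{f'}\circ\overleftarrow f=f'\circ i_{f'}$.

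It remains to see that $K_{f'}:=i_{f'}(\overleftarrow K_f)$ is hyperbolic for $f'$. Here one uses the cone-field characterization of hyperbolicity: by hyperbolicity of $\overleftarrow K_f$ there are continuous cone fields $C^u,C^s$ defined on a neighborhood $V$ of $K$ in $M$ such that $Df$ maps $C^u$ strictly inside $C^u$ with uniform expansion and the local inverse branches of $f$ map $C^s$ strictly inside $C^s$ with uniform contraction. For $f'$ $C^1$-close to $f$ these same cone fields remain strictly invariant under $Df'$ (resp.\ the local inverse branches of $f'$), with uniform estimates, on the (smaller) neighborhood containing $K_{f'}$. Since the map $\underline k\mapsto\bigl(i_{f'}(\overleftarrow f^{\,n}\underline k)\bigr)_{n\in\Z}$ is a homeomorphism of $\overleftarrow K_f$ onto $\overleftarrow{(K_{f'})}_{f'}$ conjugating $\overleftarrow f$ to $\overleftarrow{f'}$, the full inverse-limit orbit structure is available along $K_{f'}$, so the cone criterion applies and produces a $Df'$-invariant splitting $\pi^*TM|\overleftarrow{(K_{f'})}_{f'}=E'^s\oplus E'^u$ with the required contraction/expansion; thus $K_{f'}$ is hyperbolic for $f'$.

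\textbf{Main obstacle.} The only genuinely nontrivial point is the bookkeeping that makes $f$ invertible again: one must phrase the whole fixed point problem on the inverse limit $\overleftarrow K_f$, where $\overleftarrow f$ and the cocycle $Df$ are invertible, rather than on $K$, and correspondingly verify that the conjugacy map is continuous and that the perturbed cone fields still detect hyperbolicity along the inverse limit of $K_{f'}$. Once this reduction is in place the argument is the classical graph-transform/hyperbolic fixed point proof, and one only needs to be careful that the operator is a well-defined self-map of a small ball in $\mathcal E$, which follows from the uniformity of the chart sizes on the compact set $\overleftarrow K_f$.
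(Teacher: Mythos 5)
Your proof is correct and, for the conjugacy part, essentially the same as the paper's: the paper "reproves" Przytycki's theorem inside the appendix proof of Proposition \ref{cartecool4cpct}, where the operator $\sigma\mapsto\bigl[\underline k\mapsto \exp_{k_0}^{-1}\circ f'\circ\exp_{k_{-1}}\sigma(\overleftarrow f^{-1}(\underline k))\bigr]$ on continuous sections of $\pi^*TM$ over $\overleftarrow K_f$ is exactly your fixed-point equation, solved by the implicit function theorem at the hyperbolic zero section; your invertibility of $I-L$ via the splitting $\mathcal E=\mathcal E^s\oplus\mathcal E^u$ is the same mechanism. Where you diverge is the second bullet: you establish hyperbolicity of $K_{f'}$ by a cone-field criterion on a neighborhood of $K$, whereas the paper continues the splitting itself, viewing $(E^s_{\underline k})_{\underline k}$ as the attracting fixed point of the induced graph-transform on sections of the Grassmannian bundle $\pi^*Gr(TM)$; the two are interchangeable, with the cone argument being slightly more robust (it yields a splitting over \emph{every} full $f'$-orbit staying near $K$, not only those in the image of $i_{f'}$), while the paper's version is what later delivers the $C^d$-dependence of the splitting on the parameter, which the cone argument alone does not give. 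One small caveat: your assertion that $\underline k\mapsto(i_{f'}(\overleftarrow f^{\,n}\underline k))_{n\in\Z}$ is a homeomorphism \emph{onto} $\overleftarrow{(K_{f'})}_{f'}$ (surjectivity onto all full orbits of $K_{f'}$) is not justified as stated and is also not needed: the cone criterion applies directly to any full $f'$-orbit contained in the neighborhood of $K$ where the cones are invariant, so you should phrase that step without the surjectivity claim.
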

This Theorem will be reproved for Proposition \ref{cartecool4cpct}.

For every $\underline k\in \overleftarrow K$ we define the unstable manifold by:
\[\overleftarrow W^u(\underline k;f)= \{\underline k'\in \overleftarrow K:\; d(\overleftarrow {f}^{-n}(\underline k), \overleftarrow {f}^{-n}(\underline k'))\to 0\}\;.\]
As $f$ is a local diffeomorphism, the set $\overleftarrow W^u(\underline k;f)$ has a canonical structure of manifold which is immersed by $\pi$, see \cite{BR13}. For every $\eta>0$, let $\overleftarrow W^u_\eta(\underline k;f)$ be the component of $\underline k$ in $\overleftarrow W^u(\underline k;f)\cap \pi^{-1}(B(\pi(k),\eta))$. For $\eta$ small enough, the manifold $\overleftarrow W^u_\eta(\underline k;f)$ is embedded by $\pi$ onto a $C^r$-submanifold $W^u_\eta(\underline k;f)$. The manifolds $(W^u_\eta(\underline k;f))_{\underline k\in \overleftarrow K}$ depend continuously on $\underline k$ for the $C^r$-topology. 

It is well known that for $\underline k, \underline k'\in \overleftarrow K$ such that $\pi(\underline k)=\pi(\underline k')$, the local unstable manifolds  
$W^u_\eta(\underline k;f)$ and $W^u_\eta(\underline k';f)$ are not necessarily equal.

Nevertheless, when $p\in K$ is a fixed point, we denote by $W^u_\eta(p;f) :=W^u_\eta(\underline p;f)$ and $W^u(p;f) :=W^u(\underline p;f)$, with $\underline p\in \overleftarrow K$ the constant sequence $(p)_{n\in \mathbb Z}$.  

For every $k\in K$ we define the stable manifold by:
\[W^s(k;f)=\{k'\in M:\; d({f}^{n}(k), {f}^{n}(k'))\to 0\}\]
As $f$ is a local diffeomorphism, the set $W^s(k;f)$ has a canonical structure of manifold which is injectively immersed in $M$. For every $\eta>0$, let $W^s_\eta(k;f)$ be the component of $k$ in $W^s(k;f)\cap B(k,\eta)$. 
For $\eta$ small enough, the manifold $W^s_\eta(k;f)$ is properly embedded. The manifolds $(W^s_\eta( k;f))_{ k\in  K}$ depend continuously on $ k\in K$ for the $C^r$-topology.

Whenever it will be convenient, for every $\underline k\in \overleftarrow K$, we will denote $W^s(\underline k;f):= W^s(\pi(\underline k);f)$.

To simplify the analysis of the tangency unfolding, it is useful to work in special coordinate given by the two following Propositions. 
\begin{prop}\label{cartecool}
Let $k,r,  d\ge 0$, let  $(\Omega_a)_a$ be a continuous family hyperbolic fixed points for $(f_a)_a\in C^d(\R^k, Diff^r(\R^n))$ respectively.
Then there exists $\eta>0$, and a $C^{d}$-family of $C^r$-charts $(\phi_a)_a$ of $\eta$-neighborhoods of $(\Omega_a)_a$ so that:
\[\phi_a(\Omega_a)=0,\; \phi_a(W^u_\eta(\Omega_a))=(-\eta,\eta)\times \{0\},\; \phi_a(W^s_\eta(\Omega_a))=\{0\}\times (-\eta,\eta).\]
\end{prop}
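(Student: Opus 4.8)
The plan is to build the chart $\phi_a$ by linearizing the stable and unstable manifolds first and then correcting for the nonlinearity of $Df_a$ along each axis, all the while keeping track of the $C^d$-dependence on the parameter $a$. First I would use the results recalled in the preliminaries (the $C^{d,r}$ regularity of local stable and unstable manifolds of a continuous family of hyperbolic fixed points, i.e. Proposition \ref{cartecool4cpct} referred to in the plan of the paper) to obtain, for $\eta$ small, $C^d$-families of $C^r$-embeddings $\gamma^u_a\colon(-\eta',\eta')\to\R^n$ and $\gamma^s_a\colon(-\eta',\eta')\to\R^n$ parametrizing $W^u_\eta(\Omega_a)$ and $W^s_\eta(\Omega_a)$ with $\gamma^u_a(0)=\gamma^s_a(0)=\Omega_a$. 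Since $\Omega_a$ is a hyperbolic fixed point, $W^u_\eta$ and $W^s_\eta$ are transverse at $\Omega_a$, so I can complete $(D_0\gamma^u_a, D_0\gamma^s_a)$ to a $C^d$-family of linear isomorphisms $L_a\colon\R^n\to T_{\Omega_a}\R^n=\R^n$ and, composing the translation by $-\Omega_a$ with $L_a^{-1}$, arrange a first chart $\psi_a$ of class $C^{d,r}$ with $\psi_a(\Omega_a)=0$ and $D_0\psi_a$ sending the unstable/stable tangent spaces to the first/last coordinate subspaces.

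Next I would straighten the manifolds themselves, not just their tangent spaces: in the chart $\psi_a$ the sets $\psi_a(W^u_\eta(\Omega_a))$ and $\psi_a(W^s_\eta(\Omega_a))$ are graphs of $C^r$-functions (of class $C^d$ in $a$) over the axes, vanishing to first order at $0$. A standard fibered change of coordinates — subtracting from each point the graphing function of the unstable manifold in the first block of coordinates, and similarly for the stable manifold in the last block — is a $C^{d,r}$-family of diffeomorphisms of a neighborhood of $0$, equal to the identity to first order at $0$, that maps $W^u_\eta(\Omega_a)$ onto a relatively open piece of $(-\eta,\eta)\times\{0\}$ and $W^s_\eta(\Omega_a)$ onto a relatively open piece of $\{0\}\times(-\eta,\eta)$. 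After rescaling by a fixed constant and shrinking $\eta$, the images are exactly $(-\eta,\eta)\times\{0\}$ and $\{0\}\times(-\eta,\eta)$, and we are done; note that the proposition as stated only asks for the manifolds to be straightened, not for $f_a$ to be linearized, so no Sternberg-type argument or non-resonance hypothesis is needed here.

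The care that has to be taken throughout — and the main obstacle — is that every construction must be performed \emph{uniformly and smoothly in $a$}: one must check that the implicit-function-theorem and graph-transform steps producing $\gamma^{u/s}_a$ deliver genuine $C^d$-families of $C^r$-objects (this is exactly the content of Proposition \ref{cartecool4cpct}, which I would invoke as a black box), that the completion of the frame $L_a$ can be done continuously-differentiably in $a$ (choosing, say, the orthogonal complement, which depends smoothly on the plane), and that composing finitely many $C^{d,r}$-families of diffeomorphisms and their inverses stays $C^{d,r}$ — the latter following from the chain and inverse-function rules applied to the norm $\|\cdot\|_{C^{d,r}}$ defined in the paper, since all maps involved are bounded with bounded inverse derivative on the relevant neighborhood. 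The size $\eta$ of the neighborhood must be chosen small enough and uniformly in $a$ so that all these local objects are defined; compactness of the parameter is not available on all of $\R^k$, but since the statement is about a fixed continuous family one works on a fixed compact parameter region or simply notes that $\eta$ may be taken to depend continuously on $a$, which suffices for the stated local conclusion.
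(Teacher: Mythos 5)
Your reduction of the chart-building to the claim that $(W^{s}_\eta(\Omega_a))_a$ and $(W^{u}_\eta(\Omega_a))_a$ are of class $C^{d,r}$ is exactly the paper's first (one-line) step, and your straightening construction (completing the frame at $\Omega_a$, subtracting the graphing functions of the two manifolds, rescaling) is unobjectionable. The genuine gap is how you obtain that $C^{d,r}$ regularity: you invoke Proposition \ref{cartecool4cpct} as a black box. In the paper the logical order is the reverse. Proposition \ref{cartecool} is the foundational statement, proved from scratch in the appendix (and in fact proved there for a Banach space $E$ in place of $\R^n$), and the proof of Proposition \ref{cartecool4cpct} explicitly reduces to and applies this Banach-space version of Proposition \ref{cartecool} to the space of sections over the inverse limit $\overleftarrow K$. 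So your argument is circular within the paper's structure. Nor can the black box be replaced by an off-the-shelf citation: the anisotropic statement actually needed (continuity of all mixed partials $\partial_a^j\partial_z^i$ for $i\le r$, $j\le d$, including the regimes $d<r$ and $d=r$ used throughout the paper) is finer than the usual joint $C^{\min(d,r)}$-type parametrized stable-manifold theorems, and establishing it is precisely the content of this proposition.

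What is missing is therefore the paper's core argument. After the preliminary normalizations, which are compatible with your plan (the implicit function theorem shows $a\mapsto\Omega_a$ is $C^d$, so one translates to $\Omega_a=0$; the stable direction is the fixed point of a contraction on the Grassmannian depending $C^d$ on $a$, so a $C^d$ linear change makes the splitting $E^s\oplus E^u$ independent of $a$), the paper runs Irwin's proof of the stable manifold theorem on the weighted sequence spaces $\mathcal E,\mathcal E^s,\mathcal E^u$, and the real work is to verify that the evaluation map $F\colon (a,(x_n)_n)\mapsto (f_a(x_n))_n$ admits continuous derivatives $\partial_a^i\partial_{\mathcal E}^j F$ for all $i\le d$, $j\le r$ (via the integral-remainder estimate, using $|x_n|\to 0$ so that the relevant multilinear maps are uniformly bounded), so that the implicit function theorem applied to the fixed-point equation yields the graph maps $g_a$ with the required mixed regularity in $(a,x^s)$. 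None of this appears in your proposal; as written, it assumes the proposition's main content rather than proving it.
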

We will prove this proposition in appendix \ref{Propositioncartecoolproof}.

Let $K$ be a hyperbolic set for a $C^r$-local diffeomorphism $f$, with $M$ a compact manifold. Let $(f_a)_a$ be a continuous family containing $f_a$, small enough so that for every $a$, the map $i_{f_a}\in  C^0(\overleftarrow K, M)$, given by Theorem \ref{Przy} is well defined. For $\underline k\in \overleftarrow K$ and $a$, we put $\underline k_a:= i_{f_a}(\underline k)$.

\begin{prop}\label{cartecool4cpct}
Let $ r\ge 1$, $k, d\ge 0$, let $M$ be a manifold. Suppose that $(f_a)_a$ is a $C^d$-family of $C^r$-local diffeomorphisms $f_a$ which leaves invariant a compact subset $K_{f_a}$. 
Then there exists $\eta>0$, and for every $\underline k\in \overleftarrow K_{f_0}$, a $C^{
d, r}$-family of charts $(\phi_a(\underline k))_a$ of $\eta$-neighborhoods of $(\underline k_a)_a$ so that:
\[\phi_a(\underline k_a)=0,\; \phi_a(W^u_\eta(\underline k_a; f_a))=(-\eta,\eta)\times \{0\},\; \phi_a(W^
s_\eta(\underline k; f_a))=\{0\}\times (-\eta,\eta).\]
Moreover the families $(\phi_a(\underline k))_a$ depend continuously on $\underline k\in \overleftarrow K_{f_0}$.
\end{prop}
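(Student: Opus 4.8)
The plan is to build the charts $(\phi_a(\underline k))_a$ out of a few ingredients, each produced in a $C^{d,r}$ fashion and depending continuously on $\underline k$ — the continuation $a\mapsto\underline k_a$, the hyperbolic splitting $E^u_{\underline k_a}\oplus E^s_{\underline k_a}$, and the local invariant manifolds $W^u_\eta(\underline k_a;f_a)$ and $W^s_\eta(\underline k;f_a)$ — and then to glue them by an explicit local-product formula. First I would pass to a Euclidean model: cover the compact set $K_{f_0}$ by finitely many $C^r$-charts of $M$ and read $(f_a)_a$, together with its finitely many first iterates, in these charts; by the $C^0$-closeness in Theorem~\ref{Przy} the $\overleftarrow f$-orbit of $\underline k_a$ stays in their domains for $a$ small. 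It then suffices, for a uniform $\eta>0$ and continuously in $\underline k\in\overleftarrow K_{f_0}$, to build a $C^{d,r}$-family of $C^r$-charts of $B(\pi(\underline k_a),\eta)\subset\R^n$ with the stated normalizations, the meaning of ``$C^{d,r}$-family of charts of the moving $\eta$-neighbourhoods of $(\underline k_a)_a$'' being that $(a,z)\mapsto\phi_a(\underline k)(z)$ is $C^{d,r}$ on the tube $\bigcup_a\{a\}\times B(\pi(\underline k_a),\eta)$, which forces $a\mapsto\underline k_a$ to be $C^d$ — a fact that will come out of the construction.

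The heart of the matter is a parametrized invariant-section argument, carried out in detail in appendix~\ref{section:proofhypprel}. One regards a candidate family of local unstable manifolds as a section, over $\overleftarrow K_{f_0}$, of a bundle whose fibre at $\underline k$ is a space of germs at the origin of Lipschitz graphs $x^s=g_a(x^u)$ over $E^u$, with small Lipschitz constant and with $a$ ranging over a small ball, read in a continuous choice of frames along the orbit. The graph transform $\mathcal G$ — push forward by $f_a$, reparametrize over the unstable directions — is well defined there and is, uniformly in $(a,\underline k)$, a contraction, since at each step the Lipschitz constant gets multiplied by $\|Df_a|E^s\|\cdot\|(Df_a|E^u)^{-1}\|<1$; its $0$-jet instance is exactly Theorem~\ref{Przy}, with fixed section $\underline k\mapsto(\underline k_a)_a$, which is therefore $C^d$ in $a$. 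To upgrade this fixed section from continuous to $C^{d,r}$ one applies the fibred contraction theorem of Hirsch--Pugh--Shub to the enhanced bundle of mixed jets $\partial_a^j\partial_{x^u}^i$, $0\le i\le r$, $0\le j\le d$: differentiating $\mathcal G$, the $(i,j)$-jet of the new graph is a known expression, polynomial in the lower jets and in the $C^{d,r}$-data $\partial_a^{j'}\partial_x^{i'}f_a$, plus a leading linear term of operator norm at most $\|Df_a|E^s\|\cdot\|(Df_a|E^u)^{-1}\|^{\,i}<1$; the parameter direction is neutral for the dynamics, hence costs nothing, while the unstable expansion keeps the fibred contraction below $1$ for every $i\le r$, so no spectral gap beyond plain hyperbolicity is needed. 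This yields the $C^{d,r}$-family $(W^u_\eta(\underline k_a;f_a))_a$, continuous in $\underline k$, together with its tangent space at $\underline k_a$. The splitting $E^u_{\underline k_a}\oplus E^s_{\underline k_a}$ follows from the same fibred contraction (on the Grassmannian bundle, or read off from the above tangent space and its stable counterpart), and the local stable manifolds $W^s_\eta(\underline k;f_a)$ from the analogous forward graph transform over $E^s$ — legitimate because $f_a$ is a local diffeomorphism along the relevant orbits.

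Finally I would assemble the chart. Using the $C^{d,r}$-continuations of $\underline k_a$ and of the splitting $T_{\underline k_a}W^u_\eta\oplus T_{\underline k_a}W^s_\eta$, pick a $C^{d,r}$-family $\psi_a(\underline k)$ of $C^r$-charts centred at $\underline k_a$ and sending these two subspaces to $\R^{n_u}\times\{0\}$ and $\{0\}\times\R^{n_s}$. In these coordinates $W^u_\eta$ is the graph of a $C^{d,r}$-family $g^u_a$ and $W^s_\eta$ the graph of $g^s_a$, each vanishing with its first derivative at $0$; set $\Psi_a(u,s):=(u+g^s_a(s),\,s+g^u_a(u))$, so that $D\Psi_a(0)=\mathrm{Id}$ and, by the inverse function theorem with parameters (which preserves the class $C^{d,r}$), $\Psi_a$ is a $C^{d,r}$-family of $C^r$-diffeomorphisms on a uniform ball mapping $\R^{n_u}\times\{0\}$ onto $W^u_\eta$ and $\{0\}\times\R^{n_s}$ onto $W^s_\eta$. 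Then $\phi_a(\underline k):=\Psi_a^{-1}\circ\psi_a(\underline k)$ satisfies the conclusion, after $\eta$ is shrunk to a value uniform over the compact set $\overleftarrow K_{f_0}$, and continuity in $\underline k$ is preserved at every step. The main obstacle is the jet-bundle bookkeeping behind the sharp $C^{d,r}$ regularity — checking that the fibred contraction stays below $1$ on all required mixed jets and that the inhomogeneous terms carry the full $C^{d,r}$-smoothness of $(f_a)_a$, rather than only the naive $C^{\min(d,r)}$ — which is why that step is deferred to the appendix.
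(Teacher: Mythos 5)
Your proposal is correct in outline, but it takes a genuinely different technical route from the paper. You prove the $C^{d,r}$ regularity of the families of local invariant manifolds by a parametrized graph transform over $\overleftarrow K_{f_0}$, upgraded to mixed jets $\partial_a^j\partial_{x^u}^i$ via the Hirsch--Pugh--Shub fibred contraction theorem, with the (correct) observation that since the parameter direction is exactly invariant, the contraction factor on every mixed jet is essentially $\|Df_a|E^s\|\cdot\|(Df_a|E^u)^{-1}\|^{i}<1$, so plain hyperbolicity suffices and no bunching condition is needed. The paper instead follows Irwin's method as written up in \cite{Yoyointro}: it first proves Proposition \ref{cartecool} for a hyperbolic fixed point of a $C^{d}$-family of $C^r$-diffeomorphisms of a Banach space, by realizing the local stable manifold as the fixed point of a contraction on weighted sequence spaces and checking that the substitution operator $F\colon(a,(x_n)_n)\mapsto(f_a(x_n))_n$ has continuous mixed derivatives $\partial_a^i\partial_{\mathcal E}^j$ for $i\le d$, $j\le r$ (via a Taylor expansion with integral remainder); it then reduces the compact hyperbolic set case to this Banach-space statement by working on the space of continuous sections of $\pi^*TM$ over $\overleftarrow K$, which simultaneously re-proves Theorem \ref{Przy} with $C^d$ dependence of $a\mapsto \underline k_a$ (implicit function theorem) and, after a Grassmannian contraction making $E^s,E^u$ independent of $a$, yields the $C^{d,r}$ families of graphs $g_a(\underline k)$, continuous in $\underline k$. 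What each approach buys: the sequence-space route concentrates all the regularity bookkeeping in one differentiability check for a single substitution operator and gets continuity in $\underline k$ for free from the section-space formulation, whereas your route is more geometric and makes the absence of any spectral-gap hypothesis transparent, at the cost of the jet-bundle bookkeeping and the standard (but nontrivial) identification of the formal invariant jets with actual derivatives — which, as you note, is exactly the part that must be written out carefully. Your final assembly of the chart from the two straightened graphs via $\Psi_a(u,s)=(u+g^s_a(s),\,s+g^u_a(u))$ and the parametrized inverse function theorem matches the paper's (terse) concluding step.
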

We will prove this proposition in appendix \ref{Propositioncartecoolproof}.

The following Lemma is a generalization of the classical inclination Lemma (for a given diffeomorphism) to the context of $C^d$-families of local-Diffeomorphisms. 
\begin{lemm}[Inclination Lemma]\label{inclination}
Let $ r\ge 1$, $k, d\ge 0$, let $U\Subset \R^m$. Suppose that $(f_a)_a$ is a $C^d$-family of $C^r$-local diffeomorphisms $f_a$ of $U$ which leaves invariant a compact subset $K_{f_a}$.  Let $\underline k\in \overleftarrow K_{f_0}$ and $(\Gamma_a)_a$ a $C^{d,r}$-family of manifolds of the same dimension as $W^u_{loc}(\underline k; f_a)$ and so that 
$(\Gamma_a)_a$ intersects transversally $(W^s_{loc}(\underline k; f_a))_a$ at a $C^d$-curves of points $(z_a)_a$.
Then for every $\epsilon>0$, for every $n$ large, the $\epsilon$-neighborhood of $f^n_a(z_a)$ in $f^n(\Gamma_a)$ is a submanifold $\Gamma_a^n$ which is $C^r$-close to $W^u_{\epsilon}(\overleftarrow f_0^n(\underline k); f_a)$, and $(\Gamma_a^n)_a$ is $C^{d,r}$-close to $(W^u_{\epsilon}(\overleftarrow f_0^n(\underline k); f_a))_a$.
\end{lemm}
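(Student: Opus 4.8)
The plan is to reduce the $C^{d,r}$-family inclination lemma to the classical inclination lemma applied parameter-by-parameter, and then upgrade the $C^0$-in-$a$ convergence to $C^d$-in-$a$ convergence by exploiting the explicit graph-transform structure of the argument together with the chart families provided by Proposition \ref{cartecool4cpct}. First I would fix the adapted $C^{d,r}$-charts $(\phi_a(\underline k))_a$ around the orbit $(\overleftarrow f_0^{\,n}(\underline k))_a$ supplied by Proposition \ref{cartecool4cpct}, so that in these coordinates the local unstable manifolds are horizontal, the local stable manifolds are vertical, and (after shrinking $\eta$) the dynamics $f_a$ is uniformly hyperbolic with a cone field that is preserved and strictly contracted/expanded; all bounds here are uniform in $a$ because the family is $C^d$ hence $C^0$ in $a$ with compact parameter domain, and the hyperbolicity constants $\lambda<1$, $C>0$ of $K_{f_0}$ persist by Theorem \ref{Przy} for $f_a$ with $a$ small. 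In these coordinates $\Gamma_a$ is, near the transverse intersection point $z_a$, the graph of a $C^r$-map $\psi_a$ over (a neighborhood of the origin in) the horizontal space, and $\psi_a$ depends $C^d$ on $a$ because $(\Gamma_a)_a$ is a $C^{d,r}$-family meeting the $C^{d,r}$-family $(W^s_{loc})_a$ transversally along the $C^d$-curve $(z_a)_a$.

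Next I would run the graph transform: iterating $f_a^n$ sends the graph of $\psi_a$ to the graph of a map $T_a^n\psi_a$ over an expanding domain that eventually covers an $\epsilon$-ball, and the contraction-in-slope estimate of the classical inclination lemma gives, for each fixed $a$, that $\|T_a^n\psi_a - w_a^n\|_{C^r} \le C'\lambda^n$ where $w_a^n$ is the function whose graph is $W^u_\epsilon(\overleftarrow f_0^{\,n}(\underline k);f_a)$ — this is exactly the classical statement, and it is exactly what the hypotheses let us invoke. The only new point is to differentiate this convergence up to order $d$ in $a$. For this I would write the graph transform as a fixed-point/iteration scheme $\psi \mapsto \mathcal F_a(\psi)$ whose dependence on $a$ is $C^d$ (it is built from $f_a$, which is $C^{d}$ in $a$ with $C^r$ fibers, and from the $C^{d,r}$ charts), note that $\mathcal F_a$ is a uniform contraction on the relevant space of $C^r$-graphs with slope in the stable cone, and then apply the standard principle that a uniformly contracting family of $C^d$-in-parameter maps has a fixed point that is $C^d$ in the parameter, with the $a$-derivatives of the iterates converging geometrically to those of the limit. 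Concretely, differentiating the relation $\psi_a^{n+1} = \mathcal F_a(\psi_a^n)$ in $a$ up to order $d$ yields, via the Faà di Bruno / chain-rule bookkeeping, a linear recursion for $\partial_a^j\psi_a^n$ ($j\le d$) whose homogeneous part is contracted by the same rate $\lambda$ coming from $D\mathcal F_a$ acting on $C^r$-graphs, and whose inhomogeneous part is controlled by the already-established lower-order convergence; an induction on $j$ from $0$ to $d$ then gives $\|\partial_a^j(\psi_a^n - w_a^n)\|_{C^r}\to 0$ uniformly, i.e. $C^{d,r}$-convergence. Undoing the charts (the chart change is a $C^{d,r}$-family of $C^r$-diffeomorphisms, so composition preserves $C^{d,r}$-closeness by the chain rule) yields the statement for $(\Gamma_a^n)_a$ and $(W^u_\epsilon(\overleftarrow f_0^{\,n}(\underline k);f_a))_a$.

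The main obstacle I anticipate is bookkeeping the $a$-derivatives through the graph transform: one must make precise the space of $C^{d,r}$-graphs on which $\mathcal F_a$ acts, check that $\mathcal F_a$ genuinely contracts in the $C^r$-fiber norm \emph{uniformly in $a$} (this uses the cone-field/hyperbolicity estimates in the adapted charts, and the expansion in the unstable direction to handle the growing domain), and verify that the higher $a$-derivatives of $\mathcal F_a(\psi)$ depend on $\partial_a^j\psi$ only through terms that are either multiplied by the contraction factor or already controlled — in other words that there is no loss of derivatives in $a$ and no secular growth. A clean way to organize this is to observe that $W^u_\eta$ is precisely the $\mathcal F_a$-fixed graph (by uniqueness of the hyperbolic continuation, Theorem \ref{Przy}), so $w_a^n = \mathcal F_a(w_a^{n-1})$ as well, and then estimate the difference $\psi_a^n - w_a^n = \mathcal F_a(\psi_a^{n-1}) - \mathcal F_a(w_a^{n-1})$ and its $a$-derivatives directly by the mean value inequality in the $\psi$-variable plus the induction hypothesis; this avoids ever solving the linear recursion explicitly and keeps the estimates transparent. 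A secondary, more routine point is the initialization: that $\psi_a$ is $C^d$ in $a$ and that $z_a$ (hence the base point of the graph) is $C^d$ in $a$, which follows from transversality of the $C^{d,r}$-families $(\Gamma_a)_a$ and $(W^s_{loc}(\underline k;f_a))_a$ together with the implicit function theorem with parameters.
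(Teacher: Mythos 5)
Your plan is not the paper's argument, and its key analytic step has a real gap. The crux you label as ``no loss of derivatives in $a$ and no secular growth'' is precisely what fails for the naive parameterized graph transform. Write the transform as $\mathcal F_a(\psi)(x')=f^s_a\bigl(u,\psi(u)\bigr)$ with $u=u_a(x')$ the $a$-dependent base preimage. Differentiating once in $a$ produces the term $\partial_x\psi(u)\,\partial_a u_a(x')$, so $\partial_a\partial_{x'}^r$ of the output involves $\partial_x^{r+1}\psi$ of the input; more generally $\partial_a^{j}\partial_{x'}^{i}$ of the output involves spatial derivatives of $\psi$ of order up to $i+j$. These lie outside the class $C^{d,r}$ (the ``box'' $\{i\le r,\ j\le d\}$) in which your induction lives: a family that is $C^d$ in $a$ with $C^r$ fibers is \emph{not} stable under composition with an $a$-dependent inner map (take $\psi$ of class $C^r$ but not $C^{r+1}$, independent of $a$, and $u_a(x')=x'-a$: then $\partial_a\partial_{x'}^r\,\psi(x'-a)=-\psi^{(r+1)}$, which need not exist). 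Hence the recursion for $\partial_a^j\psi_a^n$ is not closed by ``already-established lower-order convergence'': at $a$-order $j$ it calls on spatial derivatives of order $r+1,\dots,r+j$ of lower-$a$-order data, which are neither controlled nor even defined under the hypotheses. Your scheme would go through if one assumed joint regularity such as $(a,z)\mapsto f_a(z)$ of class $C^{r+d}$, but that is not what the lemma asserts.

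The paper avoids this entirely and its proof is essentially two lines: since unstable manifolds of local diffeomorphisms may overlap, one extends $(f_a)_a$ to a larger domain containing an auxiliary hyperbolic point $H_a$ so that $\Gamma_a$ becomes a slice of $W^u(H_a;f_a)$, with the preimages $(z^i_a)_{i\le-1}$ of $z_a$ along $W^u(H_a;f_a)$ kept away from $W^u_{loc}(K_{f_a};f_a)$; then $\hat K_{f_a}:=K_{f_a}\cup\{z^i_a\}_{i<0}\cup\{f^n_a(z_a)\}_{n\ge0}$ is a hyperbolic compact set, the localized iterate $\Gamma^n_a$ is a local unstable manifold of the point $z^n_a$ of this enlarged set, and the $C^{d,r}$-regularity and closeness follow from Proposition \ref{cartecool4cpct} together with the continuous dependence of the local manifolds on the point of the inverse limit. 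The reason this works where your bookkeeping does not is visible in the appendix proof of Proposition \ref{cartecool4cpct}: it follows Irwin's method in sequence spaces, where the unknown is the orbit $(x_n)_n$ rather than a graph, so the fixed-point operator never composes the unknown with an $a$-dependent change of variable, and the mixed derivatives $\partial_a^j\partial^i$ with $j\le d$, $i\le r$ of the operator (hence of the fixed point, via the implicit function theorem) exist exactly under the $C^{d,r}$ hypothesis. If you want to keep a graph-transform proof, you must either strengthen the regularity hypothesis as above or recast the fixed-point problem in Irwin's form; otherwise the reduction to Proposition \ref{cartecool4cpct} via the enlarged hyperbolic set is the efficient route.
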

\begin{proof}
We recall that for local diffeomorphisms, the unstable manifolds may overlap. Hence we can extend $f_a$ to a larger open set $U'\supset U$ containing a hyperbolic point $H_a$ so that $\Gamma_a$ is a slice of an unstable manifold $W^u(H_a; f_a)$ and so that the preimages $(z^i_a)_{i\le -1}$ of $z_a$ along $W^u(H_a; f_a)$ are away of $W^u_{loc}(K_{f_a}; f_a)$. 

Then we consider the compact set $\hat K_{f_a}:= K_{f_a}\cup \{z^i_a:\; {n<0}\}\cup \{f_a^n(z_a):\; n\ge 0\}$. We notice that  $\hat K_{f_a}$ is a hyperbolic compact set. Also  
$\Gamma_a^n= W^s_\epsilon (z^n_a; f_a)$. Thus the Lemma follows from Proposition \ref{cartecool4cpct}.
\end{proof}

\paragraph{Homoclinic paratangency}
Let $f$ be a local $C^r$-diffeomorphism, $r\ge 1$, of an open set $U$ of $\R^2$. A hyperbolic fixed point $\Omega$ has a \emph{homoclinic tangency} if its unstable manifold $W^s(\Omega)$ and $W^u(\Omega)$ are tangent at a certain point $Q$. When $r\ge 2$, the tangency is \emph{quadratic} if they are tangent ($T_QW^s(\Omega;f)=T_QW^u(\Omega;f)$) but their curvatures at $Q$ are not equal ($T^2_QW^s(\Omega;f)\not=T^2_QW^u(\Omega;f)$).

It is well known that given a dissipative homoclinic tangency, a perturbation of the dynamics produces a new periodic sink (see Prop. \ref{pointapuit}). As Theorem \ref{main} deals with parameter families, we shall regard the families formed by local stable and unstable manifolds of a hyperbolic fixed point. Instead of looking for a mere homoclinic tangency at a certain parameter, we will look for a \emph{homoclinic paratangency}. For they imply the existence of a perturbation of the family having a new periodic sink which persists for a large open parameter set (see Prop. \ref{sinkcreation}).

Let $\infty \ge r> d\ge 0$ or $r= d\ge 2$, let $M$ be a surface and let  $(\Omega_a)_a$ be a continuous family hyperbolic fixed points for a $C^{d}$-family $(f_a)_a$ of $C^r$-local diffeomorphisms. 
By Proposition  \ref{cartecool}, there exists $\eta>0$ so that the families $(W^s_\eta(\Omega_a; f_a))_a$ and  $(W^u_\eta(\Omega_a; f_a))_a$ are of class $C^{d,r}$. By looking at their images by iterates of the dynamics, it comes that $(W^s(\Omega_a; f_a))_a$ and  $(W^u(\Omega_a; f_a))_a$ are of class $C^{d,r}$.
\begin{defi}[Homoclinic paratangency] The family of hyperbolic points $(\Omega_a)_a$ has a \emph{homoclinic $C^d$-paratangency} at $a=0$ and $Q_0\in W^s(\Omega_0; f_0)\cap W^u(\Omega_0; f_0)$ if $(W^s(\Omega_a; f_a))_a$ and  $(W^u(\Omega_a; f_a))_a$ are $C^d$-paratangent at $a=0$ and $Q_0$. In particular, if $r\ge 2$, there is a quadratic homoclinic tangency at $a=0$ and $Q_0$.
\end{defi}

The concept of homoclinic paratangency is  never generic for $d\ge 1$, since in contradiction with the notion of non-degenerate unfolding which is generic. Let us compare the two definitions. 

Let $r> d\ge 1$ or $r\ge d\ge 2$, let $M$ be a surface and let  $(\Omega_a)_a$ be a continuous family of hyperbolic fixed points for a $C^{d}$-family $(f_a)_a$ of $C^r$-local diffeomorphisms. By Proposition \ref{cartecool}, seen via a $C^{d,r}$-family of $C^r$-charts on  $D:= (-\eta,\eta)^2$, we can assume that for every $a$:
\[\Omega_a=0,\quad W^u_\eta(\Omega_a)=(-\eta,\eta)\times \{0\},\quad \text{and}\quad W^s_\eta(\Omega_a)=\{0\}\times (-\eta,\eta).\]

The point $\Omega_0$ has a quadratic homoclinic tangency if an iterate $f^N_0$ sends a point $P_0=(p_0,0)\in D$ to a point $Q_0=(0,q_0)\in D$,
and for every $a$ small, and there exists a neighborhood $D_P$ of $P$ in $D$ sent by $f_0^N$ into $D$, so that  $f_0^N|D_P$ has the following form:
\[ (p_0+x, y)\in D_P\mapsto  (0, q_0)+ (A_0(x, y), B_0(x, y))\in D\;,\]      
where $(A_0,B_0)$ is of class $C^r$ and satisfies 
\begin{equation} \tag{$\mathcal T$} 0=A_0(0)=B_0(0)=\partial_x A_0(0) \quad\text{and} \quad \partial^2_x A_0(0)\not=0\;.\end{equation}

We can keep the same form for $f_a|D_P$ with a $C^{d,r}$-family $(A_a,B_a)_a$:
\[ (p_0+x, y)\in D_P\mapsto  (0, q_0)+ (A_a(x, y), B_a(x, y))\in D\; .\]  

When $a$ varies, by $(\mathcal T)$ and Fact \ref{prametrizationtangent}, there exists a unique critical point $c_a$ of $x\mapsto A_a(x,0)$ nearby $0$, for every small $a$. In particular, it holds $c_0=0$. Moreover, the function $a\mapsto c_a$ is of class $C^{\min(d,r-1)}$, and the map $C\colon a\mapsto A_a(c_a,0)$ is of class $C^r$.  Hence it holds:
\begin{equation} \tag{$\mathcal T'$} \partial_x A_a(c_a,0)=0\; .\end{equation}

 %

The family of hyperbolic points $(\Omega_a)_a$ has a \emph{homoclinic $C^d$-paratangency} at $a=0$ (and $P$) if it satisfies $(\mathcal T)-(\mathcal T')$ and:
 \begin{equation} \tag{$\mathcal P$} 0=C(0)=\partial_a C(0)=\cdots = \partial_a^d C(0)\;.\end{equation}

The concept of homoclinic $C^d$-paratangency is dramatically  different to the one of \emph{non-degenerate homoclinic unfolding}: the non-degeneracy asks   $\partial_a C(0)\not=0$ instead of $(\mathcal P)$.

\section{Parablenders}\label{section:parablender}
\subsection{Blenders}
Blenders were first built in \cite{BD96} to construct $C^1$-robustly transitive diffeomorphism of $3$-manifolds which are homotopic to the identity.

A blender is a hyperbolic set with a strong stable direction. Projecting along the strong stable direction, we obtain a hyperbolic set for a surface local diffeomorphism that we shall called \emph{Blenders} as well.

Let us give a  paradigmatic example of a local diffeomorphism with a blender. Let:
\[f\colon (x,y)\in ([-1,-1/2]\sqcup [1/2,1])\times[-3/2,3/2]\mapsto \left\{\begin{array}{cl}
 (4 x+3, 2y/3-1/3)& x<0\\
  (4 x-3, 2y/3+1/3)& x>0\end{array}\right.\]   
	
	\begin{figure}[h]
    \centering
        \includegraphics[width=6cm]{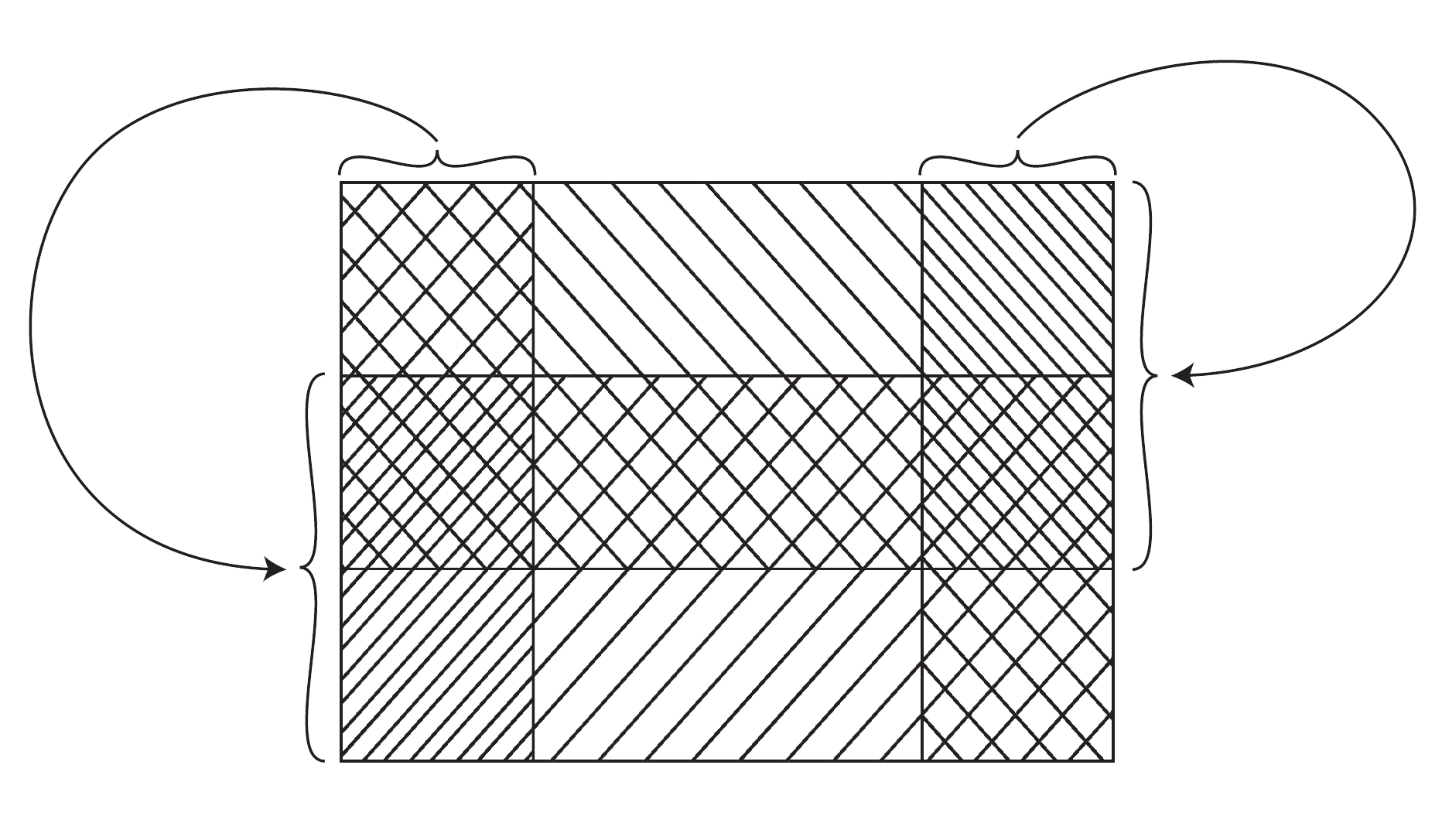}
    \caption{A paradigmatic example of a blender for a surface local diffeomorphism.}
    \label{preuvemain}
\end{figure}
	
The hyperbolic set $K:=\cap_{n\in \mathbb Z} f^n( ([-1,-1/2]\sqcup [1/2,1])\times[-3/2,3/2])$ is a \emph{blender}. We notice that the set 
$W^u_{loc}(K;f):= \cap_{n\ge 1} f^n( ([-1,-1/2]\sqcup [1/2,1])\times[-3/2,3/2])$ is equal to $[-1,1]^2$. It is actually the union of local stable manifolds $W^u_\loc(\underline k; f)$ which are of the form $[-1,1]\times \{y(\underline k)\}$, with $y(\underline k)\in [-1,1]$. For $f'$ $C^1$-close to $f$, let  $W^u_\loc(\underline k; f')$ be the curve close to $[-1,1]\times \{y(\underline k)\}$ given by hyperbolic continuation. 

By embedding $[-1,1]$ canonically into the circle $\mathbb S^1:= \mathbb R/6\mathbb Z$, the map $ f$ can be extended to a $4-1$-$C^\infty$-covering  of the annulus $\mathbb S^1 \times \R$ as follows:
\[f\colon (x,y)\in \mathbb R/6\Z \times \R\mapsto (4x -3, 2y/3+\rho(x)/3)\in \mathbb R/6\Z \times \R,\]
with $\rho\in C^\infty(\mathbb S^1, [-1,1]) $ equal to $1$ on $[1/2,1]$ and $-1$ on $[-1,-1/2]$.

Here is a fundamental property\footnote{The original fundamental property of blenders is (more) equivalent to the following:
 There exists a neighborhood $V_f$ of $f\in C^1(\R^2,\R^2)$ so that every segment of the form $\{0\}\times[y_1,y_2]$, with $-1/2<y_1<y_2<1/2$, intersects the unstable manifold of the hyperbolic continuation of the fixed point $(1,-1)$.} of this blender:
\begin{prop}\label{fondablender}
There exists a neighborhood $U_0$ of $f\in C^1(\R^2,\R^2)$ and a neighborhood $V_\Gamma$ of the curve $\Gamma=\{(x,2x^2):x\in [-1,1]\}$ in the $C^1$-topology, so that for every $f'\in V_f$, every $\Gamma'\in V_\Gamma$, there exists $\underline k\in \overleftarrow K$ so that $W^u_\loc(\underline k; f')$ is tangent to $\Gamma'$.\end{prop}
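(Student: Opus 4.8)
The plan is to reduce Proposition~\ref{fondablender}, by hyperbolic continuation, to the vanishing of a continuous ``tangency defect'' on the inverse limit $\overleftarrow K$, and then to obtain that vanishing from a self-similarity of the defect combined with the robust covering-with-overlap which is exactly what makes $K$ a blender. Normalizing so that $f$ is the paradigmatic map above, I shrink $V_f$ and apply Przytycki's Theorem~\ref{Przy} (or Proposition~\ref{cartecool4cpct}): for $f'\in V_f$ and $\underline k\in\overleftarrow K$, the leaf $W^u_{loc}(\underline k;f')$ is the graph of a function $w_{\underline k}\colon[-1,1]\to\R$ which is $C^1$-close to the constant $y(\underline k)\in[-1,1]$, the map $\underline k\mapsto w_{\underline k}$ is continuous in $C^1$, and the \emph{exact} relation $W^u_{loc}(\underline k;f')=f'\!\big(W^u_{loc}(\sigma^{-1}\underline k;f')\cap S_{k_{-1}}\big)$ holds, $\sigma$ being the shift and $S_-=[-1,-1/2]\times[-3/2,3/2]$, $S_+=[1/2,1]\times[-3/2,3/2]$ the two preimage strips. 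The heights $\{y(\underline k):\underline k\in\overleftarrow K\}$ are the attractor of the iterated function system $g_\pm(t)=2t/3\pm1/3$, and since $g_-([-1,1])\cup g_+([-1,1])=[-1,1]$ \emph{with overlap} $[-1/3,1/3]$ --- an open condition --- shrinking $V_f$ once more, for every $f'\in V_f$ the leaves $W^u_{loc}(\underline k;f')$ still sweep out every level of a fixed compact interval $J\subset(-1,1)$, and for every $c\in J$ at least one of $g_-^{-1}(c),g_+^{-1}(c)$ again lies in $J$.

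Shrinking $V_\Gamma$, each $\Gamma'\in V_\Gamma$ is the graph of $\gamma'$ with $\gamma'$ close to $2x^2$ in $C^1$; near its unique non-degenerate minimum $\gamma'$ is strictly convex (curvature near $4$) while every $w_{\underline k}$ is nearly horizontal, so any tangency produced below is quadratic (when $r\ge2$) and is detected by the \emph{defect} $\tau(\underline k):=\min_{x\in[-1,1]}\big(\gamma'(x)-w_{\underline k}(x)\big)$: if $\tau(\underline k)>0$ the leaf stays below $\Gamma'$, if $\tau(\underline k)<0$ it crosses $\Gamma'$ in two transverse points, and $\tau(\underline k)=0$ precisely when $W^u_{loc}(\underline k;f')$ is tangent to $\Gamma'$ (for $r\ge2$ this is Fact~\ref{prametrizationtangent}; for $r=1$ it is elementary). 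Since $\underline k\mapsto\tau(\underline k)$ is continuous on the compact set $\overleftarrow K$, the proposition reduces to $\inf_{\underline k\in\overleftarrow K}|\tau(\underline k)|=0$.

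To prove this I work inside a family $\mathcal F$ of ``parabola-like curves in good position'' --- the graph over a subinterval $D\subseteq[-1,1]$ of a strictly convex function with a unique interior minimum whose value lies in $J$ and which takes a fixed value at the two ends of $D$ --- which contains $\Gamma'$. Using the exact relation above, pulling the visible part of $\Gamma''\in\mathcal F$ back through $(f'|_{S_\epsilon})^{-1}$ yields a curve $\Gamma''_\epsilon$ whose minimum value is $g_\epsilon^{-1}$ of that of $\Gamma''$, so by the first paragraph $\Gamma''_\epsilon\in\mathcal F$ for a suitable (admissible) $\epsilon$ --- its domain $D_\epsilon$ shrinks, which is harmless; and --- because $f'$ carries tangencies to tangencies and is $C^1$-close to an affine map contracting the second coordinate by $2/3$ --- the defect transforms \emph{multiplicatively}, $\tau^{[\Gamma'']}(\underline k)=\varrho\cdot\tau^{[\Gamma''_\epsilon]}(\sigma^{-1}\underline k)$ for $\underline k$ in the cylinder $\{k_{-1}=\epsilon\}$, with $\varrho$ uniformly in $(0,\lambda)$ for some $\lambda<1$. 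Since $\tau^{[\Gamma''_\epsilon]}$ depends only on the backward history of its argument, taking infima over $\overleftarrow K$ gives, for an admissible $\epsilon$, $\inf_{\underline k}|\tau^{[\Gamma'']}(\underline k)|\;\le\;\lambda\inf_{\underline k}|\tau^{[\Gamma''_\epsilon]}(\underline k)|\;\le\;\lambda\sup_{\Gamma''\in\mathcal F}\inf_{\underline k}|\tau^{[\Gamma'']}(\underline k)|$. The right-hand supremum is finite (bounded by the length of the interval swept out by the leaves, since that interval contains the minimum value of every $\Gamma''\in\mathcal F$), hence it vanishes; so $\inf_{\underline k}|\tau(\underline k)|=0$, and by compactness of $\overleftarrow K$ this infimum is attained, at some $\underline k$ with $\tau(\underline k)=0$, i.e. $W^u_{loc}(\underline k;f')$ tangent to $\Gamma'$.

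The main obstacle is to make $\mathcal F$ and the renormalization step genuinely robust and uniform over all $f'\in V_f$ and $\Gamma'\in V_\Gamma$: one must set up the ``visible part'' and the good-position constraints so that $\mathcal F$ is really invariant under $\Gamma''\mapsto\Gamma''_\epsilon$ for the admissible $\epsilon$ (the specific expansion $4$ and contraction $2/3$ keep the minimum value in $J$ and the endpoint values fixed under indefinite iteration, even as the domains degenerate), verify that the ``admissible branch exists'' dichotomy --- the heart of the blender, and what distinguishes it from a merely thick hyperbolic Cantor set --- holds with a margin uniform in $f'$ and $\Gamma'$, ensure that the tangency realizing the defect occurs at an interior point (so that $\tau=0$ genuinely means a tangency of curves, quadratic when $r\ge2$), and check that $\varrho$ stays uniformly below $1$. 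With these uniform estimates secured, $V_f$ and $V_\Gamma$ can be fixed once and for all, exactly as the statement requires.
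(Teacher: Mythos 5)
Your argument is correct in substance and rests on the same mechanism as the paper's: Proposition \ref{fondablender} is not proved separately there but as the $d=0$ case of the fundamental property of the parablender (\textsection\ref{proofMl2}), whose proof pulls the parabola back through inverse branches chosen, thanks to the overlap of the two contractions $x\mapsto 2x/3\pm 1/3$, so that the minimum of each pulled-back curve stays in the covered interval, and then kills the vertical tangency defect by the uniform backward expansion of vertical vectors (the cone condition $(\mathcal C)$, giving $|\eta(0)|\le (3/4)^n\cdot 4/3$). You use exactly these two pillars --- existence of an admissible branch keeping the minimum in $J$, and contraction of the defect under one application of $f'$ --- but package them differently: instead of constructing an explicit itinerary $\underline\delta$ and estimating the defect of that single leaf, you introduce the defect functional $\tau$ on the compact inverse limit, prove a renormalization inequality $\inf_{\underline k}|\tau^{[\Gamma'']}|\le\lambda\,\inf_{\underline k}|\tau^{[\Gamma''_\epsilon]}|$ inside a pullback-invariant class $\mathcal F$, and conclude $\sup_{\mathcal F}\inf|\tau|=0$ from $S\le\lambda S$, selecting the tangent leaf by compactness and continuity rather than by construction. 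This soft variant is viable (iterating your inequality $n$ times is literally the paper's geometric bound), at the price of checking the invariance of $\mathcal F$ and the uniformity of $\lambda$ over $V_f$ --- the same verifications the paper carries out in \textsection\ref{proofMl2}, items $(a)$--$(c)$.

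One point should be repaired. The proposition is stated for $C^1$-neighborhoods, so you cannot require the curves of $\mathcal F$ --- nor $\Gamma'$ itself --- to be strictly convex with a unique non-degenerate minimum of curvature near $4$: $C^1$-closeness gives no control of curvature, and such convexity is neither available nor preserved by the pullbacks, so as written $\Gamma'\in V_\Gamma$ need not belong to $\mathcal F$. Fortunately nothing in your scheme needs it: tangency is detected solely from the fact that the minimum of $\gamma''-w_{\underline k}$ is attained at an interior point (forced by the high values at the endpoints of the domain and the low minimum), which is also why the paper's notion of $C^r$-parabola imposes $\gamma''\ge 1$ only when $r\ge 2$; the claim about ``two transverse crossings'' when $\tau<0$ should likewise be dropped, and the exact multiplicativity of the defect should be weakened to the one-sided comparison $|\tau^{[\Gamma'']}(\underline k)|\le\lambda\,|\tau^{[\Gamma''_\epsilon]}(\sigma^{-1}\underline k)|$, which is all your sup--inf argument uses and all that a $C^1$-perturbation of the affine model yields (the comparison being taken at the nearly vertical image of the minimizing segment, where the slopes of both curves are small).
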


This proposition will be generalized (and proved) below, as the fundamental property of parablender  \ref{Ml2}.
\begin{rema} \label{BlenderNewhouse}
Given such a blender, it is easy to construct a Baire generic set of local-diffeomorphisms with infinitely many sinks (see \cite{DNP06} for more details). For this end, it suffices to suppose the existence of a dissipative hyperbolic fixed point $\Omega$ so that a segment of its stable manifold is in $V_\Gamma$ and so that its unstable manifold accumulates densely in the ones of $K$. Then by the fundamental property of blenders, for every map $g$ $C^r$-close to $f$, with $r\ge 1$, the stable manifold of $\Omega$ is tangent to a local unstable manifold of $K$ which is itself close to a segment of $W^u(\Omega;g)$. Thus one can find a small perturbation $g'$ of $g$ with a  homoclinic tangency. Then by using Proposition \ref{pointapuit}, one can create a new sink for a perturbation $g''$ of $g'$. Repeating this process inductively, one constructs a Baire generic set of local diffeomorphisms with infinitely many sinks.   
\end{rema}


\subsection{An example of Parablender} \label{defparablender}

Contrarily to blenders, the parablenders can uniquely be defined for families of dynamics $(f_a)_a$, say of class $C^d$. 
They are parameter family of hyperbolic set designed to produce not only homoclinic tangencies but also homoclinic $C^d$-paratangencies for perturbations of $f_a$ at every small parameter $a$. Indeed we will see in the next section that a $C^d$-paratangency implies the existence of a perturbation of the family having a new periodic sink which persists for a large open set of parameters (see Prop. \ref{sinkcreation}). Hence parablender will be crucial for the proof of Theorem \ref{main}. 

A blender satisfies that the tangent spaces of its unstable manifolds contain robustly a point of the tangent space of a parabola. 
Given a $C^d$-family of curves $(\gamma_a)_a$, we consider not only the tangent space $T\gamma_a$ of $\gamma_a$, but also the \emph{parameter $d$-Jet} $(\partial_a^j T\gamma_a)_{j=0}^d$ of $T\gamma_a$.
In contrast with the blender, a $C^d$-parablender  satisfies that the parameter $d$-Jets of the tangent spaces of its unstable manifolds
contain robustly a point of the parameter $d$-Jet of the tangent space of a parabola family. 

An animation drawing the dynamics of a blender and its fundamental property can be found in \cite{webpage}.  


\paragraph{Definition of $\mathring f$ and its Markov partition}
We are going to construct the parablender similarly. We shall first define the parablender of $k$-families of local diffeomorphisms, with $k=1$. Below we will give the modification for $k>1$. Also we now fix the integer $d\ge 0$.

Let $Q\colon x\in \mathbb S^1:= \mathbb R/6\mathbb Z\mapsto 4x-3\in \mathbb S^1$. We recall that $Q$ sends $[-1,-1/2]$ and  $[1/2,1]$ onto $[-1,1]$.
Let $\rho\in C^\infty (\mathbb S^1, [0,1])$ be a smooth function on $\mathbb S^1$ equal to $1$ on $[1/2,1]$ and $-1$ on $[-1,-1/2]$, with support in $[-3/2, -1/4]\cup [1/4, 3/2]$. 

We are going to look at a family  $(f'_a)_{a\in \R}$, with $f'_a$ close to the following map $\mathring f$:
\[ \mathring f\colon (x,y)\in \mathbb R/6\Z \times \R\mapsto (Q^{d+1}(x), 2y/3+\rho(x)/3)\in \mathbb R/6\Z \times \R\; .\]

\paragraph{Parablender Definition }
Let $\Delta_d:= \{-1,1\}^{d+1}$ be the set formed by $(d+1)$-upplets  $\delta= (\delta(i))_{0\le i\le d}\in \{-1,1\}^{d+1}$.

Let $I_0:=[-1,1]$,  $I_{-1}:= [-1,-1/2]$ and $I_{+1}:= [1/2,1]$. 

We associate to $\delta\in \Delta_d$ the following intervals:
\[I_{\delta} := (Q|I_{\delta(0)})^{-1} \circ \cdots  \circ (Q|I_{\delta(d)})^{-1}(I_0).\]

Put $Y_0:= [-1,1]\times [-3/2,3/2]$, $Y_{\delta}:= I_{\delta}\times [-3/2,3/2]$. We define:
$$\mathring f\colon (x,y)\in \sqcup_{\delta\in \Delta_d} Y_\delta=\left\{\begin{array}{cl}
(Q^{d+1}(x), 2y/3+ 1/3)& \delta(0)=1,\\
(Q^{d+1}(x), 2y/3- 1/3)& \delta(0)=-1.\end{array}\right.$$

We notice that $\mathring f$ sends $Y_{\delta}:= I_{\delta}\times [-3/2,3/2]$ onto 
$[-1,1]\times [-2/3,4/3]\subset Y_0$ if $\delta(0)=1$ and onto $[-1,1]\times [-4/3,2/3]\subset Y_0$ if $\delta(0)=-1$. 

Each $\delta\in \Delta_d$ is associated to a polynomial:
\[P_{\delta} (X)= \sum_{i=1}^d \frac{\delta(i)}{i!} X^i.\]

For every $\delta\in \Delta_a\cup\{0\}$, let $\tilde Y_{\delta}$ be a small compact neighborhood of $Y_\delta$, so that $(\tilde Y_\delta)_{\delta\in \Delta_a}$ are disjoint.

For every $f$ $C^r$-close to $\mathring f$, for $a\in (-1,1)$ and $\epsilon$ small ($\ll \max_{\delta} d(\tilde Y_\delta^c, Y_\delta)$), we define:
\[f_{\epsilon\, a}(z)=  f(z)+(0,\epsilon P_{\delta}(a)),\quad \text{if } z\in \tilde Y_{\delta}.\]

Hence, for $f=\mathring f$, on $Y_\delta$:
\[\left\{\begin{array}{cl}
f_{\epsilon\, a}(x,y)=(Q^{d+1}(x), 2y/3+1/3+\epsilon P_{\delta}(a))&\text{if }\delta(0)=1,\\
f_{\epsilon\, a}(x,y)=(Q^{d+1}(x), 2y/3-1/3+\epsilon P_{\delta}(a))&\text{if }\delta(0)=-1.\end{array}\right.\]

As the compact sets $(\tilde Y_\delta)_{\delta\in \Delta_d}$ are disjoint, we can extend the map $\sqcup_{\delta\in \Delta_a} Y_\delta\mapsto \epsilon P_{\delta}$ to a $C^r$-map of $\mathbb S^1\times \R$. Therefore:
\begin{fact} For every $f$ $C^r$-close to $\mathring f$, for every $\epsilon$ small, the map $f_{\epsilon\, a}$ can be extended to a $4^{d+1}$-$C^\infty$-covering of the annulus $\mathbb S^1\times \R$. Also the family $(f_{\epsilon\, a})_a$ is of class $C^{\infty,r}$.
\end{fact}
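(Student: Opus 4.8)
\emph{Proof plan.}
The statement contains two assertions: that the locally defined $f_{\epsilon\,a}$ extends to a $4^{d+1}$-sheeted covering of $\mathbb S^1\times\R$, and that the family $(f_{\epsilon\,a})_a$ so obtained is of class $C^{\infty,r}$. The plan is to read off the covering property from the corresponding, completely explicit property of the unperturbed model $\mathring f$ by a stability argument for coverings, and to obtain the regularity from the fact that the $a$-dependence of the defining formula is polynomial.

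First I would check that $\mathring f$ itself is a $4^{d+1}$-to-$1$ $C^\infty$-covering of $\mathbb S^1\times\R$. Writing $\Phi\colon(x,y)\mapsto\bigl(x,\tfrac23 y+\tfrac13\rho(x)\bigr)$, which is a $C^\infty$-diffeomorphism of $\mathbb S^1\times\R$ with inverse $(x,y)\mapsto\bigl(x,\tfrac32 y-\tfrac12\rho(x)\bigr)$, one has $\mathring f=(Q^{d+1}\times\mathrm{id}_\R)\circ\Phi$. The map $Q\colon x\mapsto 4x-3$ is a well-defined local diffeomorphism of the compact manifold $\mathbb S^1=\R/6\Z$ of constant derivative $4$, hence a degree-$4$ covering; therefore $Q^{d+1}$ is a degree-$4^{d+1}$ covering, $Q^{d+1}\times\mathrm{id}_\R$ is a $4^{d+1}$-to-$1$ $C^\infty$-covering, and composing with the diffeomorphism $\Phi$ gives the claim for $\mathring f$.

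Next I would make the extension explicit. Since the compact sets $(\tilde Y_\delta)_{\delta\in\Delta_d}$ are pairwise disjoint, choose $C^\infty$ functions $\chi_\delta$ on $\mathbb S^1\times\R$ with $\chi_\delta\equiv1$ on $\tilde Y_\delta$ and with pairwise disjoint supports, and put
\[\sigma_a:=\sum_{\delta\in\Delta_d}\epsilon\,P_\delta(a)\,\chi_\delta\,,\qquad f_{\epsilon\,a}:=f+(0,\sigma_a)\quad\text{on all of }\mathbb S^1\times\R\,;\]
this coincides with the defining formula on $\sqcup_\delta\tilde Y_\delta$. As $|P_\delta|$ is bounded on $[-1,1]$, one has $\|(0,\sigma_a)\|_{C^r}\le C\epsilon$ uniformly in $a\in(-1,1)$, so for $f$ uniformly $C^r$-close to $\mathring f$ and $\epsilon$ small, the map $g_s:=\mathring f+s(f_{\epsilon\,a}-\mathring f)$ is, for every $s\in[0,1]$, uniformly $C^1$-close to $\mathring f$; in particular its derivative stays uniformly invertible and $g_s$ stays uniformly $C^0$-close to the proper map $\mathring f$, so $g_s$ is a proper local diffeomorphism of the connected manifold $\mathbb S^1\times\R$, hence a covering map. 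Since the number of sheets is locally constant along such a path of coverings of a connected base, $f_{\epsilon\,a}=g_1$ has the same number of sheets as $\mathring f=g_0$, namely $4^{d+1}$. Thus $f_{\epsilon\,a}$ extends to a $4^{d+1}$-to-$1$ covering of $\mathbb S^1\times\R$, of class $C^r$ (and of class $C^\infty$ when $f=\mathring f$).

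Finally, $f_{\epsilon\,a}$ differs from the $a$-independent $C^r$-map $f$ by the term $(0,\sigma_a)$, and $\sigma_a$ depends polynomially on $a$ and is $C^\infty$ in $z$; hence $\partial_a^j\partial_z^i f_{\epsilon\,a}$ exists and is jointly continuous in $(a,z)$ for every $j\ge0$ and every $i\le r$, i.e. $(f_{\epsilon\,a})_a$ is of class $C^{\infty,r}$. The only step that is not purely formal is the covering claim over the non-compact annulus: one must use the \emph{uniform} $C^1$-closeness furnished by the sup-norm on $C^r(\mathbb S^1\times\R,\mathbb S^1\times\R)$ — rather than mere local closeness — together with the properness of $\mathring f$, in order to run the stability argument for coverings; everything else is bookkeeping.
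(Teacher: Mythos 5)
Your proposal is correct and follows essentially the same route as the paper, which simply remarks that the disjointness of the compact sets $\tilde Y_\delta$ allows one to extend the piecewise vertical translation $z\mapsto (0,\epsilon P_\delta(a))$ to a global map and takes the covering and regularity statements as immediate consequences of closeness to the explicit covering $\mathring f$; you merely supply the standard details (bump-function extension, properness, uniform invertibility of the derivative, and constancy of the number of sheets along the straight-line homotopy). Your parenthetical that the extension is only $C^r$ for a general $f$ (and $C^\infty$ only when $f=\mathring f$) is the right reading of the Fact's loosely stated ``$C^\infty$-covering''.
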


\begin{defi}[$C^d$-Parablender for $(f_a)_a$]
For every $f$  $C^r$-close to $\mathring f$ and $\epsilon$ small, let $\arr {B}_d(f_{\epsilon\, a})\subset (\tilde Y_0)^\Z$ be the subset of orbits of $f_{\epsilon\, a}$ which remain in $\cup_{\delta \in \Delta_d} \tilde Y_\delta $:
\[\arr {B}_d(f_{\epsilon\, a}) := \{(z_i)_{i\in \Z}\in (\cup_{\delta \in \Delta_d} \tilde Y_\delta)^\Z:\; z_{i+1} =f_{\epsilon\, a}(z_i)\}\; .\]\end{defi}
We notice that the compact set $\arr {B}_d(f_{\epsilon\, a})$ is hyperbolic and canonically homeomorphic to $ {\Delta}_d^\Z$. 
Let $B_d(f_{\epsilon\, a})$ be the image of $\arr {B}_d(f_{\epsilon\, a})$ by the zero-coordinate projection.

Any sequence $\underline \delta:= ( \delta_i)_{i}\in {\Delta}_d^{\Z}$ defines local stable and unstable manifolds:
\[W^s_{loc}(\underline  \delta, f_{\epsilon\, a}):= Y_0\cap \bigcap_{i\ge 0} f^{-i}_{\epsilon\, a}(\tilde Y_{ \delta_{i}})\quad \mathrm{and}\quad W^u_{loc}(\underline  \delta, f_{\epsilon\, a}):= Y_0\cap\bigcap_{i\ge 1} f^{i}_{\epsilon\, a}(\tilde Y_{ \delta_{-i}}).\]

Let $\underline \delta(a)$  be the unique intersection of $W^s_{loc}(\underline  \delta, f_{\epsilon\, a})$ and $W^u_{loc}(\underline  \delta, f_{\epsilon\, a})$.

When $f=\mathring f$, let us notice that $W^u_{loc}(\underline  \delta, \mathring f_{\epsilon\, a})$ is of the form 
$[-1,1]\times \{y(\underline  \delta,a)\}$, with $y(\underline  \delta,a)\in [-1,1]$, and the derivatives $(\partial^i_a  y(\underline  \delta,a))_{1\le i\le d}$  are small for $\epsilon$ small. Actually we can give the explicit expression of $y(\underline  \delta,a)$:
\[y(\underline  \delta,a)= \sum_{i=1}^\infty (2/3)^{i-1} \frac{\delta_{-i}(0)}{3} +\epsilon \sum_{i=1}^\infty (2/3)^{i-1}  P_{\delta_{-i}}(a).\] 
 
For any perturbation $f'_a$ of $f_a$, the hyperbolic set $\arr {B}_d(f_a)$ persists as a set $\arr {B}_d(f'_a)$, and its unstable manifolds as well. We denote by  $(W^u_{loc}(\underline  \delta, f'_a))_{\underline \delta}$ the unstable manifolds of $f'_a$ obtained by hyperbolic continuation.

A parablender satisfies a parameter version of the fundamental Property \ref{fondablender} of the blender. Robustly in the $C^{d,r}$-topology,  any $C^{d,r}$-family of parabolas close to the constant family $a\mapsto ((-1,1) \ni t \mapsto 2t^2\in \R)$ is $C^d$-paratangent to the parameter family of some of its unstable manifolds: 
\begin{propfonda}\label{Ml2}
For every $\infty \ge r>d\ge 0$ or $\infty > r = d\ge 2$, 
there exist $\alpha>0$, a neighborhood $U_0$ of $\mathring f$ and a neighborhood $V_\gamma$ of the constant family $a\mapsto ((-1,1) \ni t \mapsto 2t^2\in \R)$ in $C^d((-1,1), C^r([-1,1],\R))$, such that for every $\epsilon>0$ small, there exists a neighborhood $U_\epsilon$ of $\{ (f_{a\, \epsilon})_a\colon f\in U_0\}$ in the $C^{d,r}$-topology satisfying the following property: 

For every $(f_a)_a\in U_\epsilon$, every $(\gamma_a)_a\in V_\gamma$ and  every $|a_0|<\alpha$, there exists  $\underline  \delta\in {\Delta}_d^{\Z^-}$ such that $W^u_{loc}(\underline \delta;f_{a_0})$ is tangent to $Graph\, \gamma_{a_0}$ and the families $(W^u_{loc}(\underline  \delta;f_{a}))_a$ and $(Graph\, \gamma_a)_a$ are $C^{d}$-paratangent at $a_0$.
\end{propfonda}
\begin{rema} By Proposition \ref{cartecool4cpct}, the family  $(W^u_{loc}(\underline  \delta;f'_{a}))_a$ is of class $C^{d,r}$, so the $C^d$-paratangency makes sense in this fundamental property.
\end{rema}
This fundamental property is shown in \textsection \ref{proofMl2} below.

\paragraph{Parablender for the case $k>1$.}
Let $r>d\ge 0$ and $k\ge 1$.

Parablender are designed to get robust $C^d$-paratangencies. For one-dimensional parameter families, we defined the parablender thanks to a blender with $2^{d+1}$-symbols, since Equality $(\mathcal P)$ involves $d+1$ derivatives.

 When the families are $k$-dimensional, Equality $(\mathcal P)$ involves $d'$-derivatives, 
with $d'$ be the dimension of the space $\{P\in P[X_1,\dots, X_k]:\; deg \; P\le d,\; P(0)=0\}$. Hence we shall define the parablender thanks to $2^{d'}$-symbols.

Let $(Q_i)_{i=1}^{d'}$ be a canonical base of $\{P\in P[X_1,\dots, X_k]:\; deg \; P\le d,\; P(0)=0\}$. Hence each $Q_i$ is of the form $X_1^{i_1}\cdots X_k^{i_k}$ with $1\le i_1+\dots +i_k\le d$. For $\delta\in \Delta_{d'}$ we put:
\[P_\delta(X_1,\cdots ,X_k)= \sum_{i=1}^{d'} \frac{\delta(i)}{i_1!\cdots i_k!} X_1^{i_1}\cdots X_k^{i_k}.\]
For the case $k>1$, we shall consider instead:
\[ \mathring f\colon (x,y)\in \mathbb R/6\Z \times \R\mapsto (Q^{d'+1}(x), 2y/3+\rho(x)/3)\in \mathbb R/6\Z \times \R,\]
And for a map $f$ $C^r$-close to $\mathring f$, $a=(a_1,\dots ,a_k)\in (-1,1)^k$, we define for every $\delta\in \Delta_{d'}$:
 \[f_{\epsilon\, a}(z)=  \mathring f(z)+(0,\epsilon P_{\delta}(a_1,\dots ,a_k)),\quad \text{if } z\in \tilde Y_{\delta}.\]
Likewise it is easy to extend $f_{\epsilon\, a}$ from $\cup_{\delta \in \Delta_{d'}} \tilde Y_{\delta}$ to $\mathbb S^1\times \R$ as a $C^r$-map, so that $(f_{\epsilon,\, a})_a$ is of class $C^{\infty, r}$.

\begin{defi}[$C^d$-Parablender for $(f_a)_a$]
For every $f$  $C^r$-close to $\mathring f$ and $\epsilon$ small, let $\arr {B}_{d\, k}(f_{\epsilon\, a})\subset (\tilde Y_0)^\Z$ be the subset of orbits of $f_{\epsilon\, a}$ which remain in $\cup_{\delta \in \Delta_{d'}} \tilde Y_\delta $:
\[\arr {B}_{d\, k}(f_{\epsilon\, a}) := \{(z_i)_{i\in \Z}\in (\cup_{\delta \in \Delta_{d'}} \tilde Y_\delta)^\Z:\; z_{i+1} =f_{\epsilon\, a}(z_i)\}\; .\]\end{defi}
We notice that the compact set $\arr {B}_{d\, k}(f_{\epsilon\, a})$ is hyperbolic and canonically homeomorphic to $ {\Delta}_{d'}^\Z$. 
Let $B_{d\, k}(f_{\epsilon\, a})$ be the image of $\arr {B}_{d\, k}(f_{\epsilon\, a})$ by the zero-coordinate projection.

Likewise, any sequence $\underline \delta:= ( \delta_i)_{i}\in {\Delta}_{d'}^{\Z}$ defines local stable and unstable manifolds:
\[W^s_{loc}(\underline  \delta, f_{\epsilon\, a}):= Y_0\cap \bigcap_{i\ge 0} f^{-i}_{\epsilon\, a}(\tilde Y_{ \delta_{i}})\quad \mathrm{and}\quad W^u_{loc}(\underline  \delta, f_{\epsilon\, a}):= Y_0\cap\bigcap_{i\ge 1} f^{i}_{\epsilon\, a}(\tilde Y_{ \delta_{-i}}).\]
 
For any perturbation $f'_a$ of $f_a$, the hyperbolic set $\arr {B}_{d\, k}(f_a)$ persists as a set $\arr {B}_{d\, k}(f'_a)$, and its unstable manifolds as well. We denote by  $(W^u_{loc}(\underline  \delta, f'_a))_{\underline \delta}$ the unstable manifolds of $f'_a$ obtained by hyperbolic continuation.

For $k$-parameter families, the set of its unstable manifolds satisfies the same fundamental property:
\begin{propfonda}\label{Ml2k}
For every $\infty \ge r>d\ge 0$ or $\infty >r= d\ge 2$, there exist $\alpha>0$, a neighborhood $U_0$ of $\mathring f$ and a neighborhood $V_\gamma$ of the constant family $a\mapsto ((-1,1) \ni t \mapsto 2t^2\in \R)$ in $C^d((-1,1)^k, C^r([-1,1],\R))$, such that for every $\epsilon>0$ small, there exists a neighborhood $U_\epsilon$ of $\{ (f_{a\, \epsilon})_a\colon f\in U_0\}$ in the $C^{d,r}$-topology satisfying the following property: 

For every $(f_a)_a\in U_\epsilon$, every $(\gamma_a)_a\in V_\gamma$ and every $\|a_0\|<\alpha$, there exists  $\underline  \delta\in {\Delta}_{d'}^{\Z^-}$ such that $W^u_{loc}(\underline \delta;f_{a_0})$ is tangent to $Graph\, \gamma_{a_0}$ and the families $(W^u_{loc}(\underline  \delta;f_{a}))_a$ and $(Graph\, \gamma_a)_a$ are
$C^{d}$-paratangent at $a_0$.
\end{propfonda}
The proof of this fundamental property is similar to the one of Property \ref{Ml2}. It is left to the reader.
\subsection{Proof of fundamental property of the parablenders \ref{Ml2}} \label{proofMl2}

Given a  families of dynamics $(f_a)_a$, we are going to define a sequence of preimages $((\gamma_a^{n})_a)_{n\le 0}$ of the family of parabola $(\gamma_a)_a$.
We proceed by induction. At the step $n=0$, clearly $(\gamma_a^{0})_a$ is equal to 
$(\gamma_a)_a$. At the step $n-1<0$, we will suppose that the derivatives w.r.t. $a$ of the $y$-coordinate of the minimum of $\gamma_a^{n}$ are small, and we will chose the inverse branch 
of $(f_a)_a$ so that the derivatives of $y$-coordinate of the minimum of $\gamma_a^{n-1}$ remain small.
The choice of these inverse branches will define the unstable manifold of the parablender to which $(\gamma_a)_a$ is $C^d$-paratangent.

First we are going to study the property of the  $y$-coordinate of the minimum of parabolas family. Then we will define these sequences of inverse branches when $(f_a)_a$ is of the form  $(f_{a\, \epsilon})_a$ for $f$ $C^r$-close to $\mathring f$, to exhibit the existence of robust $C^d$-paratangency. Finally, we will show the general case.

\subsubsection{Parabolas and function $\min\gamma$}
\begin{defi}[Function $\min\gamma$]\label{mingamma}
For every continuous family $(\gamma_a)_{a}$ of functions $\gamma_a\in C^0(I ,\R)$, with $I$ an interval of $\R$, let us define:
\[a\mapsto \min\gamma(a):= \inf_{t\in I} \gamma_a(t)\in \R.\]
\end{defi}

\begin{defi}[$C^r$-Parabola]
 For $r\ge 1$, a \emph{$C^r$-parabola} is the graph of a function $\gamma\in C^r(I, \R)$ over a  segment $I$ of $\mathbb R$ so that:
\begin{itemize}
\item At the end point of $I$ the value of $\gamma$ is $\ge 3/2$: 
\[\gamma|\partial I \ge 3/2,\]
\item the minimum of $\gamma$ satisfies:
\[|\min \gamma|\le 2/3\]
\item If $r\ge 2$, it holds $\gamma''(t)\ge 1$ for every $t\in I$.
\end{itemize}
\end{defi}

\begin{lemm}\label{scholemme}
If the function  $(t,a)\mapsto \gamma_a(t)$ is continuous then $\min\gamma$ is continuous with small uniform norm.

If $(\gamma_a)_a$ is a $C^d$-family of parabolas, with $r>d\ge 0$ or $r= d\ge 2$, then the function $a\mapsto \min \; \gamma(a)$ is of class $C^d$.

\end{lemm}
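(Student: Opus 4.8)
The plan is to handle the two assertions of Lemma \ref{scholemme} separately: the first is soft, and the second reduces, via convexity and the implicit function theorem, to a derivative count of the same flavour as in the proof of Fact \ref{prametrizationtangent}.

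\textbf{Continuity and bound.} For the first assertion I would observe that a parabola is the graph of a function over a (fixed) segment $I$, so if $(t,a)\mapsto\gamma_a(t)$ is continuous then $a\mapsto\gamma_a\in C^0(I,\R)$ is continuous; the infimum of a continuously varying continuous function over the compact set $I$ is continuous, hence $\min\gamma$ is continuous. The ``small uniform norm'' is nothing but the defining inequality $|\min\gamma(a)|\le 2/3$ of a parabola. This also settles the case $d=0$ of the second assertion (recall that $r=d$ forces $d\ge 2$, so $d=0$ occurs only with $r\ge 1$).

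\textbf{The case $d\ge 1$.} Here $r\ge 2$, so the convexity $\gamma_a''\ge 1$ is available; thus $\gamma_a$ has a unique critical point $c_a$, which lies in the interior of $I$ since $\gamma_a|\partial I\ge 3/2>2/3\ge\min\gamma(a)$, and $\min\gamma(a)=\gamma_a(c_a)$. Solving $\partial_t\gamma_a(t)=0$ by the implicit function theorem with $\partial_t^2\gamma_a(c_a)\ge 1$ --- exactly as in Fact \ref{prametrizationtangent}, the map $(a,t)\mapsto\partial_t\gamma_a(t)$ being jointly $C^{\min(d,r-1)}$ --- gives that $a\mapsto c_a$ is of class $C^{\min(d,r-1)}$. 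If $r>d$ then $\min(d,r-1)=d$, so $c_a$ is $C^d$; since $(a,t)\mapsto\gamma_a(t)$ is jointly $C^{\min(d,r)}=C^d$, the composition $\min\gamma(a)=\gamma_a(c_a)$ is $C^d$ and we are done. If instead $r=d\ge 2$, this composition yields only $C^{d-1}$ directly, so I would differentiate once by hand: $\min\gamma$ is already known to be $C^1$, and since $\partial_t\gamma_a(c_a)=0$ the chain rule collapses to $\partial_a\min\gamma(a)=(\partial_a\gamma_a)(c_a)$; now $(a,t)\mapsto\partial_a\gamma_a(t)$ is jointly $C^{d-1}$ and $c_a$ is $C^{\min(d,r-1)}=C^{d-1}$, so $\partial_a\min\gamma$ is $C^{d-1}$, whence $\min\gamma$ is $C^d$. (For $k$-dimensional parameters $\partial_a$ is read componentwise, with no change.)

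\textbf{Main obstacle.} The only genuinely delicate point is the borderline case $r=d$: a direct application of the chain rule loses one derivative, and one recovers it precisely because the term $\partial_t\gamma_a(c_a)\,\partial_a c_a$ vanishes at the critical point. A lesser point to watch is that the infimum is attained at an \emph{interior} critical point --- which is where the boundary condition $\gamma_a|\partial I\ge 3/2$ together with convexity enter, and what makes the implicit function theorem applicable --- and, for the first assertion, that the common domain of the parabolas can be taken fixed (or one restricts to a fixed subinterval containing all the nearby critical points).
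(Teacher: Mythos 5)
Your proof is correct and follows essentially the same route as the paper, which disposes of $d=0$ as immediate and reduces the case $r\ge d\ge 1$, $r\ge 2$ to Fact \ref{prametrizationtangent}: the implicit function theorem at the nondegenerate interior critical point $c_a$, plus the observation that the chain-rule term $(\partial_t\gamma_a)(c_a)\,\partial_a c_a$ vanishes so that $\partial_a\min\gamma(a)=(\partial_a\gamma_a)(c_a)$ is $C^{d-1}$, recovering the lost derivative in the borderline case $r=d$. Your added checks (interior minimum via $\gamma_a|\partial I\ge 3/2$ and convexity, and the domain issue) are sound refinements of the same argument rather than a different approach.
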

\begin{proof} The case $d=0$ is clear. The case $\infty \ge r\ge d\ge 1$ with  $r\ge 2$ follows from  Fact \ref{prametrizationtangent}.\end{proof}

\begin{rema}
On the other hand, for $d\ge 1$ and $r=1$, the function $\min\gamma$ is not always of class $C^1$: indeed, one can construct a family $(\gamma_a)_a$ with two local minima having the same value at $a=0$, one of which is increasing with $a$, the other being decreasing. The function $\min \gamma$ is then not differentiable at $a=0$. This is why  we suppose $d< r$ in the fundamental property of parablender.
\end{rema}
\subsubsection{Verification of the fundamental property for the family $(\mathring f_{\epsilon \, a})_a$ at $a_0=0$}
Let $r>d\ge 0$ and let  $(\gamma_a)_{a\in(-1,1)}$ be a family $C^{d,r}$-close to the constant family $(t\in [-1,1]\mapsto 2t^2)_a$. 
Let us show that there exists $\underline \delta\in {\Delta}_d^{\Z}$ such that $W^u_{loc}(\underline \delta;\mathring f_{\epsilon \, 0})$ is tangent to $Graph\, \gamma_{0}$ and $(W^u_{loc}(\underline \delta; \mathring f_{\epsilon \, 0}))_a$ and $(Graph\, \gamma_a)_a$ are $C^d$-paratangent at $a=0$.

We are going to define by decreasing induction on $n\le 0$, a symbol $\delta_{-n-1}\in \Delta_d$  and a $C^{d,r}$-family of parabolas $(\gamma^n_a)_a$ defined for $t$ in an interval of $[-1,1]$ and $a$ close to $0$, so that:
\begin{itemize}
\item $\gamma_a^0:= \gamma_a$,
\item the graph of  $\gamma_a^n$ is a $C^r$-parabola,  
\item the function $a\mapsto \min\gamma^n(a)\in \R$ defined in \ref{mingamma} satisfies:
\begin{equation}\tag{$\dagger$} |\min \gamma^n(0)|<2/3,\quad |{\partial_a^i \min\gamma^n(0)}|\le 2\epsilon ,\quad \forall 1\le i\le d.\end{equation}
\item  $Graph\, \gamma^{n+1}_a= \mathring  f_a\left(Y_\delta\cap Graph\, \gamma^{n}_a\right)$.
\end{itemize}
After proving the induction, we will see that with $\underline \delta:= (\delta_i)_i$ (with any $\delta_i\in \Delta_d$, for $i\ge 0$) satisfies that 
$W^u_{loc}(\underline \delta;\mathring f_{\epsilon \, a})$ and $(Graph\, \gamma_a)_a$ are $C^d$-paratangent at $a=0$.
For the step $n=0$, it suffices to take $V_\gamma$ small enough so that every $\gamma_a^0:= \gamma\in V_\gamma$ has a graph which is a $C^r$-parabola and satisfies  ($\dagger$).

Let $\delta_{-1}(i) $ be the sign of $\partial_a^i \min \gamma^0(0)$ for every $i\le d$ and let $\delta_{-1}= (\delta_{-1}(i))_{i=0}^d\in \Delta_d$.

Let $\gamma_a^{-1}$ be the preimage of $\gamma^0_a$ by $\mathring f_a|\tilde Y_{\delta_{-1}}$. The function $\min\gamma^{-1}(a)$, equal to the smallest $y$-coordinate of $\gamma^{-1}_a$, has its $d$-first derivatives which can be given explicitly:
\[\min \gamma^{-1}(0) = \frac32 (\min\gamma^0(0)-  \delta_{-1}(0)/3),\quad 
\partial_a^i \min\gamma^{-1}(0)= \frac32 ( \partial_a^i \min\gamma^0(0)-\epsilon \delta_{-1}(i)), \quad \forall 1\le i\le d.\]

We notice that $|\min\gamma^0(0)-  \delta_{-1}(0)/3|< 1/3$ and $|\partial_a^i \min\gamma^0(0)-\epsilon \delta^0(j)|< \epsilon $, for all $d\ge i\ge1$, hence 
\[|\min\gamma^{-1}(0)|\le  \frac32 \frac13  <2/3\quad \mathrm{and}\quad  |\partial_a^i \min\gamma^{-1}(0)|\le  \frac32 \epsilon <2\epsilon.\]

Furthermore, it is easy to see that the values of $\gamma^{-1}$ at its end points of its definition domain is greater than 
$\min(3/2 (\gamma^0|\{-1,1\}), 3/2)$. As $\gamma^0|\{-1,1\}$ is close to $3/2$, the values of $\gamma^{-1}$ at its end points of its definition domain is $3/2$.

When $r\ge 2$, it is also easy to see that $\partial_t^2 \gamma^{-1}$ is greater than $\partial_t^2 \gamma^{0}\ge 1$. Thus $Graph\; \gamma^{-1}$ is indeed a parabola.

Let $n\le -1$, and suppose $\gamma^n_a$ constructed for $a$ small so that $\gamma^n$ has a graph which is a parabola and satisfies $(\dagger)$. Let 
$\delta_n(i)\in\{-1,1\} $ be the sign of $\partial_a^i \min\gamma^n(a)$ for every $i\le d$ and put $ \delta_n= (\delta_n(i))_{i=0}^d\in \Delta_d$.
Let $\gamma_a^{n-1} $ be the preimage of $\gamma^n_a$ by $\mathring f_{\epsilon\, a}|\tilde Y_{m_n}$. As above, we show that $\min\gamma^{n-1}$ satisfies $(\dagger)$ and its graph is a parabola. This finishes the proof of the induction hypothesis.

Let us show that the local unstable manifold $W^u_{loc}(\underline \delta;\mathring f_{\epsilon\, 0})$
is tangent to $\gamma_{0}$, and that $(W^u_{loc}(\underline \delta;\mathring f_{\epsilon\, a}))_a$ is  $C^d$-paratangent to $(\gamma_a)_a$ at $a=0$.

For this end it suffices to remark that for every small $a$:
\[\min\gamma(a)= (2/3)^N \min \gamma^{-N}(a)+ \sum_{n=1}^N (2/3)^{n} (\delta_{-n}(0)/3+ \epsilon P_{\delta_{-n}}(a)),\quad \forall N\ge 1,\]
and that:
\[W^u_{loc}(\underline \delta;\mathring f_{\epsilon \; a})= [-1,1]\times\{ y_{\underline \delta,a}\}\quad \text{with}\; y_{\underline \delta,a}:=\sum_{n=1}^\infty (2/3)^{n} (\delta_{-n}(0)/3+ \epsilon P_{\delta_{-n}}(a)).\]

We see that $\min\gamma(0)$ converges to $ y_{\underline \delta,0}$. Thus there exists a tangency and so a $C^0$-paratangency. This solves the case $d=0$ and so $r=1$.

When $r\ge 2$, we put $\tau(a):= (0,-\eta(a))$ with
$$\eta(a)= \min\gamma(a)-y_{\underline \delta,a}= (2/3)^N \min\gamma^N(a) -\sum_{n=N+1}^\infty (2/3)^{n} (\delta_{-n}(0)/3+ \epsilon P_{\delta_{-n}}(a)).$$

We remark that $W^u_{loc} (\underline \delta; \mathring f_{\epsilon\, a})+\tau(a)$ is tangent to $Graph\, \gamma_a$ for $a$ small.
Also the map $\eta$ is a function of class $C^d$ of $a$. By $(\dagger)$, the $d$-first derivatives of $(2/3)^N \min \gamma^{-N}(a)$ at $a=0$ are small when $N$ is large. Furthermore,   the $d$-first derivatives of $\sum_{n=N+1}^\infty (2/3)^{n} (\delta_{-n}(0)/3+ \epsilon P_{\delta_{-n}}(a))$ are small as well. Consequently, the $d$-first derivatives of $\eta$ at $a=0$ are small when $N$ is large and so equal to $0$. This proves the $C^d$-paratangency.
\label{section3.2.3}
\subsubsection{Proof of the fundamental property in its whole generality}

Let $\infty \ge r>d\ge 0$ or $\infty >r = d\ge 2$. Let us show the existence of:
\begin{itemize}
\item a $C^r$-neighborhood $U_0$ of $\mathring f$, 
\item  a $C^{d,r}$-neighborhood $U_\epsilon$ of $\{(f_{\epsilon\, a})_a:\; f\in U_0\}$ for every $\epsilon>0$ small,
\item  a neighborhood $V_\gamma$ of $(t\mapsto 2t^2)_a$,
\end{itemize}
so that for every $(f_a)_a\in U_\epsilon$, $(\gamma_a)_a\in V_\gamma$ and $a_0\in V_a$, there exists $\underline \delta\in {\Delta}_d^{\Z}$ satisfying that $W^u_{loc}(\underline \delta;f_{0})$ is tangent to $Graph\, \gamma_{0}$ and the family $(W^u_{loc}(\underline \delta;f_{a}))_a$ is $C^d$-paratangent to $(Graph\, \gamma_a)_a$ at $a=0$.

\paragraph{Generalization to the case $a_0 \not=0$} We notice that for every $C^{d,r}$-families $(f_a)_a$ and $(\gamma_a)_a$, 
for $a_0$ small, the family $(f_{a_0+a})_a$ and $(\gamma_{a+a_0})_a$ are $C^{d,r}$-close to $(f_a)_a$. Thus, by shrinking $V_\gamma$ and $U_\epsilon$, the above statement implies the existence of:
\begin{itemize}
\item a $C^r$-neighborhood $U_0$ of $\mathring f$, 
\item  a $C^{d,r}$-neighborhood $U_\epsilon$ of $\{(f_{\epsilon\, a})_a:\; f\in U_0\}$ for every $\epsilon>0$ small,
\item a neighborhood of $V_a$ of $0\in [-1,1]$,
\item  a neighborhood $V_\gamma$ of $(t\mapsto 2t^2)_a$,
\end{itemize}
so that for every $(f_a)_a\in U_\epsilon$, $(\gamma_a)_a\in V_\gamma$ and $a_0\in V_a$, there exists $\underline \delta\in {\Delta}_d^{\Z}$ satisfying that $W^u_{loc}(\underline \delta;f_{a_0})$ is tangent to $Graph\, \gamma_{a_0}$ and the family $(W^u_{loc}(\underline \delta;f_{a}))_a$ is $C^d$-paratangent to $(Graph\, \gamma_a)_a$ at $a=a_0$.

Consequently we can study the case $a_0=0$.

Let  $H_\epsilon$ be the $\epsilon^2$-neighborhood of $(id_{\R^2})_a$ in $C^d((-1,1)^k, Diff^r(\R^2))$. In what follows $\epsilon$ is supposed small.

Let us now define $V_\gamma$, $U_0$ and $U_\epsilon$.

\paragraph{Definition of $V_\gamma$.} Let $V_\gamma$ be a small $C^{d,r}$-neighborhood of $(t\mapsto 2t^2)_a$ so that for all $(\gamma_a)_{a\in(-1,1)}\in V_\gamma$, for every $a_0\in (-1,1)$, the $Graph\, \gamma_{a_0}$ is a $C^r$-parabola and it holds:
\[ |{\partial_a^i \min\gamma^0(a_0)}|\le \epsilon,\quad \forall 0\le i\le d.\]  
We notice that for every $h=(h_a)_a\in H_\epsilon$, with $h^*\gamma_a^n$ the function whose graph is equal to the image by $h_a$ of $Graph\, \gamma_a^n$, the family $(h^*\gamma_a^n)_a$  satisfies:  
\[ |{\partial_a^i \min h^*\gamma^0(a_0)}|\le 2\epsilon,\quad \forall 0\le i\le d.\] 


\paragraph{Definition of $U_0$ and $U_\epsilon$ for $\epsilon>0$ small enough.} 

\noindent $(a)$ In section \ref{section3.2.3}, we saw that  for every $a$, the local unstable manifold $W^u_{loc} (\underline \delta; \mathring f_{\epsilon\, a})$ is of the form $[-1,1]\times \{y(a)\}$. Hence, by Proposition \ref{cartecool4cpct}, for $U_0$ and then $U_\epsilon$ small enough, for every $(f_a)_a\in U_\epsilon$ for every $\underline \delta\in \Delta_d^\Z$, there exists $(h_a)_a\in H_\epsilon$ so that  $h_a(W^u_{loc} (\underline \delta; f_a))$ is of the form $[-1,1]\times \{y(a)\}$ for every $a$.

Furthermore,  for every  $f'_a=f_a\circ h_a^{-1}$ with $(f_a)_a\in U_\epsilon$ and $(h_a)_a\in H_\epsilon$, we ask that for every vector $v=(v_x,v_y)$ such that $|v_x|\le |v_y|$, for every $\delta\in \Delta_d$, for very $z\in \tilde Y_\delta$, the vector $v'=(v'_a,v'_y):= (D_zf_a')^{-1} (v)$ satisfies 
\begin{equation}\label{cone}\tag{$\mathcal C$} |v'_x|\le |v'_y|\quad \text{and}\quad |v'_y|\ge (4/3) |v_y|.\end{equation}

\noindent $(b)$ We recall that $\tilde Y_{\delta}$ is a small neighborhood of $ Y_{\delta}$ for every $\delta\in \Delta_d$. Let us suppose that
$\tilde Y_{\delta}$ is of the form $\tilde Y_{\delta}= \tilde I_\delta\times(-3/2-\mu ,3/2+\mu)$,  with $\tilde I_\delta$ the $\mu$-neighborhood of $I_\delta$, for $\mu$ small. 

We can suppose $\mu$ small enough so that for every $\delta\in \Delta_d$, the image of $\tilde Y_{\delta}$ by $\mathring f$ contains
 $[-1-4\mu,1+4\mu]\times [-1-2\mu/3, 2/3 +2\mu/3]$ if $\delta(0)=-1$ and $[-1-4\mu,1+4\mu]\times [-2/3-2\mu/3, 1 +2\mu/3]$ if $\delta(0)=1$.

 Then for $U_0$ small enough,  for every $f\in U_0$, the image of $\tilde Y_{\delta}$ by $ f$ contains $[-1-3\mu,1+3\mu]\times [-2/3-\mu/2, 2/3+\mu/2]$.
 For $\epsilon$ small enough and the neighborhood $U_\epsilon$ of $\{(f_{\epsilon\, a})_a:\; f\in U_0\}$ small enough, for every $(f_{a})_a\in U_\epsilon$, every $a\in (-1,1)$, the image of $\tilde Y_{\delta}$ by $ f_{a}$ contains $[-1-2\mu,1+2\mu]\times [-2/3, 2/3]$.
This condition will be important to prove in $(c)$ that  a parabola
which is the graph of a function $\gamma$ on a segment $I\subset [-1-\mu, 1+\mu]$, has its preimage by $f_a|\tilde Y_\delta$ which is a parabola which is the graph of a function $\gamma'$ on a segment $I'\subset [-1-\mu, 1+\mu]$.   

\noindent $(c)$  Let us ask for a third condition on $U_0$ and $U_\epsilon$.

Let us consider any  $C^d$-family $(\gamma_a)_a$ of $C^r$-function $\gamma_a$ on a segment $I_a\subset [-1-\mu, 1+\mu]$, defined for $a$ small, so that:
\[\gamma(0)|\partial I_a\ge 3/2,\quad |\min \gamma(0)|\le 2/3,\quad  |\partial_a^i \min \gamma(0)|\le 2\epsilon \quad \forall 1\le i\le d,\]
and such that if $r\ge 2$, then $\partial_t^2 \gamma \ge 1$. In particular, $Graph\, \gamma_a$ is a parabola.

Let $ \delta=(\delta(i))_i\in \Delta_d$ be so that $\delta(i)$ is of the sign of  $\partial_a^i \min \gamma (0)$. Then  
\[\gamma(0)|\partial I_a-\delta(0)/3\ge 7/6,\quad |\min \gamma(0)-\delta(0)/3|\le 1/3,\]
\[ |\partial_a^i \min \gamma(0) -\epsilon \delta(i)|\le \epsilon\quad \forall 1\le i\le d.\]
As in the restricted case of section \ref{section3.2.3}, for $a$ close to $0$, 
the curve $\mathring f_{a\, \epsilon}^{-1}|\tilde Y_{\delta}(Graph\; \gamma_a)$ is also the graph of a parabola $\gamma_a^{-1}$ over a segment $J_a$ of $[-1-\mu, 1+\mu]$, which satisfies:
 \[\gamma^{-1}(a)|\partial J_a \ge \min(3/2+\mu, \frac{7}6 \frac32)=3/2+\mu,\quad 
|\min \gamma^{-1}({a})|\le \frac{3/3}{2}<2/3,\]
\[|\partial_a^i \min \gamma^{-1}({a})|\le \frac{3\epsilon }{2}< 2\epsilon\;. \]
By $(b)$, we can chose $U_0$ and then $U_\epsilon$ small enough so that for every such a family of parabolas  $(\gamma_a)_a$, for every 
$(f_a)_a\in U_\epsilon$, the curve $f_a^{-1}|\tilde Y_{\delta}(Graph\; \gamma_a)$  is also the graph of a parabola $\gamma'^{-1}_a$ over a segment  $J'_a$ of $[-1-\mu, 1+\mu]$, and such that it satisfies for $a$ small:
\begin{equation} \tag{$\star$ }\gamma'^{-1}_a|\partial J'_a\ge 3/2,\quad 
|\min \gamma'^{-1}({0})|\le 2/3 \quad |\partial_a^i \min \gamma'^{-1}(0)|\le 2\epsilon\;.\end{equation}

Moreover, we can suppose that the latter property $(\star)$ holds as well for every $(f'_a)_a$ of the form $f'_a = f_a\circ h^{-1}_a$, where $f_a\in U_\epsilon$ and $(h_a)_a\in H_\epsilon$.

\paragraph{Induction hypothesis}


Let $(\gamma_a)_a\in V_\gamma$, $(f_a)_a\in U_\epsilon$, and $a\in V_a$. As in the above restricted case, we define by decreasing induction on $n\le 0$, a symbol $\delta_n\in \Delta_d$ and a $C^d$-family of parabolas $(\gamma^n_a)_a$ defined for $t$ on a segment $I_a^n$ of $[-1-\mu, 1+\mu]$ for $a$ small, so that:
\begin{itemize}
\item[$(o)$]  $\gamma^0_a=\gamma_a$. 
\item[$(i)$] For every $(h_a)_a\in H_\epsilon$, the curve $h_a(Graph\, \gamma^n_a)$ is a $C^r$-parabola. 
\item[$(ii)$]  For every $h=(h_a)_a\in H_\epsilon$, with $h^*\gamma_a^n$ the function whose graph is equal to the image by $h_a$ of $Graph\, \gamma_a^n$, the family $(h^*\gamma_a^n)_a$  satisfies: 
\begin{equation}\tag{$\dagger'$} |{ \min\, (h^*\gamma)(0)}|\le 2/3,\quad|{\partial_a^i  \min\, (h^*\gamma)(0)}|\le 2\epsilon,\quad \forall 1\le i\le d.\end{equation}
\item[$(iii)$]  $Graph\, \gamma^{n+1}_a= f_a\left(Y_{m_{-n}}\cap Graph\, \gamma^{n}_a\right)$ and $\gamma^{0}_a=\gamma_a$ for every $a$ small.
\end{itemize}

Let us begin with the step $n=0$. By definition of $V_\gamma$, the function $\gamma^{0}_a=\gamma_a$ satisfies induction hypothesis $(o)$, $(i)$ and $(ii)$ ($(iii)$ does not make sense at this step). 

Let $n\le 0$ and suppose $(\gamma^n_a)_a$ constructed and satisfying $(i)$. Let $\delta_{n-1}(i) $ be the sign of $\partial_a^i \min \gamma^n(0)$ for every $i\le d$ and let $\delta_{n-1}= (\delta_{n-1}(i))_{i=0}^d\in \Delta_d$.

Let $\gamma_a^{n-1}$ be the preimage of $\gamma^n_a$ by $f_a|\tilde Y_{m_{-1}}$. By definition of $U_\epsilon$ and $(c)$, the curve $\gamma^{n-1}_a$ is a parabola, and the family 
$(\gamma^{n-1}_a)_a$ satisfies induction hypothesis $(ii)$.

This finishes the proof of the induction.
\paragraph{Proof of the $C^d$-paratangency}
Chose any $\delta_i\in \Delta_d$ for $i\ge 0$, and put $\underline \delta=(\delta_i)_i$.  Let us show that the local unstable manifold defined by 
\[W^u_{loc}(\underline \delta;f_{a}):= Y_{0}\cap \bigcap_{n\ge 1} f_a^n(\tilde Y_{\delta_{-n}})\]
satisfies that $(W^u(\underline \delta;f_{a}))_a $ is $C^d$-paratangent to $(Graph\, \gamma_a)_a$ at $a=0$.

Let $\sigma\colon \Delta_d^\mathbb Z\to \Delta_d^\Z$ be the shift.

By $(a)$, for every $j\ge 0$, there exists $h^j=(h^j_a)_a\in H_\epsilon$ so that 
$h^j_a(W^u_{loc} (\sigma^{-j}(\underline \delta); f_a))$ is horizontal, that is of the form $[-1,1]\times \{y_j(a)\}$ with $a\mapsto y_j(a)$ of class $C^d$.

By Remark  \ref{Remprametrizationtangent}, the concept of paratangency is invariant by such a coordinates change. Hence it suffices to prove that $(h^0_a W^u_{loc} (\sigma^{-j}(\underline \delta); f_a))_a$ is $C^d$-paratangent to $(h^0_a(Graph \, \gamma_a))_a=(Graph \, h^{0*}\gamma_a)_a$ at $a=0$.

We observe that there exists $\tau_0=(0,\eta_0)$ such that $h^0_{0} W^u_{loc} (\underline \delta; f_{0})+\tau_0$ is tangent to $h^0_0(Graph\, \gamma_{0})$. The tangency is quadratic if $r\ge 2$.

By Fact \ref{prametrizationtangent}, there exists a unique function $a\mapsto \eta(a)$ so that $\eta(0)=\eta_0$ and for every $a$ small, with $\tau(a)= (0,\eta(a))$, the curve $h^0_a W^u_{loc} (\underline \delta; f_{a})+\tau(a)$ is tangent to $h^0_a(Graph\, \gamma_{a})$. Let $Q_a=(q_a, h^{0*}_a\gamma_a(q_a))$ be the tangency point.

For every $a$ small, let $C_a$ be the vertical curve $\{(q_a, h^{0*}_a\gamma_a(q_a)-t)\colon t\in [0, \eta(a)]\}$. The curve $C_a$ has length $|\eta_a|$. 
We notice that there exists $N_a\in \mathbb N$ large when $a$ is small such that $C_a\subset h^{0}_a(Y_0\cap \bigcap_{i=1}^{N_a} f^i_a(\tilde Y_{\delta_{-i}}))$.

For $n\le N_a$, Let $C_a^n:= h^n_a(f^{-n}_a((h^{0}_a)^{-1}( C_a))\cap \bigcap_{i=1}^{n} f^{i-n}_a(\tilde Y_{\delta_{-i}}))$. It is a connected curve. By $(a)$ inequality (\ref{cone}), the tangent space of $C_a^n$ has non zero vertical component at every point. In other words, with $p_y\colon \R^2\to \R$ the second coordinate projection, at every $z\in C_a^n$, we have $p_y(T_z C_a^n)\not =0$.
Moreover, the length of $p_y(C_a^n)$ is at least $(4/3)^n$ times the length of $C_a$:
\[ |p_y(C_a^n)|\ge (4/3)^n |p_y(C_a^n)|= (4/3)^n |C_a| =(4/3)^n |\eta(a)|.\]
On the other hand, $|p_y(C_a^n)|$ is smaller than $4/3$ by $(ii)$, inequality $(\dagger')$. Thus for $a=0$, 
we have $|\eta(0)|\le (3/4)^n 4/3$.
Taking $n\to \infty$, it comes $\eta(0)=\eta_0=0$. In particular,
$W^u_{loc} (\underline \delta; f_{0})$ is tangent to $Graph\, \gamma_{0}$.

By induction on $0\le i\le d-1$, let us assume that $\partial_a^j \eta(0)=0$ for every $j\le i$.
This means that there is a $C^i$-paratangency between $(h_a^0W^u(\underline \delta; f_a))_a$ and $(h_a^0Graph\, \gamma_a)_a$. Hence, by invariance of the $C^i$-paratangency by coordinates change, for every $n\ge 0$,   there exists a $C^i$-paratangency between 
 $(h_a^nW^u(\sigma^{-n}(\underline \delta); f_a))_a$ and $(h_a^{-n} Graph\, \gamma^{-n}_a)_a$. 
 Then, for every $a$ small, by $(\dagger')$, it holds:
\[|p_y(C_a^n)|\le  (2\epsilon +o(1))|a|^{i+1} .\]
Hence by assumption $(c)$, inequality $(\star)$, it holds 
\[|\eta(a)|=|p_y(C_a)|\le  ((2\epsilon +o(1)) (3/4)^n |a|^{i+1} .\]
Taking $n$ large, it comes that $|\eta(a)|=o(|a|^{i+1})$ and so $\partial_a^{i+1}\eta(0)=0$. This finishes the induction and proves the $C^d$-paratangency property.

\section{Creation of sinks near a homoclinic paratangency}\label{section:creasink}
\paragraph{Sinks}
The following result is well known:
\begin{prop}[Newhouse \cite{Newhouse}]
Let $f$ be a local $C^\infty$-diffeomorphism of $\mathbb R^2$. Suppose there exists a hyperbolic fixed point $\Omega$ which has a quadratic homoclinic tangency at $Q$. If $|\det\,Df(\Omega)|<1$ then there exists $f'$ arbitrarily $C^r$-close to $f$ and $M$ arbitrarily large such that $f'$ has a periodic sink close to $Q$ of period $M$.
\end{prop}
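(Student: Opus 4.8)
The plan is to reduce the statement to the elementary fact that the quadratic map $y\mapsto y^2-\tau$ has a hyperbolic attracting fixed point for $\tau$ ranging over a fixed open interval, by renormalizing the first return map around the homoclinic loop. I would first put $f$ in the chart near $\Omega$ furnished by Proposition~\ref{cartecool} (used with $k=0$): near $\Omega=0$ one has $f(x,y)=(\mu x+\cdots,\lambda y+\cdots)$, with the $x$-axis $=W^u_\eta(\Omega)$, the $y$-axis $=W^s_\eta(\Omega)$, $0<|\lambda|<1<|\mu|$, and $|\lambda\mu|=|\det Df(\Omega)|<1$. Replacing $Q$ by an iterate $f^{\pm k}(Q)$ if necessary, I may assume $W^s(\Omega)$ is, near $Q$, a segment of the local stable manifold and $W^u(\Omega)$ is, near $Q$, a $C^r$ curve through $Q$ with a quadratic tangency to it; let $N\ge 1$ be the transition time, so that $f^N$ maps a neighbourhood of a point $P\in W^u_\eta(\Omega)$ onto a neighbourhood of $Q$, carrying the horizontal axis to the parabola-like fold at $Q$. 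I then introduce the auxiliary unfolding $f_t:=T_t\circ f$, where $T_t$ is the translation by $t$ in the ``vertical'' direction, supported in a fixed neighbourhood of $Q$ disjoint from the hyperbolic chart of $\Omega$; thus $f_0=f$, the point $\Omega$ and its eigenvalues are unchanged, each $f_t$ is $C^r$-close to $f$ for $|t|$ small (and still $C^\infty$), and $f_t$ drives the fold transversally across $W^s(\Omega)$.

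The heart of the matter is the renormalization. For $m$ large, let $R_{m,t}$ be the first return map of $f_t$ to a suitably small box $V_m$ around $Q$ (diameter of order $|\mu|^{-m}$), realized by going once around the homoclinic loop with $m$ iterations spent near $\Omega$; it equals $f_t^{\,m+N}$ on the relevant piece of $V_m$. Using the hyperbolic estimates for $f^m$ in the chart of $\Omega$ --- contraction by $\sim\lambda^m$, expansion by $\sim\mu^m$, and Jacobian $\sim(\lambda\mu)^m\to 0$ --- together with the quadratic fold coming from $f^N$, one performs the affine change of coordinates that rescales $V_m$ to unit size; the classical computation of Newhouse (see also Palis--Takens) shows that the rescaled maps converge, in the $C^2$ topology on compact subsets as $m\to\infty$, to an endomorphism of the form
\[(x,y)\longmapsto (y,\ y^2-\tau_m(t)-\sigma_m x),\]
where $\sigma_m\to 0$ (this is exactly where the dissipation $|\lambda\mu|<1$ enters) and $t\mapsto\tau_m(t)$ is affine with slope tending to $+\infty$. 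Since $y\mapsto y^2-\tau$ has a hyperbolic attracting fixed point precisely for $\tau$ in a fixed open interval, the parameters $t$ with $\tau_m(t)$ in that interval form a nonempty interval $J_m$, and a standard further estimate shows that $J_m$ approaches $t=0$ as $m\to\infty$; fix $t_m\in J_m$ with $t_m\to 0$.

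For such $t_m$ and $m$ large, the limiting quadratic endomorphism has a hyperbolic attracting fixed point; by the $C^2$-convergence and the persistence of hyperbolic attracting fixed points, $R_{m,t_m}$ has an attracting fixed point $z_m\in V_m$ --- one eigenvalue of $DR_{m,t_m}(z_m)$ inherited from the one-dimensional attracting dynamics, the other of size $O(\sigma_m)$, both of modulus $<1$. A fixed point of the first return map $R_{m,t_m}=f_{t_m}^{\,m+N}$ is a periodic point of $f_{t_m}$ of period $M:=m+N$ lying near $Q$, and it is a sink. Taking $f':=f_{t_m}$, which is $C^r$-arbitrarily close to $f$ since $t_m\to 0$, and $M=m+N\to\infty$, we are done.

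The step I expect to be the main obstacle is the renormalization convergence together with the control of $J_m$. Two subtleties arise: the chart of Proposition~\ref{cartecool} makes $f$ affine only along the two invariant axes, so the nonlinear remainders near $\Omega$ must be shown to become negligible after rescaling --- they do, since after division by the expansion factor $|\mu|^m$ they are $o(1)$; and one must verify that, although the slopes of $\tau_m$ blow up, the intercepts $\tau_m(0)$ are small enough relative to those slopes that $J_m$ indeed approaches $t=0$, which amounts to the (standard) fact that the saddle-node parameters creating the relevant long periodic orbits converge to the tangency parameter. Since $f\in C^\infty$ there is no loss-of-derivatives issue, and $f'$ is automatically $C^\infty$.
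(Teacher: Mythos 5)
Your argument is correct in substance, but note that the paper does not prove this proposition at all: it is quoted as a known result with a citation to Newhouse, and the only quantitative information the paper records about it is Remark~\ref{pointapuit} (the sink exists for a parameter interval of length $\sim\sigma^{-2M}$ located $\sim\sigma^{-M}$ from the tangency parameter, with a reference to \cite{PT93}). What you have written is essentially the canonical proof from that cited literature: unfold the tangency by a localized vertical translation, renormalize the return map $f_t^{\,m+N}$ in a box of size $\sim|\mu|^{-m}$ around $Q$, use the dissipation $|\lambda\mu|<1$ to kill the second eigenvalue, and land on the quadratic family, whose attracting fixed point persists back to a period-$(m+N)$ sink. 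Your control of $J_m$ is exactly the content of Remark~\ref{pointapuit}, so the two are consistent. Two presentational points you should tighten if you write this out in full: the limit of the rescaled maps should not carry an $m$-dependence --- one reparametrizes $t$ (affinely, with $m$-dependent slope and offset) so that the rescaled returns converge in $C^2$ to the fixed family $(x,y)\mapsto(y,\,y^2-\tau)$, with the $x$-dependence of size $\sigma_m\to0$ absorbed as an error term; and the chart of Proposition~\ref{cartecool} only straightens the two local invariant manifolds, so the negligibility of the nonlinear remainders under $m$ passages near $\Omega$ requires either a $C^2$ linearization (Sternberg, available generically, after an arbitrarily small preliminary perturbation of the eigenvalues) or the bounded-distortion estimates of Palis--Takens rather than the one-line division by $|\mu|^m$ you indicate. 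Neither point affects the validity of the approach; it buys a self-contained proof where the paper is content to cite.
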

\begin{rema}\label{pointapuit} When $(f_a)_a$ is a (smooth) non-degenerate homoclinic unfolding at $a=0$, such a sink exists for a parameter interval of length $\sim \sigma_a^{-2M}$ which is $\sim \sigma_a^{-M}$ close to $0$, with $\sigma_a$ the unstable eigenvalue of $\Omega_a$ (see \cite{PT93}).
\end{rema}
The concept of homoclinic $C^d$-paratangency enables to produce perturbations for which the periodic sink persists for a parameter interval independent to the size of the perturbation.

In this section we deal with $\infty \ge r> d\ge 0$ or  $\infty \ge r=d \ge 2$. 
 \begin{prop}\label{sinkcreation} 
Let $(f_a)_a$ be a $C^d$-family of local $C^r$-diffeomorphisms of $\R^2$, with $r>d\ge 0$ or $r\ge d\ge 2$. Suppose there exists a persistent hyperbolic periodic point $(\Omega_a)_a$ which  has a homoclinic $C^d$-paratangency at the parameter $a=0$ and at a point $Q$. Suppose that $|\det\,Df(\Omega)|<1$ and, with $|\lambda_a|<1<|\sigma_a|$ the eigenvalues of $Df_a(\Omega_a)$, it holds:
\begin{equation}\tag{$\mathcal D_d$}|\lambda_a|\cdot |\sigma_a|^{d-1}<1.\end{equation}
 Then for every $\mu>0$, there exists $\alpha_0>0$, such that for every $\alpha<\alpha_0$, for every $M$ large, there exists $(f'_a)_a$ so that:
\begin{itemize}
\item $(f'_a)_a$ is $\mu$-close to $(f_a)_a$ in the $C^{d,r}$ topology,
\item  $f'_a=f_a$ for every $a\notin (-2\alpha, 2\alpha)^k$,
\item  $(f'_a)_a$ has a periodic sink of period $M$, close to $Q$, for every $a\in (-\alpha,\alpha)^k$.
\end{itemize}
\end{prop}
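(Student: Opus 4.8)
The strategy is to combine the classical Newhouse sink-creation mechanism (Proposition on sinks above) with the degeneracy condition $(\mathcal P)$ provided by the $C^d$-paratangency, arranging the perturbation so that the renormalized return map falls into the region of parameters creating a sink \emph{simultaneously} for all $a\in(-\alpha,\alpha)^k$, and so that the perturbation is localized in parameter space. First I would put everything into the linearizing coordinates of Proposition \ref{cartecool}: writing $\Omega_a=0$, $W^u_\eta(\Omega_a)=(-\eta,\eta)\times\{0\}$, $W^s_\eta(\Omega_a)=\{0\}\times(-\eta,\eta)$, and expressing the homoclinic transition map $f_a^N$ near the tangency point in the normal form $(p_0+x,y)\mapsto(0,q_0)+(A_a(x,y),B_a(x,y))$ satisfying $(\mathcal T)$, $(\mathcal T')$ and $(\mathcal P)$: i.e.\ $\partial_x A_a(c_a,0)=0$ and $C(a):=A_a(c_a,0)$ has vanishing $a$-derivatives up to order $d$ at $a=0$.

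\textbf{Key steps.} (1) Iterate: for $M$ large consider the composition $g_{a,M}:=f_a^N\circ (f_a|_D)^M$ (the $M$-th power of the local linearized dynamics followed by the transition), and apply the standard Hénon-type renormalization: rescaling coordinates by factors $\sim\sigma_a^M$ and $\sim\sigma_a^{2M}$, the return map $g_{a,M}$ restricted to a small box near $Q$ converges, as $M\to\infty$, to a quadratic Hénon-like map $(\xi,\eta)\mapsto(\eta,\,\kappa_a-\eta^2+\cdots)$, where the "center" parameter $\kappa_a$ is an affine function of $\lambda_a^M\sigma_a^M C(a)$ plus lower-order terms, and the Jacobian of the renormalized map is $\sim(\lambda_a\sigma_a)^M\cdot|\det Df_a(\Omega_a)|^{\cdots}$, which goes to $0$ under $|\det Df(\Omega)|<1$ (this is where dissipativity enters; see Proposition \ref{pointapuit}). (2) A periodic sink exists precisely when $\kappa_a$ lies in the parameter window of the Hénon family that carries an attracting fixed point (roughly $\kappa_a\in(-\tfrac14,\tfrac34)$ up to the small corrections), which is an \emph{open interval of definite length}, not shrinking with $M$. (3) Here is the crucial use of the paratangency: because $C(a)=o(|a|^d)$ and, by $(\mathcal D_d)$, $|\lambda_a|\cdot|\sigma_a|^{d-1}<1$, the variation of $\kappa_a$ over $a\in(-\alpha,\alpha)^k$ is controlled by $|\lambda_a|\,|\sigma_a|^M\cdot\sup_{|a|<\alpha}|C(a)| \lesssim |\lambda_a|\,|\sigma_a|^M\cdot o(\alpha^d)$, and choosing $\alpha<\alpha_0$ small and then $M$ large (so that $|\sigma_a|^M$ only needs to compensate $\lambda_a^{-1}$ and a bounded geometric loss, using $|\lambda_a||\sigma_a|^{d-1}<1$ to absorb $\alpha^d$ against $|\sigma_a|^{M}$ for suitable $M$), one keeps $\kappa_a$ inside the sink window uniformly in $a\in(-\alpha,\alpha)^k$. (4) The perturbation $(f'_a)_a$: multiply the transition map by a cutoff in $a$ supported in $(-2\alpha,2\alpha)^k$ and equal to $1$ on $(-\alpha,\alpha)^k$, nudging $C(a)$ by a constant of size $\sim \sigma_a^{-M}$ (tiny) so that for $M$ large it lands exactly in the window; since the added term has $C^{d,r}$-norm $\lesssim \alpha^{-d}\sigma_a^{-M}\to 0$ as $M\to\infty$ (the $\alpha^{-d}$ comes from differentiating the cutoff $d$ times in $a$), this is $\mu$-small, and it vanishes outside $(-2\alpha,2\alpha)^k$ by construction.

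\textbf{The main obstacle.} The delicate point — which is exactly what the excerpt says is "postponed to Claim \ref{unpuit}" — is controlling the $C^{d,r}$-norm of the perturbation, i.e.\ ensuring that the cutoff in the $a$-variable together with the rescaling needed to plant a sink costs only $\sim\alpha^{-d}\sigma_a^{-M}$ and that this tends to $0$ as $M\to\infty$ while $\alpha$ stays fixed; this is where the hypothesis $(\mathcal D_d)$, $|\lambda_a|\cdot|\sigma_a|^{d-1}<1$, is used, because it guarantees that the $d$ parameter-derivatives of the renormalized return map (which pick up factors up to $|\sigma_a|^{M}$ times $a$-derivatives of $C$ of order $\le d$, balanced against the renormalization scale $|\lambda_a|^{-1}|\sigma_a|^{-M}$ and geometric decays) remain bounded uniformly in $M$. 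A secondary technical point is checking that everything is uniform in $a\in(-2\alpha,2\alpha)^k$ — that the linearizing charts, the normal form, the eigenvalues $\lambda_a,\sigma_a$ and the renormalization all depend $C^d$-continuously on $a$ (this follows from Propositions \ref{cartecool}, \ref{cartecool4cpct}), so that $\alpha_0$ and the required $M$ can be chosen independently of the base parameter within the paratangency locus.
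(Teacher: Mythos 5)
There is a genuine gap at the heart of your steps (3)--(4), and it comes from the order of quantifiers in the statement: $\alpha$ is fixed first and then the sink must be produced for \emph{every} sufficiently large $M$, simultaneously for all $a\in(-\alpha,\alpha)^k$. In the renormalization picture the sink window, read in the unrenormalized displacement variable $C(a)$, has width $\sim\sigma_a^{-2M}$ (this is exactly Remark \ref{pointapuit}), hence it shrinks to $0$ as $M\to\infty$, while the paratangency only gives $\sup_{|a|<\alpha}|C(a)|=o(\alpha^d)$, a quantity that is \emph{fixed} once $\alpha$ is fixed and in general nonzero. Nudging $C(a)$ by a constant of size $\sim\sigma_a^{-M}$ therefore cannot place $\kappa_a$ in the window for all $|a|<\alpha$ once $M$ is large; your phrase ``for suitable $M$'' betrays that your balance between $\alpha^d$ and $|\sigma_a|^M$ forces $M\lesssim d\log(1/\alpha)/\log|\sigma|$, which contradicts ``for every $M$ large''. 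The missing idea is the paper's first step (Lemma \ref{robusthomoclinictangency}): one must subtract the \emph{whole function} $a\mapsto C(a)$, cut off by $\rho(a/2\alpha)$, so that the tangency becomes exactly stationary on $(-\alpha,\alpha)^k$; the paratangency condition $(\mathcal P)$ is what makes $\|\phi(\|a\|/2\alpha)C(a)\|_{C^d}=o(1)$ as $\alpha\to 0$, i.e.\ it is used to \emph{cancel} the $a$-variation, not merely to bound it. Moreover, even after this cancellation your uniform-window argument still fails, because the eigenvalues $\sigma_a,\lambda_a$ depend on $a$: the center of the sink window sits at distance $\sim\sigma_a^{-M}$ from the tangency and moves, as $a$ ranges over the box of radius $\alpha$, by a relative amount of order $M\alpha$, which dwarfs the relative window width $\sim\sigma_a^{-M}$ for large $M$; so no single constant shift works uniformly in $a$.

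This is why the paper does not renormalize at all in the second step: it performs a second, $a$-dependent perturbation translating the post-tangency image by $w^n_a\circ B'_a(c'_a,0)$, where $w^n_a$ parametrizes a curve tangent to a $C^d$ invariant line field whose existence is exactly where $(\mathcal D_d)$ enters (via \cite[Prop.~2.10]{MisuRen}); this places the critical orbit on a dynamically defined curve so that the return map $(f_a|D)^n\circ f_a''^N$ has a periodic point for \emph{every} $a\in(-\alpha,\alpha)^k$ and every large $n$, and the sink property is then proved by the direct derivative estimates of Claim \ref{unpuit} and Lemma \ref{deriveepartielle}, using $|\det Df(\Omega)|<1$. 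Your proposal misassigns the roles of the hypotheses ($(\mathcal D_d)$ is not used to ``absorb $\alpha^d$ against $|\sigma_a|^M$'', and dissipativity enters through the contraction estimate, not only through the Hénon Jacobian) and, more importantly, lacks both the function-cancellation step and a mechanism producing the sink uniformly in $a$ for arbitrarily large period; as written the argument does not prove the proposition. There is also a secondary issue you would need to handle in the case $r=d\ge 2$, where $a\mapsto c_a$ is only $C^{d-1}$ (the paper resolves it by an auxiliary $C^{d,r+1}$ approximation of $(A'_a)_a$).
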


To prove this proposition, first we will show the following Lemma:
\begin{lemm}\label{robusthomoclinictangency}
Let $(f_a)_a$ be a $C^d$-family of local $C^r$-diffeomorphisms of $\R^2$. Suppose there exists a persistent hyperbolic periodic point $(\Omega_a)_a$ which  has a homoclinic $C^d$-paratangency at the parameter $a=0$ and the point $Q$. For every $\nu>0$, there exists $\alpha_0>0$, such that for every $\alpha<\alpha_0$, there exists $(f'_a)_a$ so that:
\begin{itemize}
\item $(f'_a)_a$ is $\nu$-close to $(f_a)_a$ in the $C^{d,r}$-topology,
\item  $f'_a=f_a$ for every $a\notin (-2\alpha,2\alpha)^k$,
\item  $(f'_a)_a$ has a homoclinic tangency for every $a\in (-\alpha,\alpha)^k$.
\end{itemize}
\end{lemm}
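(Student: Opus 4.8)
The plan is to use Proposition \ref{cartecool} to linearize near $\Omega_a$, so that we may assume $\Omega_a=0$, $W^u_\eta(\Omega_a)=(-\eta,\eta)\times\{0\}$ and $W^s_\eta(\Omega_a)=\{0\}\times(-\eta,\eta)$ for all $a$, with the transition map $f_a^N|D_P$ in the normal form $(p_0+x,y)\mapsto (0,q_0)+(A_a(x,y),B_a(x,y))$ satisfying $(\mathcal T)$, $(\mathcal T')$ and $(\mathcal P)$ discussed after the definition of homoclinic $C^d$-paratangency. By Fact \ref{prametrizationtangent} the obstruction to a persistent tangency is measured by a single scalar function: with $c_a$ the critical point of $x\mapsto A_a(x,0)$ and $C(a):=A_a(c_a,0)$, the families $(W^s(\Omega_a))_a$ and $(W^u(\Omega_a))_a$ are tangent at the parameter $a$ precisely when $C(a)=0$, and the $C^d$-paratangency hypothesis says $C(0)=\partial_aC(0)=\cdots=\partial_a^dC(0)=0$, i.e. $C(a)=o(\|a\|^d)$ near $0$. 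So the goal is to correct $C$ to be identically zero on $(-\alpha,\alpha)^k$ by a $C^{d,r}$-small, parameter-localized perturbation.

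Next I would realize the correction by composing the original transition map (or equivalently perturbing $f_a$ in a small neighborhood of $Q$ disjoint from the orbit segment and from $\Omega_a$) with a vertical translation: replace $f_a$ by $f'_a:=\psi_a\circ f_a$, where $\psi_a$ is supported near $Q$ and acts near $(0,q_0)$ as $z\mapsto z+(0,\theta(a))$, with $\theta$ chosen so that the new obstruction function vanishes on the cube. Concretely one wants $\theta(a)=-C(a)$ on $(-\alpha,\alpha)^k$, tapered smoothly to $0$ outside $(-2\alpha,2\alpha)^k$ via a bump function $\chi_\alpha$ equal to $1$ on $(-\alpha,\alpha)^k$ and supported in $(-2\alpha,2\alpha)^k$, so set $\theta(a):=-\chi_\alpha(a)\,C(a)$. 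Then $f'_a=f_a$ for $a\notin(-2\alpha,2\alpha)^k$, and on $(-\alpha,\alpha)^k$ the functions $A_a,B_a$ are unchanged in their $x,y$ dependence while the ``height'' $q_0$ is shifted by $C(a)$, killing the obstruction: $\psi_a(W_a)$ meets $\Gamma_a$ tangentially for every $a$ in the small cube, which is exactly a homoclinic tangency of $\Omega_a$.

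The main obstacle — and the only real content — is the $C^{d,r}$-smallness estimate on the perturbation $(f'_a)_a - (f_a)_a$, which reduces to bounding $\|\chi_\alpha\cdot C\|_{C^{d}}$ as a function of $a$ (the $C^r$ size in $z$ is harmless since the translation amount does not depend on $z$). Here is where the paratangency hypothesis pays off: since $C$ is of class $C^d$ (actually $C^r$) with $\partial_a^jC(0)=0$ for all $j\le d$, Taylor's theorem gives $|\partial_a^jC(a)|=o(\|a\|^{d-j})$ for $\|a\|\le 2\alpha$; and the cutoff satisfies $\|\partial_a^j\chi_\alpha\|_\infty=O(\alpha^{-j})$. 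By the Leibniz rule $\|\partial_a^j(\chi_\alpha C)\|_\infty\le\sum_{l\le j}\binom{j}{l}\|\partial_a^{j-l}\chi_\alpha\|_\infty\|\partial_a^lC\|_\infty=\sum_{l\le j}O(\alpha^{-(j-l)})\cdot o(\alpha^{d-l})=o(\alpha^{d-j})$, which for $j\le d$ is $o(1)$ as $\alpha\to 0$; in fact for $j<d$ it tends to $0$ and for $j=d$ it is $o(1)$ because $\partial_a^dC$ is continuous with $\partial_a^dC(0)=0$. Hence for $\alpha$ small enough ($\alpha<\alpha_0=\alpha_0(\nu)$) the perturbation is $\nu$-close to $(f_a)_a$ in the $C^{d,r}$-topology, as required. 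Note this is precisely the mechanism flagged in the text after the Lemma: for a \emph{non-degenerate} unfolding $\partial_aC(0)\ne 0$, so $\|\chi_\alpha C\|_{C^1}$ does not go to $0$ with $\alpha$ and the argument — correctly — fails. $\qed$
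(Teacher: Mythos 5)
Your overall mechanism is exactly the paper's: perturb by a rigid translation whose size is the obstruction function $C(a)$, cut off in the parameter by a bump $\rho(a/2\alpha)$, and get the $C^{d,r}$-smallness from the Leibniz rule together with the Taylor estimate $|\partial_a^j C(a)|=o(\|a\|^{d-j})$ furnished by the paratangency condition $(\mathcal P)$; this is the same computation as in the paper's proof of the lemma (and of Claim \ref{dpeendence}). However, as literally written your perturbation would fail: in the normal form you yourself adopt, $W^s_\eta(\Omega_a)=\{0\}\times(-\eta,\eta)$ is \emph{vertical} and the obstruction $C(a)=A_a(c_a,0)$ is the critical value of the \emph{first} coordinate, i.e. the horizontal offset between the folded image of the unstable manifold and the stable manifold. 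A translation $z\mapsto z+(0,\theta(a))$ moves the image parabola parallel to $W^s$ and does not affect whether a tangency occurs; likewise shifting the ``height'' $q_0$ is irrelevant. The correction must be transverse to $W^s$, i.e. by $(-C(a),0)$: the paper adds $\phi(\|z\|)\,\rho(a/2\alpha)\cdot(-C'(a),0)$. You appear to have imported the convention of Fact \ref{prametrizationtangent} (horizontal curve $\Gamma_a$, vertical shift $\tau(a)=(0,\eta(a))$), whose coordinates are swapped relative to the homoclinic normal form.

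Two lesser points. First, the paper supports the spatial perturbation near $P_a^{N-1}$, which is $\theta$-distant from $D$ and from the rest of the orbit, so only the last step of the transition map is modified and $W^s_\eta(\Omega_a)$, $W^u_\eta(\Omega_a)$ are manifestly untouched; if you support the translation near $Q$ itself (a point of $W^s_{loc}(\Omega_a)$), you must also check that the preimages of the support do not disturb the local invariant manifolds entering the normal form --- doable, but exactly the bookkeeping the paper's choice avoids. Second, your construction of $C$ via the critical point $c_a$ uses quadraticity, hence $r\ge 2$; the case $d=0$, $r=1$ allowed by the lemma is handled separately in the paper (by reparametrizing the parameter so the family is locally constant near $a=0$), and your argument does not cover it.
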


When $d=0$, both Lemma \ref{robusthomoclinictangency} and Proposition \ref{sinkcreation} are easy. First we notice that we can perturb $(f_a)_a$ to a $C^{\infty,\infty}$-family $(f_a')_a$ in the $C^{0,r}$-topology. By genericity, we can assume that a non-degenerate quadratic homoclinic tangency holds for $(f'_a)_a$ at $a=0$. Let $r_\alpha$ be the odd function equal to $0$ on $[-\alpha,\alpha]$, to $2x-2\alpha$ on $[\alpha, 2\alpha]$ and to the identity on $[2\alpha, \infty)$. 
Let $\rho_\alpha\colon (x_i)_{i}\in \R^k\mapsto \prod_i r_\alpha(x_i)\in \R$.
Such a function is $C^0$-close to the identity when $\alpha$ is small. We notice that $(f'_{\rho_\alpha(a)})_a$ satisfies Lemma \ref{robusthomoclinictangency}. Furthermore, by  Remark \ref{pointapuit}, for $M$ large, there exists $a_M\in \R^k$ close to 0 such that $f'_{a_M}$ has a sink of period $M$. Then the family $(f'_{a_M+\rho_\alpha(a)})_a$ satisfies Proposition \ref{sinkcreation}. 

We can now suppose  $ r>d\ge 1$ or $r =  d\ge 2$.

Before proving the lemma, let us set up some data to study on what depends $\alpha_0$ (the so-called technical bounds mentioned in the plan of the proof).

By Proposition \ref{cartecool}, in the coordinates given by  a $C^{d}$-family of $C^r$-charts $(\phi_a)_a$ from a neighborhood of $\Omega_a$ onto  $D:= (-\eta,\eta)^2$, we have for every $a$:
\[\Omega_a=0,\quad W^u_\eta(\Omega_a; f_a)=(-\eta,\eta)\times \{0\},\quad \text{and}\quad W^s_\eta(\Omega_a; f_a)=\{0\}\times (-\eta,\eta).\]

By definition of quadratic tangency, there exists a point $P_0=(p_0,0)\in D$ sent by an iterate $f^N_0$ to a point $Q_0=(0,q_0)\in D$,
and for every $a$ small, there exists a neighborhood $D_P$ of $P$ in $D$ sent by $f_0^N$ into $D$, there exist $(A_a),(B_a)_a$ of class $C^{d,r}$, so that  $f_a^N|D_P$ has the following form:
\[ \phi_a \circ f_a^N\circ \phi_a^{-1}|D_P\colon (p_0+x, y)\in D_P\mapsto  (0, q_0)+ (A_a(x, y), B_a(x, y))\in D\; ,\]      
where $(A_0,B_0)$ is of class $C^r$,  and satisfies at $a=0$:
\begin{equation} \tag{$\mathcal T$} 0=A_0(0)=B_0(0)=\partial_x A_0(0) \quad\text{and} \quad \partial^2_x A_0(0)\not=0.\end{equation}

By $(\mathcal T)$ and Fact \ref{prametrizationtangent}, for $a$ small, there exists   $a\mapsto c_a$ of class $C^{d-1}$ satisfying $c_0=0$ and 
\begin{equation} \tag{$\mathcal T'$} \partial_x A_a(c_a,0)=0, \quad \forall a\; \text{small}.\end{equation}
Moreover the function $C\colon a\mapsto A_a(c_a,0)$ is of class $C^d$. By definition of $C^d$-paratangency, it holds:
 \begin{equation} \tag{$\mathcal P$} 0=C(0)=\partial_a C(0)=\cdots = \partial_a^d C(0).\end{equation}

For every $j\in \{0,\dots, N\}$, let $P_a^j:= f^j_a (P_0)$.
Let $\theta>0$ be so that $P_a^{N-1}$ is $\theta$-distant to both $D$ and $(P_a ^j)_{0\le j\le N-2}$.
Let $(\Psi_a)_a$ be a $C^{d,r}$-family of charts, so that each $\Psi_a$ sends a $\theta$-neighborhood of $P_a^{N-1}$ onto the unit ball of $\R^2$ so that 
 $P_a^{N-1}$ is mapped to $0$ and a local unstable manifold of $P_a^{N-1}$ is mapped onto $(-1,1)\times\{0\}$.
We remark that we have the form:
\[\phi_a \circ f_a\circ \psi_a^{-1}|D_P \colon (x, y)\in D_P\mapsto  (0, q_0)+ (A'_a(x, y), B'_a(x, y))\in D,\]  
with $(A'_a)_a$ and $(B'_a)_a$ of class $C^{d,r}$, satisfying $(\mathcal T)$ at $a=0$. Moreover there exists 
$a\mapsto c_a$ of class $C^{d-1}$ satisfying $c'_0=0$ and $(\mathcal T')$. By definition of $C^d$-paratangency, the map 
  $C'\colon a\mapsto A(c_a',0)$ is of class $C^d$ and satisfies $(\mathcal P)$. 
\begin{defi}[$(\theta,U,\nu )$-$C^{d,r}$-paratangency]
Let $U$ be a bound on the $C^{d,r}$-norms of $(\phi_a)_a$, $(\psi_a)_a$, $(\phi_a^{-1})_a$, $(\psi_a^{-1})_a$,   $(A'_a)_a$ and $(B'_a)_a$. Let $\nu\in C^0(\R)$ be a modulus of continuity  of $\partial_a^d C'(a)$, in other words, it holds: $|\partial^d_a C'(a)|\le \nu(a)$. Then we say that $(\Omega_a)_a$ has a \emph{$(\theta,U,\nu )$-$C^{d,r}$-paratangency}.  
\end{defi}
\begin{Claim}\label{dpeendence} An upper bound of the value of $\alpha_0$ in Lemma \ref{robusthomoclinictangency} and Proposition \ref{sinkcreation}
depends only on $\mu$ and $(\theta, U,\nu)$, and not on $N$.
\end{Claim}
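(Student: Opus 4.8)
The plan is to prove the Claim by inspecting the construction that will establish Lemma \ref{robusthomoclinictangency} and Proposition \ref{sinkcreation}, checking at each step which data the threshold for $\alpha$ must depend on. The guiding principle is that \emph{every perturbation used is supported in the $\theta$-neighborhood of $P_a^{N-1}$}, which by the choice of $\theta$ is disjoint from $D$ and from $P_a^0,\dots,P_a^{N-2}$, and is read in the charts $(\psi_a)_a$, $(\phi_a)_a$, whose $C^{d,r}$-norms are $\le U$. Hence a perturbation of $f_a$ of $C^{d,r}$-size $\sigma$ in $\psi_a$-coordinates changes $(f_a)_a$ by at most $c(U)\,\sigma$ in the $C^{d,r}$-topology for an explicit $c(U)$, and its effect on the return geometry near $Q$ is computed entirely from $(A'_a,B'_a)$, $(\psi_a)_a$, $(\phi_a)_a$ and the eigenvalues $\lambda_a,\sigma_a$ of $\Omega_a$ — none of which involves $N$. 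The integer $N$ enters the construction in only one place: the choice of the (large, free) period $M$ of the created sink, via a constraint of the form $M\gg N$.

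For Lemma \ref{robusthomoclinictangency}, in the $D$-coordinates a homoclinic tangency at the parameter $a$ is exactly the condition $C'(a)=0$, i.e. the critical point of the image of $W^u_{loc}(P_a^{N-1})$ under the last-step map lies on $W^s_\eta(\Omega_a)=\{0\}\times(-\eta,\eta)$. Applying the implicit function theorem together with $(\mathcal T)$--$(\mathcal T')$ provides a $C^d$ function $a\mapsto s(a)$ such that shifting the last step $f_a$ in $\psi_a$-coordinates, localised by a fixed bump $\chi$ in the space variable, by the amount $s(a)$ makes $C'(a)$ vanish; by $(\mathcal P)$ the function $s$ vanishes to order $d$ at $0$, and its $d$-th derivative has a modulus of continuity comparable to $\nu$ with constant depending only on $U$. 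Fixing a standard bump $\beta$ on $\R^k$, equal to $1$ on $(-1,1)^k$ and supported in $(-2,2)^k$, the function $a\mapsto\beta(a/\alpha)\,s(a)$ equals $s(a)$ on $(-\alpha,\alpha)^k$, vanishes off $(-2\alpha,2\alpha)^k$, and — the blow-up $\alpha^{-j}$ of $D^j\beta(\cdot/\alpha)$ being compensated by the vanishing $\|D^{d-\ell}s(a)\|\lesssim\nu(\|a\|)\,\|a\|^{\ell}$ — has $C^d$-norm $\le c_1(U)\,\nu(2\alpha)$. Performing this shift yields $(f'_a)_a$ equal to $(f_a)_a$ off $(-2\alpha,2\alpha)^k$, with a homoclinic tangency for every $a\in(-\alpha,\alpha)^k$, and $c(U)c_1(U)\,\nu(2\alpha)$-close to $(f_a)_a$. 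Taking $\alpha_0$ with $c(U)c_1(U)\,\nu(2\alpha_0)<\mu$ — a bound in $\mu,U,\nu$ alone — gives the $N$-free version of the Lemma.

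For Proposition \ref{sinkcreation}, from the family just produced one applies the classical renormalization of the first return map near $Q$ following Newhouse--Palis--Takens: for a large $M$, a rescaling of coordinates brings this return map $C^r$-close, uniformly as $M\to\infty$, to a H\'enon-like quadratic family whose parameter is affine in $a$ and whose Jacobian tends to $0$ — this is where $|\det Df(\Omega)|<1$ and $(\mathcal D_d)$ are used, to keep the Jacobian and the first $d$ derivatives of the rescaled family small. Such a family has a periodic sink over an open set of its parameter, and a further perturbation of $f'_a$ near $P_a^{N-1}$, of $C^{d,r}$-size at most $c(U)$ times a bound depending only on $\theta,U,\lambda_a,\sigma_a$, forces that parameter into the sink region for every $a\in(-\alpha,\alpha)^k$; this is precisely the estimate postponed to Claim \ref{unpuit}. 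Since the rescaling makes the intermediate transition $f_a^{N-1}$ from the neighborhood of $P_0$ to that of $P_a^{N-1}$ visible only through $\sigma_a$ and the $U$-bounded charts — its higher-order distortion absorbed into the $C^r$-smallness of the rescaled return map by shrinking the pre-image neighborhood of $P_0$ with $M$ — all the bounds are free of $N$. Hence the $\alpha_0$ of Proposition \ref{sinkcreation} also depends only on $\mu$ and $(\theta,U,\nu)$.

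The main obstacle is exactly this renormalization step: one must be sure the convergence and the contraction estimates for the rescaled return map are genuinely independent of $N$, that is, that the distortion accumulated along the $N-1$ iterates joining $P_0$ to $P_a^{N-1}$ does not leak into the thresholds. Making this precise — carrying out the rescaling so that this journey is seen only through $\sigma_a$ and the two $U$-bounded charts — is the substance of Claim \ref{unpuit}, and everything else above is bookkeeping of the local data $(\theta,U,\nu)$ and $\mu$.
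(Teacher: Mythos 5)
Your treatment of Lemma \ref{robusthomoclinictangency} coincides with the paper's: the perturbation is the bump-localized translation by $-C'(a)$ read in the charts $\psi_a,\phi_a$, and the decisive estimate is exactly your compensation of the $\alpha^{-i}$ blow-up of the parameter bump by the order-$d$ vanishing of $C'$ quantified by $\nu$ (in the paper, $|D^{j}(\phi(\|a\|/2\alpha)C'(a))|\le \sum_i \alpha^{-i}\,o(|\alpha|^{d-j+i})=o(|\alpha|^{d-j})$), so for that half the threshold $\alpha_0$ indeed depends only on $\mu$ and $(\theta,U,\nu)$, and your bookkeeping is the paper's.

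The gap is in the half concerning Proposition \ref{sinkcreation}. The paper does not renormalize to a H\'enon-like family: it performs a second explicit perturbation which translates the image of the critical point onto a curve $W^n_a$ tangent to an invariant line field $e_a$ through a point $p^n_a$ of $W^u(\Omega_a;f_a)$, and then Claim \ref{unpuit} together with Lemma \ref{deriveepartielle} verifies directly, in skew-product coordinates and using the volume contraction, that $\phi_{a\,n}=(f_a|D)^n\circ f_a''^{N}|D_p$ contracts a box, giving a sink of period $N+n$ for every $a\in(-\alpha,\alpha)^k$. The reason for this detour is the very point your sketch glosses over: to get a sink for \emph{every} $a$ in a box of size $\alpha$ independent of the perturbation size, the second perturbation must depend on $a$ (the sink window sits at distance $\sim\sigma_a^{-n}$ from the tangency and has width $\sim\sigma_a^{-2n}$, so no $a$-independent offset can serve all parameters, and the renormalized parameter is not ``affine in $a$'' --- after the first perturbation the tangency offset vanishes identically on $(-\alpha,\alpha)^k$), and the $C^d$-norm in $a$ of this $a$-dependent offset, once multiplied by $\rho(a/2\alpha)$ with its $\alpha^{-i}$ factors, must be shown small. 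The paper obtains this $C^d$-in-$a$ control from the line field being of class $C^d$, which is precisely where $(\mathcal D_d)$ and the reference to \cite{MisuRen} enter; in your proposal $(\mathcal D_d)$ is invoked only to keep the Jacobian small, the $a$-uniformity and the $C^d$-in-$a$ estimate of the second perturbation are not addressed, and the missing step is attributed to Claim \ref{unpuit}, which in the paper is not a renormalization statement but the direct sink verification. Your overall conclusion about $\alpha_0$ (only the first perturbation constrains $\alpha$; the second is absorbed by taking the period large) is the correct mechanism and matches the paper, but as written your route to Proposition \ref{sinkcreation} leaves its key uniformity unproved.
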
 
We remark that this claim holds true in the case $d=0$. Now we study the remaining case $ r>d\ge 1$ or $r= d\ge 2$. 
\begin{proof}[Proof of Lemma \ref{robusthomoclinictangency} and Claim \ref{dpeendence}]
Let $\phi\in C^\infty(\R,[0,1])$ be a function with support in $[-1,1]$ and equal to $1$ on  $[-1/2,1/2]$.
Let $\rho \colon (x_i)_i\in \R^k\mapsto \prod_i \phi(x_i) \in \R$.
 For $\alpha>0$ and $a\in \R^k$, we consider the perturbation $f_a'$ of $f_a$ equal to $f_a$ outside of $B(P_a^{N-1},\theta)$ or $a\notin (-2\alpha,2\alpha) ^k$, and otherise whose restriction to $B(P_a^{N-1},\theta)$ is:
\[\phi_a \circ f_a'\circ \psi_a^{-1}(z)=\phi_a \circ f_a\circ \psi_a^{-1}(z)+ \phi(\|z\|) \rho(\frac {a}{2\alpha})\cdot (-C'(a),0).\]
It is easy to see that  $(f'_a)_a$ has a homoclinic tangency for every $a\in (-\alpha,\alpha)^k$. Furthermore $(f_a')_a$ is a $C^{d}$-family of $C^r$-local diffeomorphisms.

 Let us show that the family $(f_a')_a$ is close to $(f_a)_a$ for the $C^{d,r}$-topology. For this end, it suffices to show that the function $\rho_\alpha:=a\mapsto \phi(\|a\|/2\alpha)C'(a)$ is of  $C^d$-norm small when $\alpha$ is small. Indeed, for all $a\in (-2\alpha,2\alpha)^k$, $\forall i\le d$, it holds: 
\[ |D^k\rho_\alpha(a)|= |\sum_{i=0}^k C_k^i (2\alpha)^{-i} (D^i\phi)(\|a\|/2\alpha) D^{k-i}C'(a)|= \sum_{i=0}^k \alpha^{-i} o(|\alpha|^{d-k+i})=o(|\alpha|^{d-k})
.\]  
\end{proof}
\begin{proof}[Proof of Proposition \ref{sinkcreation}] Since the case $d=0$ is already done, we assume $r>d>0$ or $r=d\ge 2$.
To prove this Proposition, we continue with the notations of the above Lemma. We can suppose that for the charts defined above, we have the form:
\[\phi_a \circ f'_a\circ \psi_a^{-1}|D_P \colon (x, y)\in D_P\mapsto  (0, q_0)+ (A'_a(x, y), B'_a(x, y))\in D,\]  
with $(A'_a)_a$ and $(B'_a)_a$ of class $C^{d,r}$ satisfying $(\mathcal T)$ at $a=0$, 
and existence of $a\mapsto c'_a$ of class $C^{\min(r-1,d)}$, with $c'_0=0$, and 
\begin{equation} \tag{$\mathcal P_\infty$} 0=A'_a(c'_a,0)=\partial_x A'_a(c'_a,0), \quad \forall a\in (-\alpha,\alpha)^k.\end{equation}
 
We notice that if $r>d$, the map $a\mapsto c'_a$ is of class $C^d$. Nevertheless, when $r=d\ge 2$, the map $a\mapsto c'_a$ is of class $C^{d-1}$. This causes  an extra difficulty in the proof. Hence, we assume now that $r>d$, and so that $a\mapsto c'_a$ is of class $C^d$, and at the end of the proof, we give a trick to go back to this case.

Let us now do a special perturbation, for which it will appear a sink. For this end, we are going to define a family of curves. 

By \cite[Prop. 2.10]{MisuRen} and $(\mathcal D_d)$, for $\eta>0$ small enough, there exists a line field $(a,z)\in (-1,1)\times D\mapsto e_a(z)\in P\mathbb R$ of class $C^{d}$ so that for every $a\in (-1,1)^k$ and $z\in D\cap f'^{-1}_a(D)$, the differential $D_zf'_a$ sends $e_a(z)$ onto $e_a(f'_a(z))$. Moreover $e_a$ is tangent to $W^s_\eta(\Omega; f'_a)$.

We recall that $(f_a^{N-1}|D_P)^{-1}\psi_a^{-1} (c'_a,0)$ belongs to $W^u_\eta(\Omega_a; f_a)$. It is sent by $\phi_a$ to a point $(p_a,0)$ close to $(p_0,0)$. For $n\ge 0$, we put $(p_a^n,0):= \phi_a \circ (f_a|D)^{-n}\circ \phi_a^{-1}(p_a,0)$.
The curve $(p_a^n)_a$ is of class $C^d$. By inclination Lemma \ref{inclination} applied to $(f_a|W^u_\eta(\Omega_a; f_a))^{-1}$, the curve $(p_a^n)_a$ is $C^d$-close to $(\Omega_a)_a$ when $n$ is large. 

For every $a$, let $W^n_a$ be the curve of $D$ which is tangent to $e_a$ and contains $(p^n_a,0)$. Let $w^n_a\in C^{d}((\eta,\eta),\R)$ be such that:
\[W^n_a=\{(w^n_a(y),y)\colon y\in (-\eta,\eta)\}.\]

Also we notice that $\hat W^n:=\sqcup_a \{a\}\times  W^n_a$ is of class $C^d$: it is given by integrating in $\R^3$  the $C^{d}$-vector field $(a,z)\mapsto (0, e_a(z))$ along  the graph of the $C^d$-function $a\mapsto p^n_a$. Hence the map $(a,y)\mapsto w^n_a(y)$ is of class $C^d$. As $e_a|\{0\}\times (-\eta,\eta)= \{0\}\times \R$ and $(p_a^n)_a$ is $C^d$-close to $0$, the map $(a,y)\mapsto w^n_a(y)$ is $C^d$-small for $n$ large.

Consequently, the function $a\mapsto w^i_a\circ B'_a(c'_a,0)$ is of class $C^d$, with norm small for $i$ large.
 
For $n$ large, we consider the perturbation $(f''_a)_a$ of $(f_a)_a$ with $f''_a$ equal to $f_a$ outside of $B(P_a^{N-1},\theta)$ or $a\notin (-2\alpha,2\alpha)^k$, and whose restriction to $B(P_a^{N-1},\theta)$ is:
\[\phi_a \circ f_a''\circ \psi_a^{-1}(z)=\phi_a \circ f'_a\circ \psi_a^{-1}(z)+\phi(\|z\|) \rho(\frac {a}{2\alpha})\cdot (w^n_a\circ B'_a(c'_a,0),0),\]
with $\phi$ and $\rho$ defined in the proof of Lemma \ref{robusthomoclinictangency}.

Likewise $(f_a'')_a$ is of class $C^{d,r}$, and for $n$ large, it is close to $(f_a')_a$ (and so to $(f_a)_a$) for the $C^{d,r}$-topology.
We remark that $f''_a|D= f'_a|D=f_a|D$ for every $a$.

Now the map $(f''_a)$ does not satisfy ($\mathcal T$). But if $n$ is large enough, for every $a\in(-\alpha,\alpha)^k$, $f_a''$ has a sink of period $N+n$ by the following claim proved below:
\begin{Claim} \label{unpuit} The following map, well defined on a neighborhood $D_p'$ of $P_a$ for every $a\in [-\alpha,\alpha]^k$, has a sink :
\[\phi_{a\,n}:=(f_a|D)^n\circ f_a''^{N}|D_p= (f_a''|D)^n\circ f''^N_a|D_p.\]
\end{Claim}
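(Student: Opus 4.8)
The plan is to analyze the composed map $\phi_{a\,n} = (f_a|D)^n \circ f_a''^N|_{D_p}$ as a perturbation of a classical Hénon-like return map and to exhibit, for every $a \in [-\alpha,\alpha]^k$ and $n$ large, a fixed point of $\phi_{a\,n}$ at which the differential is a contraction. First I would write down the explicit form of the map in the coordinates $\phi_a$. We have $\phi_a \circ f_a''^N \circ \psi_a^{-1}$ given on $D_P$ by $(x,y) \mapsto (0,q_0) + (A''_a(x,y), B''_a(x,y))$ where, by construction of $f_a''$ near $P_a^{N-1}$, $A''_a(x,y) = A'_a(x,y) - \phi(\|z\|)\rho(a/2\alpha)\,w^n_a\circ B'_a(c'_a,0)$ (with $z$ the preimage point); near the relevant critical point this satisfies $A''_a(c'_a,0) = w^n_a(B'_a(c'_a,0)) + (\text{terms that vanish to order }d\text{ in }a)$... wait — more precisely the key point is that by $(\mathcal P_\infty)$ the first perturbation $f'_a$ killed the $x$-component $A'_a(c'_a,0)$ identically for $a \in (-\alpha,\alpha)^k$, so after the second perturbation the image of the critical point $(c'_a,0)$ under $f_a''^N$ is $(0,q_0) + (w^n_a(B'_a(c'_a,0)), B'_a(c'_a,0))$, which is exactly the point $(w^n_a(q'_a), q'_a)$ lying on the curve $W^n_a$, where $q'_a := q_0 + B'_a(c'_a,0)$.

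Next I would track what $(f_a|D)^n$ does. By Proposition \ref{cartecool} the map $f_a|D$ is, in these coordinates, a hyperbolic linear-like map with stable direction $\{0\}\times\R$ (eigenvalue $\lambda_a$) and unstable direction $\R\times\{0\}$ (eigenvalue $\sigma_a$), up to higher order terms; the point $(p^n_a,0) = (f_a|D)^{-n}\circ(p_a,0)$ sits on $W^u$ and tends to $\Omega_a = 0$ as $n\to\infty$, while the curve $W^n_a$ through it — tangent to the invariant line field $e_a$ — is mapped by $(f_a|D)^n$ onto a curve through $(p_a,0)$ tangent to $e_a(p_a)$, i.e. roughly onto a vertical-ish curve through $P_a$. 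The composition $\phi_{a\,n}$ therefore sends a neighborhood of the critical point $(c'_a,0)$ of the $A$-coordinate into a thin neighborhood of $P_a$. The fixed point is located by a contraction-mapping / Brouwer argument: define the rectangle $R_a^n$ around $(p_a,0)$ of horizontal size $\sim \sigma_a^{-n}$ and vertical size controlled by the image of the parabolic fold, show $\phi_{a\,n}(R_a^n) \subset R_a^n$ using that $(f_a|D)^n$ contracts horizontal extent by $\sigma_a^{-n}$ onto (and the quadratic fold of $A''_a$ keeps the image bounded), and that it maps the vertical extent into itself because $w^n_a$ is $C^d$-small and $(f_a|D)^n$ expands the vertical direction by at most... no — here the point is subtler: the fold direction and the expansion interact, and one uses that the perturbation term $w^n_a\circ B'_a(c'_a,0)$ was chosen precisely so that the image of the critical point lands on $W^n_a$, which is the curve that $(f_a|D)^n$ maps back onto a curve through $P_a$ tangent to the stable cone, so the second iterate stays put.

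For the derivative estimate: at the fixed point $Z_a \in R_a^n$, $D\phi_{a\,n} = D(f_a|D)^n \cdot Df_a''^N$. The first factor has norm $\lesssim |\sigma_a|^n$ on the unstable cone and $\lesssim |\lambda_a|^n$ on the stable cone; the second factor, restricted near the quadratic critical point, contracts the $x$-direction (since $\partial_x A''_a \approx \partial_x A'_a$ vanishes at $c'_a$) and has a bounded $y$-derivative $\partial_y B'_a$ of order $|\lambda_a|^N$ or so... The cleanest route is to invoke the standard fact (as in Newhouse's argument, Prop.\ \ref{pointapuit}, and \cite{PT93}) that the return map near a quadratic homoclinic tangency of a dissipative saddle, after the renormalization bringing it to Hénon form, has a sink whenever the rescaled parameter lies in the appropriate interval; here the perturbation $f_a''$ was constructed so that the rescaled parameter equals $0$ (the bottom of the parabola is aligned) for \emph{every} $a \in (-\alpha,\alpha)^k$, hence the sink persists over the whole cube. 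The essential new input — and the main obstacle — is showing this alignment is robust in the parameter $a$ simultaneously with the $C^d$-smallness of the perturbation: this is exactly where $(\mathcal D_d)$, i.e. $|\lambda_a||\sigma_a|^{d-1}<1$, enters, since the $C^d$-size of the correction term $w^n_a\circ B'_a(c'_a,0)$ involves $d$ derivatives in $a$ of a quantity that, after the inclination-Lemma rescaling, carries a factor growing like $|\sigma_a|^{(d-1)n}$ against the contraction $|\lambda_a|^n$ from the strongly dissipative dynamics; I would isolate this estimate as the crux and verify that $(\mathcal D_d)$ makes it $o(1)$ as $n\to\infty$, uniformly in $a$, thanks to the uniform technical bounds $(\theta,U,\nu)$ from Claim \ref{dpeendence}. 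The remaining difficulty ($r=d\ge 2$, where $a\mapsto c'_a$ is only $C^{d-1}$) is handled by the "trick" promised at the end of the proof of Proposition \ref{sinkcreation}, which I would defer.
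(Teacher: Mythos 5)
Your global picture is the right one: the second perturbation aligns the image of the critical point with the curve $W^n_a$ tangent to the invariant line field, so after $n$ more iterates it returns over $P_a$, and the sink should come from a fixed-point/contraction argument in a thin region near $P_a$. But the heart of the Claim is exactly the quantitative step you leave out: proving that $\phi_{a\,n}$ maps a suitable region into itself and that its differential is a contraction there, uniformly in $a\in[-\alpha,\alpha]^k$, for all large $n$. Your derivative discussion trails off, and as sketched it would not close: composing $Df''^N_a$ with $D(f_a|D)^n$ produces an off-diagonal entry of size roughly $|\sigma_a|^{n}$ (the unstable expansion acting on the $y$-dependence of $A''_a$), so the Euclidean differential of $\phi_{a\,n}$ is \emph{not} small. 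The paper's proof handles this by passing to flow-box coordinates for the line field $e_a$, in which $f_a|D$ is a skew product $(x,y)\mapsto (g_a(x),h_{a\,x}(y))$, exploiting the quadratic degeneracy $\partial_x A''_a(0)=0$ on an anisotropic box of size $|\sigma_a'|^{-n}\times|\sigma_a'|^{-3n}$ (Lemma \ref{deriveepartielle}), and then measuring the differential in the adapted norm $N(u,v)=|u|+|\sigma_a'^{\,n}v|$, whose subordinate norm is at most $C\kappa^{-n}\bigl(1+Cn|\sigma'_a\lambda'_a|^{n}\bigr)$, small because $|\sigma'_a\lambda'_a|<1$; invariance of the box then gives an attracting fixed point, i.e.\ a sink of period $N+n$. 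Appealing instead to ``the standard renormalization fact'' of Newhouse/Palis--Takens does not substitute for this: that statement concerns a generic unfolding and yields sinks only for special parameter values or tiny intervals (cf.\ Remark \ref{pointapuit}), whereas here one must exhibit a sink for the fixed maps $f''_a$ themselves, for every $a$ in the cube and every large $n$, with no rescaling or parameter selection available.

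A second, related misplacement: you make $(\mathcal D_d)$ the crux of the Claim, via the $C^d$-in-$a$ size of $w^n_a\circ B'_a(c'_a,0)$. In the paper that issue is settled \emph{before} the Claim, in the proof of Proposition \ref{sinkcreation}: $(\mathcal D_d)$ is used, through \cite{MisuRen}, to produce the invariant line field $e_a$ of class $C^{d}$ jointly in $(a,z)$, and the inclination Lemma \ref{inclination} then makes $(a,y)\mapsto w^n_a(y)$ $C^d$-small for $n$ large, so that $(f''_a)_a$ is an admissible $C^{d,r}$-perturbation. The Claim itself is a statement about the already-constructed family, and its proof uses only the area contraction $|\lambda_a\sigma_a|<1$ (through $|\sigma'_a\lambda'_a|<1$), not $(\mathcal D_d)$. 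Deferring the $r=d\ge 2$ trick is fine, since the paper also treats it outside the Claim; but as it stands your argument for the Claim has a genuine gap where the contraction estimate should be.
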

This finishes the proof of the Proposition in the case $r>d\ge 1$.

Now let us treat the case $r=d\ge 2$. For this end, by density of the $C^{d,r+1}$-families in the $C^{d,r}$-topology, there exists 
a $C^{d,r+1}$-family $(\bar A_a)_a$ which is  $C^{d,r}$-close to $(A'_a)_a$. The critical point $c_a'$ of $x\mapsto A_a(x,0)$ persists as a critical point $\bar c_a$ of $A_a$.
We observe that $a\mapsto \bar c_a$ is $C^{d-1}$-close to $a\mapsto c'_a$. Hence $\bar C(a)=\bar A_a(\bar c_a,0)$ has its derivative $\partial_a \bar A_a(\bar c_a,0)\cdot \partial_a c_a $ which is $C^{d-1}$-close to $\partial_x C'=0$. Consequently, the function $\bar C$ is $C^d$-close to $0$ when $(\bar A_a)_a$ is close to $(A'_a)_a$. Therefore, the family $(\bar A'_a)_a$ defined by  $\bar A'_a= \bar A_a-\bar C(a)$ is of class $C^{d,r+1}$, is $C^{d,r}$-close to $(A'_a)_a$ and satisfies $(\mathcal P_\infty)$ at $(\bar c'_a,0)$. Thus we can apply the above argument with the map $\bar f'$ defined by
\[\phi_a \circ \bar f'_a\circ \psi_a^{-1}|D_P \colon (x, y)\in D_P\mapsto  (0, q_0)+ (\bar A'_a(x, y), B'_a(x, y))\in D.\]
Note that  it is not a problem to patch $\bar f'$ to $f'$ outside of $D_P$.  

Nonetheless, the  function $a\mapsto \bar c_a$ has $C^d$-norm a priori large, the line field as well, and so they define a the function $(a,y)\mapsto w^0_a(y)$ of a priori the $C^d$-norm large. This is actually not a problem, since the $C^d$-norm of $(a,y)\mapsto w^n_a(y)$ remains small for $n$ large.

\end{proof}
Before proving this Claim, let us generalize this proposition to a more general context which will appear in the proof of Theorem \ref{theo1}. 

Let  $r> d\ge 0$ or $r=d\ge 2$, let $M$ be a surface and let  $(\Omega_a)_a$ be a curve of hyperbolic fixed points for a $C^{d}$-family $(f_a)_a$ of $C^r$-local diffeomorphisms of $M$.
Let us suppose that there exists a sequence of points $P_{j\, a}\in W^u(\Omega_a; f_a)$ endowed with local unstable manifold $W^u_{\loc}(P_{j\, a}; f_a)$ so that the family $((W^u_{\loc}(P_{j\, a}; f_a))_a)_j$ converges in the $C^{d,r}$-topology to a certain $(W^u_{\loc}(P_{\infty\, a}; f_a))_j$. 
\begin{defi} The fixed point $(\Omega_a)_a$ has a \emph{quasi-homoclinic $C^d$-paratangency} if  $(W^u_{\loc}(P_{\infty\, a}; f_a))_a$ is $C^d$-paratangent to $(W^s(\Omega_a; f_a)_a)_a$. 
\end{defi}
Likewise we can define the concept of $(\theta, U,\nu)$-quasi-homoclinic $C^{d,r}$-paratangency. Again, seen via a $C^{d}$-family of $C^r$-charts $(\phi_a)_a$ from a neighborhood of $\Omega_a$ onto  $D:= (-\eta,\eta)^2$, we can assume that for every $a$:
\[\Omega_a=0,\quad W^u_\eta(\Omega_a; f_a)=(-\eta,\eta)\times \{0\},\quad \text{and}\quad W^s_\eta(\Omega_a; f_a)=\{0\}\times (-\eta,\eta).\]

By replacing $(P_{j\, a})_j$ and $(W^u_{\loc}(P_{j\, a}; f_a))_j$ by their images by an iterate $f^k_a$, we can suppose that $(f_a(W^u_{\loc}(P_{\infty\, a}; f_a)))_a$ is $C^d$-paratangent to $(W^s_\eta(\Omega_a; f_a))_a$ at $a=0$. 


Let $(\psi_a)_a$ be a $C^d$-family of $C^r$-charts so that each $\psi_a$ sends a $\theta$-neighborhood of $P_{\infty\, a}$ onto the unit ball of $\R^2$, maps $W^u_{\loc}(P_{\infty\, a}; f_a)$ onto $(-1,1)\times\{0\}$ and $P_{\infty\, a}$ to $0$. We remark that we have the form:
\[\phi_a \circ f_a\circ \psi_a^{-1}|D_P \colon (x, y)\in D_P\mapsto  (0, q_0)+ (A_a(x, y), B_a(x, y))\in D,\]  
with $(A_a)_a$ and $(B_a)_a$ of class $C^{d,r}$ which satisfy, $A_0(0)=\partial_x A_0(0)=0$ if $r\ge 1$. If $r\ge 2$, these families of functions satisfy moreover condition $(\mathcal T)$. 
Moreover there exists  $a\mapsto c_a$ of class $C^{d-1}$ satisfying $c_0=0$ and $(\mathcal T')$. By definition of $C^d$-paratangency, the map   $C\colon a\mapsto A(c_a,0)$ of class $C^d$ and satisfies $(\mathcal P)$.

For every $P_{j\, a}$, let $(P_{j\, a}^k)_{k\le 0}$ be the preorbit of $P_{j\, a}$ which defines $W^u_{\loc}(P_{j\, a}; f_a)$:
\begin{itemize} 
\item  $ P_{j\, a}^0= P_{j\, a}$ and for $k<0$, $f_a(P_{j\, a}^k)=  P_{j\, a}^{k+1}$,
\item  for $k<0$ large, $P_{j\, a}^k$ belongs to $W^u_\eta(\Omega_a; f_a)$, and $W^u_{\loc}(P_{j\, a}; f_a)$ is the image by $f_a^{-k}$ of a segment of $W^u_\eta(\Omega_a; f_a)$ containing $P_{j\, a}^k$.
\end{itemize}
 
\begin{defi}[$(\theta, U,\nu)$-quasi-homoclinic $C^{d}$-paratangency]
Suppose that $P_{\infty\, a}$ is $\theta$-distant to $\{f_0^k(P_{\infty\, 0})\colon  k\ge 1\}\cup \bigcup_j \{P_{j\, a}^k\colon k<0\}$ for every $a$.   

If $U$ is a bound on the $C^{d,r}$-norms of $(\phi_a)_a$, $(\psi_a)_a$, $(\phi_a^{-1})_a$, $(\psi_a^{-1})_a$,   $(A'_a)_a$ and $(B'_a)_a$. Let $\nu\in C^0(\R)$ be the  modulus of continuity  of $\partial_a^d C$ at $a=0$. We say  that $(\Omega_a)_a$ has a \emph{$(\theta, U,\nu)$-$C^{d,r}$-paratangency}.  
\end{defi}

\begin{prop}\label{quasisinkcreation}
If $(\Omega_a)_a$ has a $(\theta, U,\nu)$-quasi-homoclinic $C^{d}$-paratangency, and $Df_a(\Omega)$ satisfies $(\mathcal D_d)$ and  $|\det\,Df_a(\Omega)|<1$, then 
 for every $\mu>0$, there exists $\alpha_0>0$, depending only on $(\theta, U,\nu)$, so that for every $\alpha<\alpha_0$, for every $M$ large, there exists $(f'_a)_a$ satisfying:
\begin{itemize}
\item $(f'_a)_a$ is $\mu$-$C^{d,r}$-close to $(f_a)_a$,
\item  $f'_a=f_a$ for every $|a|>2\alpha$,
\item  $(f'_a)_a$ has a periodic sink of period $M$, close to $P_{\infty\, a}$, for every $a\in [-\eta,\eta]$.
\end{itemize}
\end{prop}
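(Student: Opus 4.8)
The plan is to reduce Proposition~\ref{quasisinkcreation} to Proposition~\ref{sinkcreation}. First I would promote the quasi‑homoclinic $C^d$‑paratangency of $(\Omega_a)_a$ to a genuine homoclinic $C^d$‑paratangency, by a perturbation supported in a neighborhood of $P_{\infty\,a}$ contained in the $\theta$‑neighborhood of the $(\theta,U,\nu)$‑data; by the $\theta$‑distance hypothesis this neighborhood is disjoint from $\Omega_a$, from $W^s_\eta(\Omega_a;f_a)$, from the forward orbit $\{f_0^k(P_{\infty\,0})\colon k\ge1\}$ and from every preorbit $\{P_{j\,a}^k\colon k<0\}$, so the perturbation acts only on the last step of the homoclinic excursion, exactly as in Lemma~\ref{robusthomoclinictangency}. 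After the reduction made just before the statement, I would work in the $C^{d,r}$‑charts $(\phi_a)_a$, $(\psi_a)_a$ in which $\phi_a\circ f_a\circ\psi_a^{-1}$ has the form $(x,y)\mapsto(0,q_0)+(A_a(x,y),B_a(x,y))$, and write $C\colon a\mapsto A_a(c_a,0)$ for the $C^d$ defect of $f_a(W^u_\loc(P_{\infty\,a};f_a))$ relative to $W^s_\eta(\Omega_a;f_a)$; by definition of quasi‑homoclinic $C^d$‑paratangency $C$ satisfies $(\mathcal P)$, so $\partial_a^dC(0)=0$ and $|\partial_a^dC|\le\nu$. For each $j$ the curve $f_a(W^u_\loc(P_{j\,a};f_a))$ is a genuine piece of $W^u(\Omega_a;f_a)$, and by Fact~\ref{prametrizationtangent} it carries a defect function $C_j$ of class $C^d$; since $W^u_\loc(P_{j\,a};f_a)\to W^u_\loc(P_{\infty\,a};f_a)$ in $C^{d,r}$ I get $C_j\to C$ in $C^d$ (and, for $r\ge2$, the quadratic tangency $\partial_x^2A_0(0)\neq0$ passes to level $j$ for $j$ large).

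The delicate point is the order of quantifiers. Fix $\mu>0$. I would first, using Claim~\ref{dpeendence}, let $\alpha_0$ be the threshold of Proposition~\ref{sinkcreation} for error $\mu/2$ and technical bounds $(\theta,U+1,\nu+1)$ — so $\alpha_0$ depends only on $\mu$ and $(\theta,U,\nu)$. Only then fix $\alpha<\alpha_0$, take $\chi,\rho$ as in the proof of Lemma~\ref{robusthomoclinictangency}, and choose $j$ so large that
\[\bigl\|\,\chi(\|\cdot\|)\,\rho(\cdot/2\alpha)\,(C_j-C)\,\bigr\|_{C^{d,r}}\le\mu/2 .\]
This is possible because the Leibniz estimate of the proof of Claim~\ref{dpeendence} bounds this norm by $\kappa_{d,r}\,\alpha^{-d}\,\|C_j-C\|_{C^d}$, and $\|C_j-C\|_{C^d}\to0$; as $\alpha$ is already fixed, there is no circularity. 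Then I would set $g_a=f_a$ for $a\notin(-2\alpha,2\alpha)^k$ or outside $\psi_a^{-1}(\{\|z\|\le1\})$, and otherwise
\[\phi_a\circ g_a\circ\psi_a^{-1}(z)=\phi_a\circ f_a\circ\psi_a^{-1}(z)-\chi(\|z\|)\,\rho(\tfrac a{2\alpha})\cdot\bigl(C_j(a)-C(a),0\bigr).\]
Because the preorbit defining $W^u_\loc(P_{j\,a})$ is untouched, $P_{j\,a}$ still lies in $W^u(\Omega_a;g_a)$ and, near the tangency point (which for $j$ large lies where $\chi\equiv1$), the curve $g_a(W^u_\loc(P_{j\,a};g_a))$ is the horizontal translate of $f_a(W^u_\loc(P_{j\,a};f_a))$ by $-\rho(a/2\alpha)(C_j(a)-C(a))$; hence for $\|a\|<\alpha$ its defect equals $C(a)$, which vanishes to order $d$ at $a=0$. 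So $(g_a)_a$ is $C^{d,r}$‑$(\mu/2)$‑close to $(f_a)_a$, agrees with it for $a\notin(-2\alpha,2\alpha)^k$, coincides with it near $\Omega_a$ (hence $(\mathcal D_d)$ and $|\det\,Df_a(\Omega)|<1$ persist), and has a genuine homoclinic $C^d$‑paratangency of $(\Omega_a)_a$ at $a=0$, with technical bounds dominated by $(\theta,U+1,\nu+1)$.

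Finally I would invoke Proposition~\ref{sinkcreation} for $(g_a)_a$ with error $\mu/2$ and the already fixed $\alpha<\alpha_0$ (its proof being local, it applies in the charts $\phi_a,\psi_a$ verbatim): for every $M$ large it gives $(f'_a)_a$ that is $C^{d,r}$‑$(\mu/2)$‑close to $(g_a)_a$, equals $g_a$ for $a\notin(-2\alpha,2\alpha)^k$, and has a periodic sink of period $M$ near $P_{\infty\,a}$ for every $a\in(-\alpha,\alpha)^k$. Then $(f'_a)_a$ is $\mu$‑$C^{d,r}$‑close to $(f_a)_a$ and $f'_a=g_a=f_a$ for $a\notin(-2\alpha,2\alpha)^k$, which gives all the assertions. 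The degenerate cases should be inherited from Proposition~\ref{sinkcreation}: for $d=0$ the paratangency is just a tangency at $a=0$ and the factor $\alpha^{-d}$ disappears, and for $r=d\ge2$ the mild loss of regularity of $a\mapsto c_a$ is absorbed inside that proof via the auxiliary family $(\bar A_a)_a$.

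The hard part is exactly the quantifier bookkeeping: keeping $\alpha_0$ independent of $N$ and of the finite level $j$. The resolution is to fix, from $(\theta,U,\nu)$ and $\mu$ alone, a generous technical‑bound box such as $(\theta,U+1,\nu+1)$ together with the corresponding $\alpha_0$ \emph{first}, then $\alpha<\alpha_0$, then $j=j(\alpha)$; and to promote the quasi‑paratangency by subtracting precisely $C_j-C$ (not $C_j$, whose $C^d$‑norm does not tend to $0$), which is $C^d$‑small because $C_j\to C$ in $C^d$ and which leaves the defect of $(g_a)_a$ equal near $a=0$ to the \emph{given} function $C$, so that the bounds of $(g_a)_a$ indeed fall inside the box once $j$ is large.
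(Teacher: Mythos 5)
Your proposal is correct and follows essentially the same route as the paper: promote the quasi-homoclinic $C^d$-paratangency to a genuine $(\theta',U',\nu')$-homoclinic $C^d$-paratangency by a perturbation supported in the $\theta$-ball around $P_{\infty\,a}$ and localized in parameter by $\rho(a/2\alpha)$, with the key quantifier order $\mu\to\alpha_0$ (via Claim \ref{dpeendence}) $\to\alpha\to j$, and then apply Proposition \ref{sinkcreation}. The only (harmless) difference is the form of the perturbation: the paper composes $f_a$ with a $C^{d,r}$-small family of diffeomorphisms $\tau_a$ carrying $W^u_{\loc}(P_{j\,a})$ onto $W^u_{\loc}(P_{\infty\,a})$, whereas you translate in the charts by the defect difference $C_j-C$ as in Lemma \ref{robusthomoclinictangency}; both become small as $j\to\infty$ and yield comparable technical bounds.
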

\begin{proof} In view of Proposition \ref{sinkcreation}, it suffices to perturb $(f_a)_a$ to a family $(f'_a)_a$ which has $(\theta/2, 2U,2\nu)$-$C^{d,r}$-paratangency.
For every large $j$, the exists a $C^{d,r}$-family $(\tau_a)_a$ of diffeomorphisms $\tau_a$ of $\R^2$ which sends $W^u_{loc} (P_{j\, a})$ onto $W^u_{loc} (P_{\infty\, a})$,
equal to the identity outside the $\theta$-neighborhood of $P_{\infty\, a}$ and which is $C^{d,r}$-close to $(id_{\R^2})_a$ when $j$ is large.


 
Let $\phi\in C^\infty(\R,[0,1])$ be a function with support in $[-1,1]$ and equal to $1$ on  $[-1/2,1/2]$. Put $\rho\colon (a_i)_i\in \R^k\mapsto \prod_i \phi(a_i)$.
We define:
\[f'_a(z)= \left\{\begin{array}{cl}
f_a(z)+ \rho(\frac {a}{2\alpha}) (f_a \circ \tau_a(z)-f_a(z))& z\in B(P_{\infty , a}, \theta),\\
f_a(z) &\text{otherwise.}\end{array}\right.\] 
We notice that $(f'_a)_a$ is a family of local diffeomorphisms $C^{d,r}$-close to $(f_a)_a$ when $j$ is large.
The map $f'_a$ is equal to $f_a$ on the complement of $B(P_{\infty , a}, \theta)$ or $a\notin (-2\alpha,2\alpha)^k$.
We remark also that $f'_a$ has a $(\theta, 2U,2\nu)$-$C^{d,r}$-paratangency.


\end{proof}

\begin{proof}[Proof of Claim \ref{unpuit}] This claim just needs to be proved in the case $r>d>0$ or $r=d\ge 2$. 
We are going to work in another family of charts in which $f_a|D$ has a skew product form. This family of charts is less regular than the previous ones, but it does not matter since we do not need to perturb the dynamics anymore.

Taking $\eta$ smaller if necessary, the local unstable manifold $W^u_\eta(\Omega; f_a)=(-\eta,\eta)\times\{0\}$ is transverse to $e_a$. A flow box for this line field gives a chart of a neighborhood of $\Omega$, in which $L_a:=f_a|D\cap f_a^{-1}(D)$ has the following skew product form:
\[L_a\colon (x,y)\mapsto (f_a(x,0), f_a(x,y)).\]
Put $g_a(x):=f_a(x,0)$ and $h_{a \, x}(y):= f_a(x,y)$, with $(g_a)_a$ of class $C^{d,r}$ and $(a,y)\mapsto h_{a \, x}(y)$ of class $C^d$. Note also that  $(a,y)\mapsto \partial_y h_{a \, x}(y)$ is of class $C^d$. In these notations, for $n\ge 1$:
\[L_a(x,y)= (g_a(x), h_{a \, x}(y)),\; L_a^n(x,y)= (g^n_a(x), h^n_{a \, \overline x}(y)),\]
 with $ h^n_{a \, \overline x}=h_{a \, g^{n-1}_a(x)}\circ \cdots \circ h_{a\, x}$. 

In these coordinates observe that $f_a$ has the form:
\[f_a''^N\colon (p_a+x,y)\in D_P\mapsto (p^n_a+A''_a(x,y),B''_a(x,y)),\]
with for every $a\in (-\alpha,\alpha) ^k$:
\[ 0=A''_a(0)=\partial_x A''_a(0),\quad g^n_a(p^n_a)=p_a.\]

Let $\kappa>1$ be so that for every $a$, with $\sigma'_a=\kappa^2 \sigma_a$ and $\lambda'_a=\kappa^2 \lambda_a$, it holds $|\sigma_a'\lambda_a'|<1$.	By taking $\eta$ and $\kappa$ smaller if necessary, we can assume that for every $(x,y)\in D$:
 \[|\partial_x g_a(x)|\le \kappa |\sigma_a|=  |\sigma'_a|/\kappa\quad \text{and}\quad |\partial_y h_{a\, x}(y)|\le \kappa |\lambda_a|=|\lambda'_a|/\kappa.\]
   
Let us compute the expression of $\phi_{a\,n}:=(f_a|D)^n\circ f_a''^{N}|D_p$:
\[\phi_{a\,n}\colon  (p_a+x, y)\in D_P\mapsto  (g_a^n(A''_a(x, y)+p^n_a),h^n_{a \, \overline z}( B''_a(x, y))),\]
 with $z:=A''_a(x, y)+p^n_a$.

Let us study the derivatives of the first coordinate of $\phi_{a\,n}$. Let $C$ be a constant independent of $n$ and $(x,y)$. Using  the volume contraction of $D_\Omega f_a$, we show below the following: 
\begin{lemm}\label{deriveepartielle}
For $(x,y)$ in the set $B:=p_a+(- |\sigma_a'^{-n}|,|\sigma_a'^{-n}|)\times (- |\sigma_a'^{-3n}|,|\sigma_a'^{-3n}|)$,  the Jacobean matrix of $D_{(x,y)}\phi_{a\, n}$ has its entries smaller than those of:
\[C\kappa^{-n}\left[\begin{array}{cc} |\sigma_a'|^n\cdot |x| & |\sigma_a'|^n\\ n|\lambda_{a}'|^n&n|\lambda_{a}'|^n\end{array}\right]\le 
C \left[\begin{array}{cc} \kappa^{-n} & \kappa^{-n}|\sigma_a'|^n\\
 n\kappa^{-n}|\lambda_{a}'|^n& n\kappa^{-n}|\lambda_{a}'|^n\end{array}\right],\]
with $C$ a constant independent of $n$.
\end{lemm}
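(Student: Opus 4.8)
The plan is to write $\phi_{a,n}=L_a^n\circ (f_a''^N|_{D_P})$ and estimate the two chain--rule factors separately, exploiting the skew--product form of $L_a$. Set $(z,w):=f_a''^N(p_a+x,y)=(p_a^n+A_a''(x,y),\,B_a''(x,y))$, so that $D_{(x,y)}\phi_{a,n}=D_{(z,w)}L_a^n\cdot D_{(p_a+x,y)}(f_a''^N)$. For the inner factor I would first record elementary bounds: since in the chosen chart the critical point of $x\mapsto A_a''(x,0)$ at which the critical value vanishes sits at the origin, we have $A_a''(0)=\partial_x A_a''(0)=0$, whence by Taylor $|\partial_x A_a''(x,y)|\le C(|x|+|y|)$, while $\partial_y A_a''$, $\partial_x B_a''$, $\partial_y B_a''$ and $|B_a''|$ are bounded by a constant $C$ on the fixed neighbourhood $D_P$.

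For the outer factor, the skew--product form $L_a(x,y)=(g_a(x),h_{a,x}(y))$ makes $D L_a^n$ lower triangular,
\[
D_{(z,w)}L_a^n=\begin{pmatrix}(g_a^n)'(z) & 0\\ S_n & T_n\end{pmatrix},\qquad T_n:=\partial_w h^n_{a,\bar z}(w),\quad S_n:=\partial_z\!\left(h^n_{a,\bar z}(w)\right),
\]
with $|(g_a^n)'(z)|\le(\kappa|\sigma_a|)^n=\kappa^{-n}|\sigma'_a|^n$ and $|T_n|\le(\kappa|\lambda_a|)^n=\kappa^{-n}|\lambda'_a|^n$ by the uniform derivative bounds on $D$. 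The crux is the shear $S_n$. Differentiating $h^n_{a,\bar z}=h_{a,g_a^{n-1}(z)}\circ\cdots\circ h_{a,z}$ with respect to the base point and telescoping, $S_n$ is a sum over $0\le j\le n-1$ of a product of: the $n-1-j$ fibre contractions above level $j$ (each of modulus $\le\kappa|\lambda_a|$); the fibre shear $\partial_x h_{a,z_j}(w_j)$ of the $j$-th fibre map; and the base expansion $(g_a^j)'(z)$ (modulus $\le(\kappa|\sigma_a|)^j$). The decisive point is that the whole local unstable manifold lies in $\{y=0\}$, so $h_{a,x}(0)\equiv 0$ and hence $\partial_x h_{a,x}(0)\equiv 0$, which gives $|\partial_x h_{a,z_j}(w_j)|\le C|w_j|\le C(\kappa|\lambda_a|)^j|w|$ --- the extra decaying factor coming from the contraction of the orbit's $y$--coordinates $w_j=h^j_{a,\bar z}(w)$ (the orbit converging to $\Omega$). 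Thus the $j$-th summand is $\le C|w|(\kappa|\lambda_a|)^{n-1}(\kappa|\sigma_a|)^j$; summing the geometric series and invoking the volume--contraction inequality built into the choice of $\kappa$, namely $\kappa^4|\lambda_a\sigma_a|<1$, yields $|S_n|\le C|w|(\kappa^2|\lambda_a\sigma_a|)^{n}$.

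Finally I would restrict $(x,y)$ to $B$, so that $|x|\le|\sigma'_a|^{-n}$ and $|y|\le|\sigma'_a|^{-3n}$; one checks that then $f_a''^N(B)$, together with its first $n$ iterates under $f_a$, stays inside $D$ for $n$ large, so $\phi_{a,n}$ is defined on $B$. Multiplying the two Jacobians: the $(1,1)$--entry $(g_a^n)'(z)\,\partial_x A_a''$ is $\le C\kappa^{-n}|\sigma'_a|^n|x|$ up to a lower--order term absorbed into $\kappa^{-n}$; the $(1,2)$--entry $(g_a^n)'(z)\,\partial_y A_a''$ is $\le C\kappa^{-n}|\sigma'_a|^n$; and the $(2,1)$-- and $(2,2)$--entries $S_n\partial_x A_a''+T_n\partial_x B_a''$ and $S_n\partial_y A_a''+T_n\partial_y B_a''$ are each $\le Cn\kappa^{-n}|\lambda'_a|^n$ (the $T_n\partial B_a''$ parts already of this size, the $S_n\partial A_a''$ parts controlled by the bound on $S_n$ together with the smallness of $\partial_x A_a''$ on $B$). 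This is the claimed matrix inequality; the second displayed inequality then follows from $|x|\le|\sigma'_a|^{-n}$ and $|\lambda'_a|^n\le 1$.

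The step I expect to be the main obstacle is precisely the estimate of $S_n$: the naive bound using only $|\partial_x h_{a,x}|\le C$ produces a factor $(\kappa|\sigma_a|)^n$ and is hopelessly large, so one genuinely needs \emph{both} structural facts --- the identity $\partial_x h_{a,x}(0)\equiv 0$, which replaces that constant by the exponentially small orbit coordinate $|w_j|$, and the volume--contraction condition $\kappa^4|\lambda_a\sigma_a|<1$, which makes the resulting series a convergent geometric series played against the base expansion. The rest --- the Taylor bounds on $D(f_a''^N)$, the triangular form of $D L_a^n$, and the final matrix multiplication on $B$ --- is routine bookkeeping, the only real verification being that $f_a''^N(B)$ remains in the domain of $L_a^n$ once $n$ is large.
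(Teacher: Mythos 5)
Your decomposition and the two structural cancellations are the same as in the paper: you write $\phi_{a,n}=L_a^n\circ f_a''^N$, use the lower-triangular form of $DL_a^n$, the normalization $A''_a(0)=\partial_x A''_a(0)=0$, and the identity $\partial_x h_{a,x}(0)=0$ together with the exponential decay of the fibre orbit. The divergence is in the shear estimate, and it is exactly there that your argument does not deliver the stated inequality. The paper bounds $\partial_x h^n_{a\,\overline x}(y)$ by $Cn\kappa^{-n}|\lambda'_a|^n$ through a telescoping sum in which only fibre derivatives appear (its displayed sum carries no factor $(g_a^k)'$), and then concludes both bottom entries at once since $Dz$ and $DB''_a$ are bounded. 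You instead retain the base-expansion factor $(g_a^j)'(z)$ — which the usual chain rule does produce — and obtain $|S_n|\le C|w|\,(\kappa^{2}|\lambda_a\sigma_a|)^{n}=C|w|\,\kappa^{-2n}|\lambda'_a\sigma'_a|^{n}$, larger than the paper's shear bound by roughly $(\kappa|\sigma_a|)^n/n$. With this $S_n$ the $(2,1)$ entry is indeed rescued by the smallness of $\partial_x A''_a$ on $B$, but the $(2,2)$ entry is $S_n\,\partial_y A''_a+T_n\,\partial_y B''_a$, and $\partial_y A''_a$ is only bounded (at the tangency it is generically of order $1$); your estimates then give $|S_n\,\partial_y A''_a|\lesssim \kappa^{-2n}|\lambda'_a\sigma'_a|^{n}$, which exceeds the claimed $n\kappa^{-n}|\lambda'_a|^{n}$ by a factor of order $(\kappa|\sigma_a|)^n/n\to\infty$. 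So the last step, which you dismiss as routine bookkeeping, is precisely where the proof of the statement breaks: "smallness of $\partial_x A''$ on $B$" cannot be invoked for the $(2,2)$ entry, since that entry involves $\partial_y A''_a$, not $\partial_x A''_a$.

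Concretely, then, you have two options, and you should pick one explicitly. Either you defend the paper's sharper shear bound, i.e. justify an estimate of $\partial_x h^n_{a\,\overline x}$ in which no base-expansion factor survives — your own derivation of $S_n$ shows this is not automatic, so it cannot be left implicit; or you accept your weaker bound and prove a weaker lemma: with your $S_n$ one gets $(2,1)$-entry $\lesssim \kappa^{-2n}|\lambda'_a|^{n}+\kappa^{-n}|\lambda'_a|^{n}$ and $(2,2)$-entry $\lesssim \kappa^{-2n}|\lambda'_a\sigma'_a|^{n}+\kappa^{-n}|\lambda'_a|^{n}$, which is not the matrix inequality of Lemma \ref{deriveepartielle} but is still enough for its only use, the contraction of $D\phi_{a,n}$ for the norm $N(u,v)=|u|+|\sigma_a'^{n}v|$ in Claim \ref{unpuit}, because $|\lambda'_a\sigma'_a|<1$ makes both weighted bottom entries tend to $0$. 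As written, however, the proposal claims the entrywise bound of the lemma while its own intermediate estimate is incompatible with it, so there is a genuine gap at the $(2,2)$ entry.
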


We remark that for the norm $N(u,v)=|u|+|\sigma_a'^{n} v|$, the subordinate norm of the latter matrix is at most $C \kappa^{-n}(1+Cn|\sigma_a'|^{n}|\lambda'_a|^{n})$,
 which is small for $n$ large. Thus for this norm, the differential of $\phi_{a\,n}$ is (very) contracting. 
As $\phi_{a\,n}(p_a)=(p_a, h^n_{a \, \overline z}( z))$ is $\lambda_a'^n$-close to $(p,0)$, the set $B$ is sent into itself by $\phi_{a\,n}$.
 Therefore, there is a sink of period $n+N$ in $B$.\end{proof}
\begin{proof}[Proof of Lemma \ref{deriveepartielle}]
We recall that $r\ge 2$. It holds:
\[|\partial_y g_a^n(A''_a(x, y)+p^n_a)|\le C \kappa^{-n} |\sigma_a'^n|,\]
 As $\partial_x A''_a(0)=0$, it comes:
\[|\partial_x (g_a^n(A''_a(x, y)+p^n_a)|\le  \kappa^{-n} |\sigma_a'^n|\cdot |x|.\]
To study the derivative of the second coordinate $h^n_{a \, \overline z}( z)$ of $\phi_{a\,n}$, we bound the derivatives of $h_{a\, \overline x}^n$:
  \[|\partial_y h^n_{a \, \overline x}(y)|\le  \kappa^{-n} |\lambda'^n_a|\]
  \[\partial_x h^n_{a \, \overline x}(y)=\sum_{k=0}^{n-1}\partial_y (h_{a\, x_{n-1}} \circ \cdots \circ h_{a\, x_{k+1}})(\partial_x h_{a\, x_k})(h^k_{a\, \overline x}(y)),\; \text{with}\; h^k_{a\, \overline x}=(h_{a\, x_{k-1}}\circ\cdots \circ  h_{a\, x_{0}})\]
	We notice that $|\partial_y (h_{a\, x_{n-1}} \circ \cdots \circ h_{a\, x_{k+1}})|$ is bounded by $(\lambda_a'/\kappa)^{n-k-1}$, whereas $h^k_{a\, \overline x}(y)$ is bounded by $(\lambda_a'/\kappa)^k$. As $\partial_x h_{a\, x_k}(0)=0$ and $\partial_y\partial_x h_{a\, x_k}$ are well defined and continuous, we get that  $\partial_x h_{a\, x_k}(h^k_{a\, \overline x}(y))$ is dominated by $(\lambda_{a}'/\kappa)^k$. Consequently:
	\[|\partial_x h^n_{a \, \overline x}(y)|\le C n \kappa^{-n} |\lambda_{a}'|^n.\]
As $DB''_a(x, y)$ and $Dz$ are bounded, it comes: 
\[|\partial_x h_{a \, \overline z}^n(B''_a(x, y))|\le  C n \kappa ^{-n}  |\lambda_{a}'|^n,\; 
|\partial_y h_{a \, \overline z}^n(B''_a(x, y))|\le  C n \kappa ^{-n}  |\lambda_{a}'|^n.
\]
\end{proof}
\section{Proof of main Theorem \ref{main}: The local diffeomorphism case}\label{section:proofmain}
In \cite{DNP06}, it is shown the existence of a Baire generic set of $C^r$-diffeomorphisms with infinitely many sinks by 
coupling a blender and a dissipative saddle point, for $r\ge 1$. We push forward this construction to obtain Theorem \ref{main}, by coupling a parablender and a dissipative saddle point.

We recall that in \textsection \ref{defparablender}, we defined $d'$ as the dimension of $\{P\in P[X_1,\dots,X_k]:\; P(0)=0,\; deg\, P\le d\}$ (hence $d'=d$ for $k=1$). Also we defined:
\[ \mathring f\colon (x,y)\in \mathbb R/6\Z \times \R\mapsto (Q^{d'+1}(x), 2y/3+\rho(x)/3)\in \mathbb R/6\Z \times \R,\]
with $Q\colon x\in \mathbb S^1:= \mathbb R/6\mathbb Z\mapsto 4x-3\in \mathbb S^1$ and with $\rho\in C^\infty (\mathbb S^1, [0,1])$ a smooth function on $\mathbb S^1$ equal to $1$ on $[1/2,1]$ and $-1$ on $[-1,-1/2]$ and with support in $[-3/2, 1/4]\cup [1/4, 3/2]$. For every $C^r$-perturbation of $f$ of $\mathring f$, we also defined:
\[f_{\epsilon\, a}(z)=  f(z)+(0,\epsilon P_{\delta}(a)),\quad \text{if } z\in \tilde Y_{\delta}.\]

We are going to deform $\mathring f$ to create a dissipative homoclinic tangency.

We remark that $Q(3)=3$ and $\rho(3)=0$. Then the map $\mathring f$ fixes the point $\mathring \Omega= (3,0)$. We notice that the eigenvalues of 
$\mathring \Omega$ are $\mathring \sigma:=4^{d'+1}$ and $\mathring \lambda= (2/3)^{d'+1}$. Let us deform $\mathring f$ so that this hyperbolic  fixed point persists as a  volume contracting one which satisfies condition $(\mathcal D_d)$.

For this end, let $\phi\in C^{\infty}(\mathbb S^1,[0,1])$ be a function with support in $[-1,1]$ and equal to $1$ on $[-1/2,1/2]$.

 Now we consider the map:
 \[\mathring f'\colon (x,y)\in \R/6\Z\times \R \mapsto \mathring f(x,y)+ \phi(x-3) ((Q^{d'+1}(x),4^{-(d'+2)^2} y)-\mathring f(x,y))\; .\]
We notice that $\mathring \Omega:= (3,0)$ is a hyperbolic fixed point of $\mathring f'$, and it satisfies $|\det\;D_{\mathring \Omega} \mathring f'|<1$ and condition $(\mathcal D_d)$.

We remark that the point $(0,0)$ is sent by $\mathring f'$ to $\mathring \Omega$. Consequently, we can perform a deformation of $\mathring f'$ to construct a  map $\mathring g$ so that for $\eta\in (0, 4^{-d-2})$ small it holds:
\begin{itemize}
\item On the complement of $[-2\eta,2\eta]\times \R$, the map   $\mathring g$  is equal to $\mathring f'$.
\item The point $(0,0)$ is sent by $\mathring g$ to the point $Q:=(3,1)\in W^s_{1}(\mathring \Omega;\mathring g)=\{3\}\times[-1,1]$.
\item The preimage of $W^s_{1}(\Omega; \mathring g)$ by $\mathring g|[-\eta,\eta]\times[-2,2]$ is the graph of $x\in[-\eta ,\eta]\mapsto 2x^2$.
\end{itemize}
\begin{rema}\label{rema1} For every $x\in \mathbb S^1$, the restriction $\mathring f|\{x\}\times \R$ is injective.\end{rema}
We draw the construction at Figure \ref{preuvemain}.
\begin{figure}[h]
    \centering
        \includegraphics{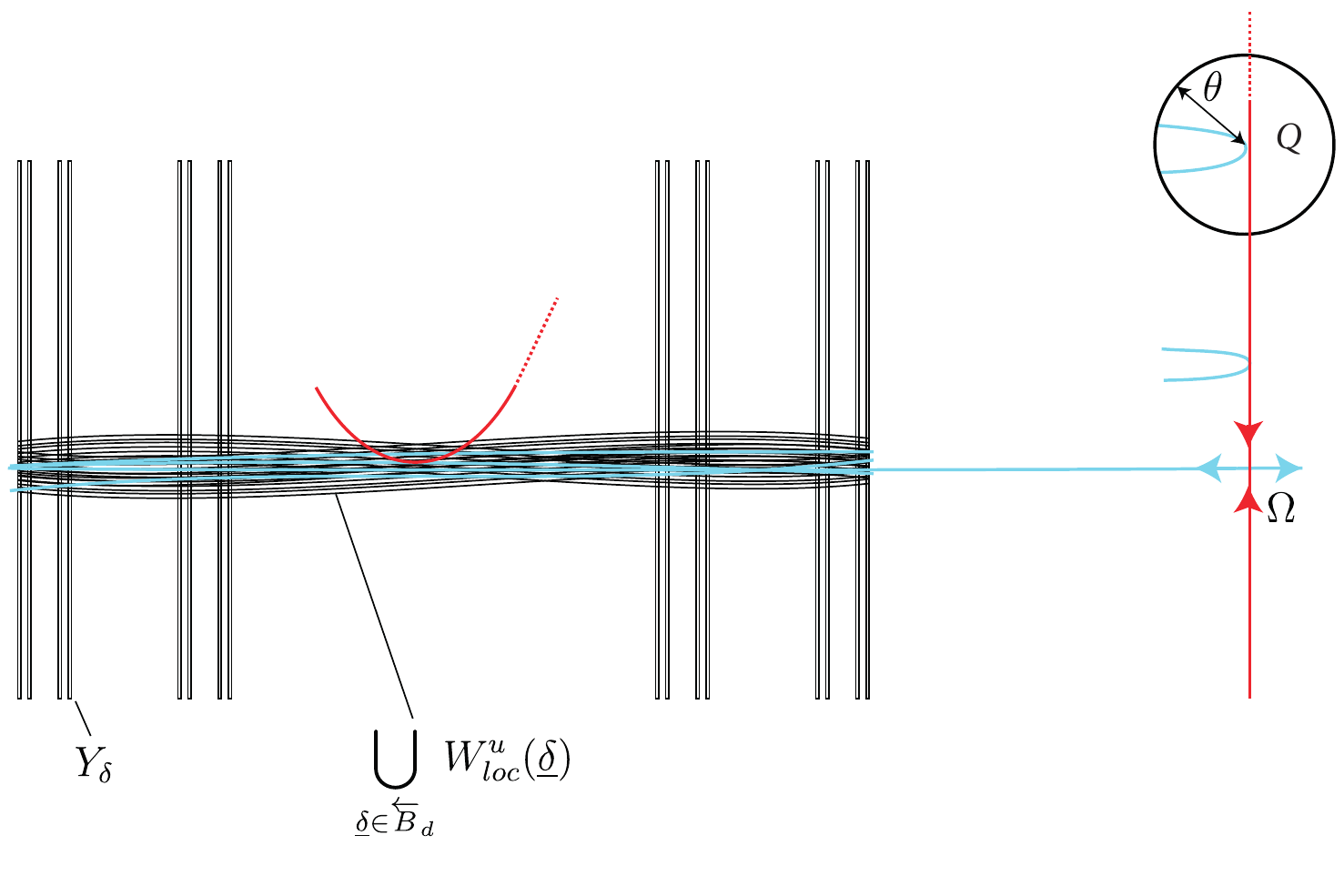}
    \caption{Coupling of a parablender with a saddle point.}
    \label{preuvemain}
\end{figure}
For any $C^r$-perturbation $g$ of $\mathring g$ and $\epsilon >0$ small, we consider the family $(g_{a\, \epsilon})_a$ so that: 
\[g_{\epsilon\, a}(z)=  \left\{\begin{array}{cl}
g(z),& \text{if}\;  z\in [-1/4,1/4]\times [-2,2],\\
g(z)+(0,\epsilon P_{\delta}(a)),& \text{if } z\in \tilde Y_{\delta},\; \delta\in \Delta_{d'}.\end{array}\right.\]


We recall that $\mathring g|[-2,-1/4]\sqcup [1/4,2]\times [-2,2]$ is equal to $\mathring f$. Hence $\mathring g$ and its perturbation $g$ still satisfy the  fundamental property of the parablender \ref{Ml2k}. 
Thus there exist $\alpha>0$,  a neighborhood $U_0$ of $\mathring g$, and for every $\epsilon>0$ small, there exists a neighborhood $U_\epsilon$ of $\{(g_{\epsilon\, a})_a\colon g\in U_0\}$ in the $C^{d,r}$-topology, so that for every $(g_{ a})_a \in U_\epsilon$, for every $a\in [-\alpha,\alpha]^k$, there exists $\underline \delta^a\in \Delta_{d'}^\Z$ so that   $W^u(\underline \delta^a; g_a)$ is $C^d$-paratangent to the parabola $(g_a|[-\eta,\eta]\times[-2,2])^{-1}( W^s_{1}(\Omega_a; g_a))$, with $\Omega_a$ the fixed point of $g_a$ equal to the hyperbolic continuation of $\mathring \Omega$.

We remark that the tangency point, denoted by $P_{\infty\, a}$, has its image by $g_a$ which is close to $Q=(3,1)$.

\begin{fact}
For every $(g_a)_a\in U_\epsilon$, for every $a\in (-\alpha,\alpha)^k$, the hyperbolic fixed point $\Omega(g_a)$ has a quasi-homoclinic $C^d$-paratangency.
\end{fact}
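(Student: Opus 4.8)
The plan is to get the quasi-homoclinic $C^d$-paratangency of $(\Omega_a)_a$ at each $a_0\in(-\alpha,\alpha)^k$ from two ingredients already in place: the paratangency with the parabola family produced just above out of the fundamental property \ref{Ml2k}, and the fact --- built into the coupling defining $\mathring g$, and persistent for every $(g_a)_a\in U_\epsilon$ --- that $\Omega_a$ and the blender $K_a:=B_{d\,k}(g_a)$ lie in a common transitive hyperbolic set.

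First I would fix $a_0$ and record what \ref{Ml2k} supplies: a sequence $\underline\delta=\underline\delta^{a_0}\in\Delta_{d'}^\Z$ with $(W^u_{loc}(\underline\delta;g_a))_a$ and $(\Gamma_a)_a$ $C^d$-paratangent at $a_0$, where $\Gamma_a:=(g_a|[-\eta,\eta]\times[-2,2])^{-1}(W^s_1(\Omega_a;g_a))$ and the tangency point $\hat P_a$ is a $C^{\min(d,r-1)}$ function of $a$ by Fact \ref{prametrizationtangent}. Then I would push this forward by $g_a$: being a local diffeomorphism, $g_a$ restricts to a $C^{d,r}$-family of diffeomorphisms from a small neighborhood of $\hat P_a$ onto its image, carrying that neighborhood onto a neighborhood of $Q=(3,1)$ and sending $\Gamma_a$ into $W^s_1(\Omega_a;g_a)\subset W^s(\Omega_a;g_a)$. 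By invariance of $C^d$-paratangency under $C^{d,r}$-families of diffeomorphisms (Remark \ref{Remprametrizationtangent}), the families $(g_a(W^u_{loc}(\underline\delta;g_a)))_a$ and $(g_a(\Gamma_a))_a\subset(W^s(\Omega_a;g_a))_a$ are $C^d$-paratangent at $a_0$. Setting $P_{\infty,a}:=g_a(\hat P_a)$ (close to $Q$) and letting $W^u_{loc}(P_{\infty,a};g_a)$ be the image by $g_a$ of a short sub-arc of $W^u_{loc}(\underline\delta;g_a)$ about $\hat P_a$ then gives a $C^{d,r}$-family of curves $C^d$-paratangent to $(W^s(\Omega_a;g_a))_a$ at $a_0$, as demanded by the definition of quasi-homoclinic paratangency.

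It remains to realize $(W^u_{loc}(P_{\infty,a};g_a))_a$ as a $C^{d,r}$-limit of local unstable manifolds of genuine points $P_{j,a}\in W^u(\Omega_a;g_a)$. Here I would use $W^u(\Omega_a;g_a)=\bigcup_{n\ge 0}g_a^n(W^u_\eta(\Omega_a;g_a))$ and the transitivity of the ambient hyperbolic set: some forward iterate of $W^u_\eta(\Omega_a;g_a)$ crosses a local stable manifold of $K_a$ transversally, and feeding this piece into the parametrized inclination Lemma \ref{inclination} produces, among its further forward iterates $g_a^{n_j}(\text{arc of }W^u_\eta(\Omega_a;g_a))$, a sequence converging in $C^{d,r}$ to $W^u_{loc}(\underline\delta;g_a)$ (sub-arc about $\hat P_a$) --- one selects the iterates by a shadowing argument so that the prescribed past $\underline\delta$ is matched on ever-longer blocks, which is unobstructed since $\overleftarrow{K}_{g_a}\cong\Delta_{d'}^\Z$ with the shift. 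These iterates are local unstable manifolds of points $\tilde P_{j,a}\in W^u(\Omega_a;g_a)$ in the sense of the definition; applying $g_a$ to all of them and relabelling $P_{j,a}:=g_a(\tilde P_{j,a})$ yields $W^u_{loc}(P_{j,a};g_a)\to W^u_{loc}(P_{\infty,a};g_a)$ in $C^{d,r}$, and together with the previous paragraph this is precisely a quasi-homoclinic $C^d$-paratangency of $(\Omega_a)_a$ at $a_0$.

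The delicate point is this last step: extracting pieces of $W^u(\Omega_a;g_a)$ that converge, \emph{as a family} in the $C^{d,r}$-topology, to the \emph{specific} leaf $W^u_{loc}(\underline\delta;g_a)$ singled out by the parablender, rather than just to the union of the blender's unstable leaves. This is exactly where the robust transitivity built into the construction of $\mathring g$ and the parametrized inclination Lemma \ref{inclination} have to be combined; the itinerary matching itself is only a routine shadowing estimate. Everything else (the transfer of paratangency through the $C^{d,r}$-family $g_a$ via Remark \ref{Remprametrizationtangent}, and the smoothness of $\hat P_a$ via Fact \ref{prametrizationtangent}) is immediate.
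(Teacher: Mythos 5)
Your overall architecture is the right one and coincides with the paper's: the paratangency itself comes for free from the fundamental property of the parablender, since the parabola $\Gamma_a=(g_a|[-\eta,\eta]\times[-2,2])^{-1}(W^s_1(\Omega_a;g_a))$ is already contained in $W^s(\Omega_a;g_a)$ (so your push-forward by $g_a$ via Remark \ref{Remprametrizationtangent} is harmless but unnecessary), and the whole content of the Fact is to realize $W^u_{loc}(\underline\delta^a;g_a)$ as a $C^{d,r}$-limit of local unstable manifolds $W^u_{loc}(P_{j,a};g_a)$ of points $P_{j,a}\in W^u(\Omega_a;g_a)$, which both you and the paper do by iterating a piece of $W^u(\Omega_a;g_a)$ forward along the itinerary $\delta^a_{-j},\dots,\delta^a_{-1}$ and invoking the parametrized inclination Lemma \ref{inclination} for the $C^{d,r}$-convergence.

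The genuine gap is in how you produce the heteroclinic crossing of the blender by $W^u(\Omega_a;g_a)$. You ground it on the assertion that $\Omega_a$ and $K_a$ lie in a common transitive hyperbolic set, ``built into the coupling defining $\mathring g$'' and robust over $U_\epsilon$; but this statement appears only in the informal plan of the paper and is never verified, and your proposal does not verify it either, nor does it check that such a structure (or at least the transverse intersection it is meant to yield) persists for every $(g_a)_a\in U_\epsilon$ and every $a\in(-\alpha,\alpha)^k$. The paper avoids this entirely by a direct computation with the explicit form of $\mathring g$: the arc $W^u_{3/4^{d'+1}}(\Omega_a;g_a)$ is close to $(3-3/4^{d'+1},3+3/4^{d'+1})\times\{0\}$, and since near that segment $g_a$ is close to $(x,y)\mapsto (Q^{d'+1}(x),h(x)\cdot y)$ with $Q^{d'+1}$ expanding by $4^{d'+1}$, its image $W^u_3(\Omega_a;g_a)$ is close to the whole circle $(0,6)\times\{0\}$, hence crosses every cylinder $\tilde Y_\delta$ full-width. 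This is exactly what makes your ``itinerary matching'' trivial: for each $j$ one simply takes $W^u_{loc}(P_{j,a};g_a):=Y_0\cap g_a^j\bigl(W^u_3(\Omega_a;g_a)\cap\bigcap_{n=1}^{j}g_a^{n-j}(\tilde Y_{\delta^a_{-n}})\bigr)$, with no shadowing or transitivity needed, and picks $P_{j,a}$ on it with the same $x$-coordinate as $P_{\infty,a}$. Starting instead from a single unspecified transverse intersection, as you propose, one would still need an extra (unwritten) step — iterate the small arc forward until it crosses $Y_0$ full-width before the itinerary can be prescribed — so your ``routine shadowing estimate'' is hiding both the unproved input and this intermediate expansion argument. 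Replacing your first ingredient by the explicit covering of the circle closes the gap and essentially reproduces the paper's proof.
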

\begin{proof}
We consider the following preimages of $P_{\infty\, a}$.
\[P_{\infty\, a}^0:= P_{\infty\, a},\quad P_{\infty\, a}^{i}:= (g_a|\tilde Y_{\delta^a_{i}})^{-1}( P_{\infty\, a}^{i+1}),\; \forall i<0.\]
We notice that the sequence $(P_{\infty\, a}^{j})_{j<0}$ remains in $\cup_{\delta} \tilde Y_\delta$. Also, $W^u_{loc}(\underline \delta^a, g_a)$ is a local unstable manifold of the preorbit  $(P_{\infty\, a}^{j})_{j\le 0}$.

On the other hand, a local unstable manifold $W^u_{3/4^{d'+1}}(\Omega_a;g_a)$ is close to $(-3/4^{d'+1}+3,3/4^{d'+1}+3)\times\{0\}$. As $d'\ge 0$, it is close to be contained in $[9/4, 15/4]\times \{0\}$. We recall that at the neighborhood of this segment, the map $g_a$ is close to $(x,y)\mapsto (Q^{d'+1}(x), h(x)\cdot y) $ for a function $h$ with values in $[-3/4, 3/4]$. Consequently, the image $W^u_{3}(\Omega_a;g_a)$ of $W^u_{3/4^{d'+1}}(\Omega_a;g_a)$ by $g_a$ is close to $(0,6)\times \{0\}$.
For every $j\ge 1$, we consider the following local unstable manifold:
\[Y_0\cap g_a^j \left(  W^u_{3}(\Omega_a;g_a)\cap \bigcap_{n=1}^{j} g_a^{n-j}( \tilde Y_{\delta^a_{-n}})\right)=: W^u_{loc}(P_{j\, a}; g_a)\; .\]  
By Lemma \ref{inclination}, this unstable manifold is $C^{d}$-close to $W^u_{loc} (\underline \delta^a; g_a)$ when $j\ge 1$ is large. Moreover the family $(W^u_{loc}(
P_{j\, a}; g_{a'}))_{a'}$ is $C^{d,r}$-close to $(W^u_{loc} (\underline \delta^a; g_{a'}))_{a'}$, with 
\[ W^u_{loc}(P_{j\, a}; g_{a'}) := Y_0\cap  g_{a'}^j \left(  W^u_{3}(\Omega_{a'};g_{a'})\cap \bigcap_{n=1}^{j} g_{a'}^{n-j}( \tilde Y_{\delta^a_{-n}})\right)\; .\]  
 For every $j\ge 1$, this manifold is $C^r$-close to be a segment of the form $[-1,1]\times \{y_j\}$ for $y_j\in [-3/2,3/2]$. Consequently there exists a point $P_{j\, a}\in W^u_{loc}(P_{j\, a}; g_a)$ with the same $x$-coordinate as $P_{\infty\, a}$. In particular $(P_{j\, a})_j$ converges to $P_{\infty\, a}$.
 \end{proof}
Let $\theta= 1/4$. We notice that for every $a\in [-\alpha, \alpha]^k$, the points of the orbit $(g'^i_a(P_{\infty\, a}))_{i\ge 1}$ are $\theta$-distant to $P_{\infty\, a}$. With the notation of the proof above, for every $j\ge 1$, we define inductively:
\[ P_{j\, a}^0:= P_{j\, a},\quad P_{j\, a}^i:= (g_a|\tilde Y_{\delta^a_{i}})^{-1}( P_{j\, a}^{i+1}), \quad -j\le i<0,\]
and $P_{j\, a}^i:= (g_a|W^u_{3/4^{d'+1}}(\Omega_a;g_a))^{i+j}(P_{j\, a}^{-j})$ for every $i\le -j$.  We notice that $(P_{j\, a}^i)_{i\le 0}$ is a preobit $P_{j\, a}$ which satisfies that: 
\begin{itemize}
\item $W^u_{loc}(P_{j\, a}; g_{a})= W^u_{loc}((P_{j\, a}^i)_{i\le 0}; g_{a})$,
\item $(P_{j\, a}^i)_{i\le -1}$ is $\theta$-distant to $P_{\infty \, a}$ and $P_{j\, a}$.
\end{itemize} 
On the other hand, the points of the orbit $ (g_a^j(P_{\infty \, a}))_{j\ge 1}$ are $\theta$-distant to $P_{\infty \, a}$.

By Proposition \ref{cartecool4cpct}, the family $(W^u_{loc} (\underline \delta; g_{a}))_a$ depends $C^{d,r}$-continuously on $\underline \delta$. In particular the modulus of continuity of $\partial_a^d W^u_{loc} (\underline \delta; g_{a})$ depends continuously on $\underline \delta$ and $a$. 
By convergence of $((W^u_{loc}(P_{j \, a}; g_{a'}))_{a'})_j $ to $(W^u_{loc} (\underline \delta^a; g_{a'}))_{a'}$ in the $C^{d,r}$-topology, it comes the following:
\begin{fact}
For every $(g_a)_a\in U_\epsilon$, there exist $U>0$ and a modulus of continuity $\nu$ so that for every $a\in [-\alpha, \alpha]^k$, the hyperbolic fixed point $\Omega(g_a)$ has a $(\theta, U,\nu)$-quasi-homoclinic $C^d$-paratangency.
\end{fact}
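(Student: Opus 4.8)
The $\theta$-distance requirement with $\theta=1/4$ was already verified in the discussion preceding the statement, so the plan is only to produce a bound $U$ and a modulus of continuity $\nu$ that serve \emph{simultaneously} for every $a\in[-\alpha,\alpha]^k$ and every $(g_a)_a\in U_\epsilon$. I would exploit two compactness features: the cube $[-\alpha,\alpha]^k$ is compact, and — more crucially — the symbol space $\Delta_{d'}^{\mathbb Z}$ is compact, which allows uniform control over all admissible itineraries $\underline\delta^a$ without ever needing $a\mapsto\underline\delta^a$ to be continuous.

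First I would fix the coordinate charts and read off $U$. After shrinking $U_\epsilon$ if necessary, the hyperbolic continuations $\Omega(g_a)$ and the unstable leaves $W^u_{loc}(\underline\delta;g_a)$ exist for all $(g_a)_a\in U_\epsilon$, all $a\in[-\alpha,\alpha]^k$ and all $\underline\delta\in\Delta_{d'}^{\mathbb Z}$. Proposition \ref{cartecool} gives a $C^{d,r}$-family of charts $(\phi_a)_a$ straightening $W^u_\eta(\Omega(g_a);g_a)$ and $W^s_\eta(\Omega(g_a);g_a)$; these depend continuously on $a$ over the compact cube (and, shrinking $U_\epsilon$, on $(g_a)_a$), so they and their inverses carry a uniform $C^{d,r}$-bound. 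The tangency point $P_{\infty\, a}$ lies in a fixed small neighbourhood of $(0,0)$, since $g_a(P_{\infty\, a})$ is close to $Q=(3,1)$ while $g_a$ is $C^r$-close to $\mathring g$, which maps $(0,0)$ to $Q$ with $(0,0)$ the only preimage in the relevant strip (Remark \ref{rema1}). On that fixed neighbourhood one constructs a $C^{d,r}$-family of charts $(\psi_a)_a$ carrying $W^u_{loc}(P_{\infty\, a};g_a)$ to a horizontal segment and $P_{\infty\, a}$ to $0$; because the leaves $W^u_{loc}(\underline\delta;g_a)$ form a $C^{d,r}$-compact family indexed by the compact set $\Delta_{d'}^{\mathbb Z}\times[-\alpha,\alpha]^k$ (Proposition \ref{cartecool4cpct}), these charts and their inverses admit a uniform $C^{d,r}$-bound, independent of $\underline\delta^a$ and of $(g_a)_a\in U_\epsilon$. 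The components $(A_a)_a$, $(B_a)_a$ of $\phi_a\circ g_a\circ\psi_a^{-1}$ near $P_{\infty\, a}$ are then uniformly $C^{d,r}$-bounded, since $\|(g_a)_a\|_{C^{d,r}}$ is bounded on $U_\epsilon$. Taking $U$ to be the largest of these bounds settles the $U$-part.

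The delicate point is $\nu$. Up to the uniformly bounded coordinate changes just fixed, the function $C$ of the definition (with $C(a)=A_a(c_a,0)$, $c_a$ the critical point furnished by Fact \ref{prametrizationtangent}) is exactly the function measuring the vertical gap between the horizontalised unstable leaf $W^u_{loc}(\underline\delta^{a_0};g_a)$ and $\mathrm{Graph}\,\gamma_a$ at the tangency. By Proposition \ref{cartecool4cpct} the parameter $d$-jets $(\underline\delta,a)\mapsto\bigl(\partial_a^jW^u_{loc}(\underline\delta;g_a)\bigr)_{j\le d}$ are jointly continuous on the compact set $\Delta_{d'}^{\mathbb Z}\times[-\alpha,\alpha]^k$, hence uniformly continuous; together with the fact that $(\gamma_a)_a$ ranges only over the fixed bounded set $V_\gamma$, the implicit-function argument in the proof of Fact \ref{prametrizationtangent} (solving $\partial_x\gamma_a=0$ for the critical point, then evaluating) shows that $\partial_a^dC$ has a modulus of continuity at $a_0$ that may be chosen independently of $a_0\in[-\alpha,\alpha]^k$, of the itinerary $\underline\delta^{a_0}$ and of $(g_a)_a\in U_\epsilon$; call it $\nu$. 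Since the preceding Fact gives the $C^d$-paratangency at $a=a_0$, i.e. $C(a_0)=\partial_aC(a_0)=\cdots=\partial_a^dC(a_0)=0$, one gets $|\partial_a^dC(a)|=|\partial_a^dC(a)-\partial_a^dC(a_0)|\le\nu(|a-a_0|)$, which is precisely what a $(\theta,U,\nu)$-quasi-homoclinic $C^d$-paratangency demands.

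The one genuine obstacle is that the itinerary $\underline\delta^{a_0}$ provided by the fundamental property \ref{Ml2k} need not depend continuously (or even measurably) on $a_0$, so $C$ cannot be argued to be continuous in $a_0$. I would sidestep this entirely by never requiring such continuity and instead phrasing every estimate as a uniform bound over the full compact symbol space $\Delta_{d'}^{\mathbb Z}$ (and over the parameter cube and over $U_\epsilon$): compactness of $\Delta_{d'}^{\mathbb Z}$ plus the joint continuity of Proposition \ref{cartecool4cpct} converts ``continuous in $(\underline\delta,a)$'' into ``uniformly bounded and uniformly continuous'', which is all the proof needs.
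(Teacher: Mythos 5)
Your proof is correct and follows essentially the same route as the paper: the paper's own justification is precisely that, by Proposition \ref{cartecool4cpct}, $(W^u_{loc}(\underline\delta;g_a))_a$ depends $C^{d,r}$-continuously on $\underline\delta$ and $a$, so that compactness of $\Delta_{d'}^{\mathbb Z}\times[-\alpha,\alpha]^k$ yields a bound $U$ and a modulus $\nu$ valid for all itineraries at once, the $\theta$-separation and the approximating leaves $W^u_{loc}(P_{j\,a};g_a)$ having been set up just before. Your explicit handling of the possible discontinuity of $a_0\mapsto\underline\delta^{a_0}$ by taking uniform estimates over the whole symbol space is exactly the (implicit) mechanism in the paper; the only caveat is that your claimed uniformity of $\nu$ over all of $U_\epsilon$ is stronger than what the Fact asserts and would need extra justification, but it is not needed.
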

By Proposition \ref{quasisinkcreation}, it comes:
\begin{fact}\label{pourinduction} 
There exist $\alpha>0$,  a $C^r$-neighborhood $U_0$ of $\mathring g$, and for every $\epsilon>0$ small, there exists a neighborhood $U_\epsilon$ of 
$\{(g_{\epsilon\, a})_a:\; g\in U_0\}$ in the $C^{d,r}$-topology, so that for every $(g'_{ a})_a \in U_\epsilon$, 
for every $\mu>0$, for every large $M$ and large $N$, for every $a_0\in [-\alpha,\alpha]^k$, there exists  $(g''_a)_a\in U_\epsilon$ which satisfies:
\begin{itemize}
\item $(g''_a)_a$ is $\mu$-$C^{d,r}$-close to   $(g'_a)_a\in U_\epsilon$,
\item $g''_a=g'_a$ for every $a$ such that $(a-a_0)\notin (-2^{-N},2^{-N})^k$,
\item $g''_a$ has a periodic sink of period $M$ for every $a$ such that $(a-a_0)\in (-2^{-N-1},2^{-N-1})^k$.
\end{itemize}
 \end{fact}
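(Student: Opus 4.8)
The plan is to repackage Proposition \ref{quasisinkcreation} using the uniform $(\theta,U,\nu)$-control furnished by the two Facts just proved, after translating the paratangency parameter to the origin and rescaling.

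First I would fix the hyperbolic data. Since $\mathring g$ agrees with $\mathring f'$ near the fixed point $\mathring\Omega=(3,0)$ (which lies far from $[-2\eta,2\eta]\times\R$), we have $|\det D_{\mathring\Omega}\mathring g|<1$ and $D_{\mathring\Omega}\mathring g$ satisfies $(\mathcal D_d)$; as these are open conditions, after shrinking $U_0$ and then $U_\epsilon$, every $(g'_a)_a\in U_\epsilon$ carries a persistent hyperbolic fixed point $(\Omega(g'_a))_a$, the hyperbolic continuation of $\mathring\Omega$, with $|\det Dg'_a(\Omega)|<1$ and $(\mathcal D_d)$ for all $a\in[-\alpha,\alpha]^k$. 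By the preceding Facts, with $\theta=1/4$, for each $(g'_a)_a\in U_\epsilon$ there are a constant $U>0$ and a modulus of continuity $\nu$ such that for \emph{every} $a_0\in[-\alpha,\alpha]^k$ the family $(\Omega(g'_a))_a$ has a $(\theta,U,\nu)$-quasi-homoclinic $C^d$-paratangency at $a=a_0$, the tangency point $P_{\infty\, a}$ having image under $g'_a$ close to $Q=(3,1)$.

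Next I would invoke Proposition \ref{quasisinkcreation}. Given $\mu>0$ it suffices to treat $\mu$ small: the first conclusion of the Fact only weakens as $\mu$ grows while the other three are $\mu$-independent, so a solution obtained for a small $\mu_0=\mu_0((g'_a)_a)$ also serves for every $\mu\ge\mu_0$. For $\mu$ small, Proposition \ref{quasisinkcreation} yields $\alpha_0>0$ depending only on $(\mu,\theta,U,\nu)$, hence only on $(g'_a)_a$ and $\mu$. Fix $a_0\in[-\alpha,\alpha]^k$ and pass to the translated family $(\tilde g_a)_a:=(g'_{a_0+a})_a$: translation affects none of the norms entering $(\theta,U,\nu)$, so $(\Omega(\tilde g_a))_a$ has a $(\theta,U,\nu)$-quasi-homoclinic $C^d$-paratangency at $a=0$ and still satisfies $(\mathcal D_d)$ and $|\det D\tilde g_a(\Omega)|<1$. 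Now pick $N$ large enough that $\alpha':=2^{-N-1}<\alpha_0$. Proposition \ref{quasisinkcreation} applied with this $\alpha'$ provides, for every large $M$, a family $(\tilde g'_a)_a$ which is $\mu$-$C^{d,r}$-close to $(\tilde g_a)_a$, equal to $(\tilde g_a)_a$ outside $(-2\alpha',2\alpha')^k=(-2^{-N},2^{-N})^k$, and having a periodic sink of period $M$ for every $a\in(-\alpha',\alpha')^k=(-2^{-N-1},2^{-N-1})^k$. Undoing the translation, $g''_a:=\tilde g'_{a-a_0}$ is $\mu$-$C^{d,r}$-close to $(g'_a)_a$, equals $g'_a$ whenever $(a-a_0)\notin(-2^{-N},2^{-N})^k$, and has a sink of period $M$ whenever $(a-a_0)\in(-2^{-N-1},2^{-N-1})^k$. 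Finally $(g''_a)_a\in U_\epsilon$ because it is a $C^{d,r}$-small, parameter-localized modification of $(g'_a)_a\in U_\epsilon$ and $\mu$ is small (shrinking $U_\epsilon$ at the outset so as to leave room if necessary).

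I do not expect a genuine obstacle at this final step: the substance lies in the two preceding Facts and in Proposition \ref{quasisinkcreation} (itself resting on Claim \ref{unpuit}). The only point needing care here is that the constants $(\theta,U,\nu)$, and therefore $\alpha_0$, are independent of the base parameter $a_0\in[-\alpha,\alpha]^k$, so that the single dyadic scale $2^{-N}$ works uniformly over the whole parameter cube; this uniformity is precisely what the fundamental property of the parablender (as opposed to a mere blender) was designed to deliver.
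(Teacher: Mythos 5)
Your proposal is correct and follows the same route the paper takes: the paper derives Fact \ref{pourinduction} in one line from Proposition \ref{quasisinkcreation} together with the two preceding Facts (uniform $(\theta,U,\nu)$-quasi-homoclinic $C^d$-paratangency at every $a_0\in[-\alpha,\alpha]^k$ and persistence of $(\mathcal D_d)$ and dissipativity at $\Omega$), exactly the ingredients you assemble. The details you supply -- translating the parameter so that $a_0$ becomes the origin, noting that $(\theta,U,\nu)$ and hence $\alpha_0$ are independent of $a_0$, and choosing $N$ with $2^{-N-1}<\alpha_0$ -- are the intended (and correct) way to fill in the paper's implicit argument.
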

\begin{rema}\label{rema2}
All the sinks have their orbits which do not intersect $[3.5,4.5]\times \R$.
\end{rema}
Applying this fact to every $a_0\in [-\alpha,\alpha]^k\cap 2^{-N+1} (\Z^k\setminus \{0\})$, the persistence of the periodic sinks shows the first step of the following induction hypothesis.

\paragraph{Induction hypothesis} 
 There exists an open and dense subset $O^n_\epsilon$ in $U_\epsilon$, so that for every $(g_a)_a\in O^n_\epsilon$, there exist increasing sequences $(N_i)_{n\ge i\ge 0}$ and $(M_i)_{n\ge i\ge 0}$ satisfying that for every $a\in [-\alpha,  \alpha]^k\cap \alpha\cdot 2^{-N_i+1} (\Z^{k}\setminus \{0\}+[-1/4, 1/4]^k)$, the map $g_a$ has a periodic sink of period at least $M_i$. Moreover, for every $g\in U_0$, the set $\{(g_a)_a\in O^n_\epsilon:\; g_0=g\}$ is open and dense in $\{(g_a)_a\in U_\epsilon:\; g_0=g\}$.

Note that to push one step forward the induction hypothesis, it suffices to reapply Fact \ref{pourinduction}.

We notice that the Lebesgue measure of $[-\alpha,  \alpha]^k\cap \alpha\cdot 2^{-N_i+1} (\Z^{k}\setminus \{0\}+[-1/4, 1/4]^k)$ is equivalent to $2^{-k}\alpha^k$ when $N_i$ is large. Hence the series $\sum_{i\ge 0} \leb([-\alpha,  \alpha]^k\cap \alpha\cdot 2^{-N_i+1} (\Z^{k}\setminus \{0\}+[-1/4, 1/4]^k))$ is absolutely divergent.  
Moreover these Borel subsets are independent. Thus, by the second Borel-Cantelli Lemma, the set 
$\cap_{j\ge 0} \cup_{i\ge j} [-\alpha,  \alpha]^k\cap \alpha\cdot 2^{-N_i+1} (\Z^{k}\setminus \{0\}+[-1/4, 1/4]^k)$ contains Lebesgue almost every point in $[-\alpha,  \alpha]^k$. This shows that for every $(g_a)_a\in \mathcal R_\epsilon:=\cap_n O^n_\epsilon$, for Lebesgue almost every $a\in [-\alpha,\alpha]^k$, 
the map $g_a$ has infinitely many sinks, with orbit in $(\mathbb Z/6\Z\setminus [3.5,4.5])\times\R$.
The set $\mathcal R:=\cup_{n>1/\epsilon} \mathcal R_{1/n}$ is Baire residual in the open set $U:= \cup_{n>1/\epsilon} U_{1/n}$. 
 Furthermore, the set $U$ has its closure which contains the constant families $(g)_a$ for every $g\in U_0$. Also for every $a\in [-\alpha,\alpha]^k$ 	and $(g_a)_a\in U$, the map $g_a$ has infinitely many sinks and sends the annulus $\mathbb S^1\times[-2,2]$ into $\mathbb S^1\times[-7/4,7/4]$. 

Also we notice that for every $g\in U_0$, the set $\{(g_a)_a\in \mathcal R:\; g_0=g\}$ is Baire residual in the open set $\{(g_a)_a\in U:\; g_0=g\}$.

This is almost all the properties of Theorem \ref{main}: we actually want that for every $(g_a)_a\in\mathcal R$, for every $a\in [-\alpha,\alpha]^k\setminus\{0\}$ (and not almost every $a$), the map $g_a$ has infinitely many sinks.

To get this extra conclusion, we shall work with $2^k$-copies of the map $\mathring g$ in a same annulus. By modifying $\mathring g$ on $\mathbb S^1\times([-3,3]\setminus [-2,2]$, we can suppose that it sends $\mathbb S^1\times\{\pm 3\}$ onto them self, and leaves invariant each of these circles.
Now consider the map:
\[\mathring G(x,y)\in \mathbb S^1\times[-3,-3+6\cdot 2^k]\mapsto \mathring g(x,y-6j)+6j, \quad \text{if}\; y\in [-3+6j ,3+6j], \; j\in \{0,\dots,2^k-1\}.\]
For $C^r$-perturbation $G$ of $\mathring G$, we define similarly $ G_{\epsilon \, a}$.

 Now, Fact \ref{pourinduction} can be applied for each of the $2^k$-copies of $\mathring g$ in   $\mathring  G$ with the same constants, but instead of taking the same $a_0$ to each of the $j^{th}$-copies, we take $a_0+2^{-N}(i_j)_{i=1}^k$ with $j=\sum_{i=1}^k i_j 2^i$ and $i_j\in \{0,1\}$, with the case $a_0=0$ added for $j\not=0$.

This defines similarly a neighborhood $\hat U_0$, and for $\epsilon>0$ small, a neighborhood $\hat U_\epsilon$ of  $\{(G_{\epsilon\, a})_a:\; G\in \hat U_0\}$, which satisfies the following induction hypothesis on $m\ge 1$.
 
\paragraph{Induction hypothesis} There exist sequences of  an open and dense subset $\hat O^m_\epsilon$ in $\hat U_\epsilon$, so that for every $(G_a)_a\in \hat O^m_\epsilon$, so that for every $a_0\in [-\alpha,\alpha]^k\setminus [-\alpha /m,\alpha/m]^k$, the map $G_a$ has a periodic sink of period at least $m$.
Moreover, for every $G\in \hat U_0$, the set $\{(G_a)_a\in \hat O_\epsilon ^m:\; G_0= G\}$ is open and dense in $\{(G_a)_a\in \hat U_\epsilon:\; G_0= G\}$.
 
The main Theorem is then satisfied  with $\hat U_N:= \{(f_{\alpha\cdot a})_a:\; (f_a)_a \in \hat O_\epsilon ^N\}$ and $\hat U:= \{(f_{\alpha\cdot a})_a:\; (f_a)_a \in \hat U_\epsilon\}$.

\section{Proof of Theorem \ref{theo2}: the diffeomorphism case}\label{proof:theo2}
For this end it suffices to show the following result, with $B$ the unit ball of $\R^n$ and $B'\Subset B$:

\begin{theoprime}\label{main'}
For all $\infty> r>d\ge 0$ or $\infty> r=d\ge 2$, for all $k\ge 0$, there exist a nonempty open set $V$ in $Diff^r (B,B')$, an open set $\hat V$ in $C^d(\R^k,Diff^r (B,B'))$, and a family of open dense subsets $(\hat V_N)_{N\ge 1}$ in $\hat V$ satisfying that:
\begin{itemize}
\item  for every $(f_a)_a\in \hat{V}_N$, for every $a\in [-1,1]^k \setminus [-1/N,1/N]^k$, the map $f_a$ has at least $N$-sinks,
\item for every $f\in V$, the set $\{(f_a)_a\in \hat V_N:\; f_0=f\}$ is open and dense in $\{(f_a)_a\in \hat V:\; f_0=f\}$.
\end{itemize}\end{theoprime}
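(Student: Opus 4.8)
The plan is to realize each surface local diffeomorphism provided by Theorem~\ref{main} as a factor --- through the holonomy of a strong stable foliation --- of a diffeomorphism of the ball $B$, and to transport the open--dense structure of Theorem~\ref{main} along this correspondence. First I would perform the solenoid lift. Using that the maps in $U$ cover the annulus $\mathbb A$ with a fixed degree $\kappa$ and send it into itself, I build, in the spirit of Smale--Williams, a solid--torus--type neighbourhood $N\subset B$ and, for $(g_a)_a$ in a $C^{d,r}$--neighbourhood of $\hat U$, a family $\iota((g_a)_a)=:(G_a)_a$ with $G_a\in Diff^r(B,B')$ and $G_a(N)\Subset N$, such that $G_a|N$ is partially hyperbolic with a strong stable bundle $E^{ss}$, contracted at a fixed rate $\beta$, and a two--dimensional center bundle $E^c$ mapped isomorphically onto $T\mathbb A$ by the projection $\varpi\colon N\to\mathbb A$ along the strong stable foliation $\mathcal F^{ss}$, with $\varpi\circ G_a=g_a\circ\varpi$; the $\kappa$ local branches of $g_a$ are routed disjointly in the transverse directions (the $e^{i\theta}$--device of the classical solenoid). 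Taking $\beta$ small enough that $E^{ss}$ is $(r{+}1)$--normally contracted relative to $E^c$ (e.g.\ $\beta<(2/3)^{r+1}$), the invariant foliation theorem of Hirsch, Pugh and Shub makes $\mathcal F^{ss}$ of class $C^{r+1}$; the impossibility of choosing such a $\beta$ uniformly is precisely why the construction stops short of $r=\infty$. Finally, $\iota$ is continuous and respects the $C^d$--in--$a$, $C^r$--in--$z$ structure, each $\iota(g)$ leaves the model fibration invariant (so that fibration is its strong stable foliation), and $\varpi$ descends $\iota(g)$ to $g$.

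I would then set up the quotient correspondence for perturbed families. As partial hyperbolicity and $(r{+}1)$--normal contraction are open, for every family $(G_a)_a$ in a small $C^{d,r}$--neighbourhood $\mathcal W$ of $\iota(\hat U)$ the foliation $\mathcal F^{ss}_{G_a}$ persists, is $C^{r+1}$, has leaf space a copy of $\mathbb A$, and its holonomy projection $\varpi_{G_a}$ semiconjugates $G_a$ to a $C^r$--local diffeomorphism $g_a=:\Pi((G_a)_a)$ with $g_a(\mathbb A)\Subset\mathbb A$. The map $\Pi\colon\mathcal W\to C^d(\R^k,C^r(\mathbb A,\mathbb A))$ is continuous, satisfies $\Pi\circ\iota=\mathrm{id}$, and is \emph{open}: straightening $\mathcal F^{ss}_{G_a}$ by a $C^r$--chart puts $G_a$ in product form $(x,v)\mapsto(g_a(x),H_a(x,v))$, and replacing $(g_a)_a$ by any $C^{d,r}$--nearby $(\tilde g_a)_a$ while keeping $H_a$ produces a nearby family with the same strong stable foliation whose descent is $(\tilde g_a)_a$, an operation that fixes the slice $a=0$. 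Shrinking $\mathcal W$ so that $\Pi$ and the single--map quotient $\Pi_0$ land in the open sets of Theorem~\ref{main}, I put $\hat V:=\Pi^{-1}(\hat U)$, $\hat V_N:=\Pi^{-1}(\hat U_N)$, $V:=\Pi_0^{-1}(U)$; then $\hat V$ is open and contains $\iota(\hat U)$, $V$ is open, nonempty, and equals $\{G_0:(G_a)_a\in\hat V\}$, $\hat V_N$ is open in $\hat V$ and dense (openness and density being pulled back through $\Pi$), and likewise for the slices $\{(G_a)_a\in\hat V:G_0=f\}$, $f\in V$, using the slice--preserving section of $\Pi$ and the second bullet of Theorem~\ref{main}.

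It remains to transfer sinks: if the descent $g_a=\Pi((G_a)_a)$ has at least $N$ periodic sinks, then so does $G_a$. If $p$ is a periodic sink of $g_a$ of period $M$, its $g_a$--orbit is a finite cyclically permuted set, so $\varpi_{G_a}^{-1}$ of it is a union of $M$ strong stable leaves cyclically mapped one into the next by $G_a$ with contraction of order $\beta$; hence the first return map $G_a^M$ on one such leaf $L$ is a deep contraction of $L$ into itself, with a fixed point $P$ of period $M$ for $G_a$. At $P$ the invariant splitting gives $D_PG_a^M=D_PG_a^M|_{E^{ss}}\oplus D_PG_a^M|_{E^c}$; the first block is strongly contracting, and via the isomorphism $D_P\varpi_{G_a}\colon E^c_P\to T_p\mathbb A$ (with kernel $E^{ss}_P$) the second is conjugate to $D_pg_a^M$, whose eigenvalues have modulus $<1$. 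So $P$ is a sink of $G_a$, and since $\varpi_{G_a}$ sends the $G_a$--orbit of $P$ onto the $g_a$--orbit of $p$, distinct sinks of $g_a$ give distinct sinks of $G_a$. Combined with the first bullet of Theorem~\ref{main} for $\hat U_N$, this yields the first bullet of Theorem~\ref{main'} for $(G_a)_a\in\hat V_N$ and $a\in[-1,1]^k\setminus[-1/N,1/N]^k$; the preceding paragraph yields the second. Since any compact $n$--manifold, $n\ge 3$, contains a ball, extending $G$ outside $B$ then deduces Theorem~\ref{theo2}.

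The main obstacle is the regularity bookkeeping around the quotient. First, the solenoid lift must route the $\kappa$ branches injectively in the transverse directions while leaving the center component exactly equal to $g$: this is concrete but slightly delicate for $n=3$, where a single transverse direction cannot separate the $\kappa$ branches of the underlying circle covering (by an intermediate--value argument), so one exploits the $y$--coordinate of $\mathbb A$ jointly with the new direction, as in Smale--Williams. Second --- the heart of the matter --- one must check that $\mathcal F^{ss}_{G_a}$ is genuinely $C^r$ (indeed $C^{r+1}$) for \emph{every} perturbed family, which forces $\beta<(2/3)^{r+1}$ and hence $r<\infty$, and that $\Pi$ is truly open, i.e.\ that the localized perturbations used in the proof of Theorem~\ref{main} to create sinks lift through the strong stable product structure; together these amount to the assertion that the strong stable holonomy furnishes a bona fide $C^r$--reduction of the diffeomorphism case to the surface local diffeomorphism case already settled.
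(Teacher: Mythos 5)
Your overall route is the same as the paper's: lift the annulus families of Theorem \ref{main} to skew products over $\mathbb A$ strongly contracted in the extra directions, use persistence of the strong stable fibration to semiconjugate every nearby family to a family of annulus local diffeomorphisms, lift perturbations of the quotient while keeping the fibration (your ``openness of $\Pi$'' is exactly the paper's $\tau_a$-trick, and it fixes the slice $a=0$ as in the paper), and transfer the sinks upstairs. Two steps, however, do not go through as you state them. First, the regularity of the quotient: you take $\hat V$ open in the $C^{d,r}$-topology and assert that every family in it has a $C^{r+1}$ (or at least $C^r$) strong stable foliation, hence a $C^r$ projection and a descent lying in $\hat U$. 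The invariant-distribution argument loses one derivative: the Grassmannian lift $(z,P)\mapsto (\tilde g(z),[D_z\tilde g](P))$ of a $C^r$ map is only of class $C^{r-1}$, so for a perturbation that is merely $C^{d,r}$-close the projection and the quotient come out one degree too rough to land in $\hat U$. This is precisely why the paper's Lemma \ref{fibration} assumes the perturbation $C^{r+1}$-close (and $C^{r+d+1}$ data in $(a,z)$ for the parametrized statement); that loss of a derivative, and not only the $r$-dependence of the fiber contraction, is what blocks $r=\infty$ here. Your scheme needs the same bookkeeping of topologies (or a density-of-smoother-families device) before $\Pi$, $\Pi^{-1}(\hat U)$ and the density of $\Pi^{-1}(\hat U_N)$ are legitimate.

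Second, the case $n=3$. As you observe, one transverse direction cannot separate the branches by a device depending only on the base point; but ``exploiting the $y$-coordinate'' means adding a bump to the second (annulus) component of the embedding, and this destroys the exact semiconjugacy $\pi\circ G_a=g_a\circ\pi$ on which your transfer of sinks rests. The paper resolves the tension by modifying the map only over a strip $I\times[-1,1]$ and invoking two properties engineered inside the proof of Theorem \ref{main}, which are not visible in its statement and hence not available to a purely black-box use of it: the annulus maps are injective on every vertical fiber $\{\theta\}\times[-1,1]$ (Remark \ref{rema1}), which is what makes the $e^{i\theta}$-device work at all (already for $n\ge 4$), and all the created sinks have orbits avoiding $I\times[-1,1]$ (Remark \ref{rema2}), so the lifted attracting cycles survive a modification whose support they never meet. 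Without importing these two facts your $n=3$ construction cannot be closed; with them, your argument coincides with the paper's proof of Theorem \ref{main'} and hence of Theorem \ref{theo2}.
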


Indeed, for every $n$-manifold, we can fix two balls  $B'\Subset B\Subset M$ and consider the open set $W$ formed by diffeomorphisms whose restrictions to $B$ are in $V$. This set is open, and non-empty since we can extend by the identity outside of a small neighborhood of $cl(B)$ any diffeomorphisms of $V$. 
Let $\hat W$ and $\hat W_n$ be the subsets of $C^d((-1,1)^k, Diff^r(M))$ formed by families of diffeomorphisms whose restrictions to $B'$ are equal to those of the elements in $\hat V$ and $\hat V_n$ respectively. These subsets are open, and $\hat W_n$ is dense in $\hat W$. Note that $\mathcal R:= \cap_n \hat W_n$ is Baire residual in $\hat W$ and satisfies Theorems \ref{theo2} when $M$ is of dimension $n$.

\begin{proof}[Proof of Theorem \ref{main'}']
For the sake of simplicity, we assume here the annulus $\mathbb A$ equal to $\mathbb S^1 \times[-1,1]$ with $\mathbb S^1 := \R/6 \Z$. 

By Theorem \ref{main} and remarks \ref{rema1} and \ref{rema2}, for all $\infty> r>d\ge 0$ or $\infty> r=d\ge 2$, for all $k\ge 0$, there exist $\alpha>0$, an open interval $I\subset \mathbb S^1$, a nonempty open set $U$ in $C^r (\mathbb A,\mathbb A)$ formed by local diffeomorphisms, an open set  $\hat U$ in $C^d(\R^k, C^r(\mathbb A,\mathbb A))$, and a family of open dense subsets $(\hat U_N)_N$ in $\hat U$  satisfying that:
\begin{enumerate}[$(i)$]
\item  For every $(f_a)_a\in \hat{U}_N$ for every $a\in[-1,1]^k\setminus [-\frac1N, \frac1N]^k$, the map $f_a$ has $N$-sinks.
\item  For every $f\in U\cup \bigcup_{\hat U}\{f_a:a\}$, for every $\theta \in \mathbb S^1$, the map $f|\{\theta\} \times[-1,1]$ is injective.  
\item  All the sinks have their orbits outside of $I\times[-1,1]$.
\item For every $g\in U$, the set $\{(f_a)_a\in {\hat U}_N:\; f_0=g\}$ is open and dense in $\{(f_a)_a\in {\hat U}:\; f_0=g\}$. 
\end{enumerate}

Let $J:=(-2,2)$.


Let $\rho\in C^\infty(\mathbb A, J^{n-2})$ and $\lambda>0$ be small. For $f\in  U\cup \{f_a\colon (f_a)\in \hat U,\; a\in \R^k\}$, 
 we consider the map:
\[\tilde f\colon \mathbb A\times J^{n-2}\ni (z,h)\mapsto (f(z), \lambda h+\rho(z))\in \mathbb A\times J^{n-2}.\]

The map $\tilde f$ is still a local diffeomorphism. The bundle $\mathbb A\times \R^{n-2}\to\mathbb A$ is left invariant by $\tilde f$. The action induced by $\tilde f$ on the base  is $f$.

 As the fibers of this bundle are much more contracted than the dynamics $f$ of the base, this invariant fibration is persistent for perturbations of the dynamics $\tilde f$:

\begin{lemm}\label{fibration}
For every $\infty>r>d\ge 0$ or $\infty> r=d\ge 2$, for every $\lambda$ small enough, for every $f$ of class $C^{r+1}$, for every $C^{r+1}$-perturbation $\tilde g$ of $\tilde f$, there exist a neighborhood $\mathbb A'\subset \mathbb A$ of $f(\mathbb A)$, a $C^r$-map $\pi_{\tilde g}\colon \mathbb A'\times J^{n-2}\to \mathbb A$ and a $C^r$-map $g\in C^r(\mathbb A, \mathbb A)$ which is $C^r$-close to $f$ and so that:
\[\pi_{\tilde g} \circ \tilde g =g\circ \pi_{\tilde g}.\] 

Moreover, if $(f_a)_a\in \hat U$ satisfies that $(a,z)\mapsto f_a(z)$ is of class $C^{r+d+1}$, and $(a,z)\mapsto \tilde g_a(z)$ is $C^{r+d+1}$-close to $(a,z)\mapsto \tilde f_a(z)$, then $(\pi_{\tilde g_a})_a$ is of class $C^{r,d}$ and $(g_a)_a$ is $C^{r,d}$-close to $(f_a)_a$.
\end{lemm}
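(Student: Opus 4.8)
The plan is to realize the statement as an instance of the standard theory of persistence of normally hyperbolic (here, normally contracting) fibrations, applied fiberwise over the base $\mathbb A$, together with a parametrized version of the contraction mapping argument in order to track the dependence on $a$. First I would set up the graph transform: write a candidate invariant bundle for $\tilde g$ as the graph of a section $\sigma\colon \mathbb A'\times J^{n-2}\to \text{(vertical directions over the base)}$, more precisely look for a map $\pi_{\tilde g}$ as a perturbation of the projection $\pi_0\colon (z,h)\mapsto z$ of the form $\pi_{\tilde g}(z,h)=z+ u(z,h)$ with $u$ small in $C^r$, and require $\pi_{\tilde g}\circ \tilde g = g\circ \pi_{\tilde g}$ for some $g$ close to $f$. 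Eliminating $g$ (it is determined by $g = \pi_{\tilde g}\circ \tilde g\circ (\text{a right inverse of }\pi_{\tilde g})$ on the image, i.e. $g$ is read off from the action of $\tilde g$ on the leaf space), this becomes a fixed point equation $u = \mathcal T_{\tilde g}(u)$ where the graph transform operator $\mathcal T_{\tilde g}$ gains a factor comparable to $\lambda$ times the hyperbolicity constants of $f$ at each application, because the fiber direction is contracted by $\sim\lambda$ while the base dynamics $f$ expands/contracts by bounded amounts independent of $\lambda$. Here one uses crucially that $\tilde f$ (hence $\tilde g$) is a local diffeomorphism with $f(\mathbb A)\Subset \mathbb A$, so that the relevant inverse branches are well defined on a neighborhood $\mathbb A'$ of $f(\mathbb A)$; this is exactly the structure exploited in Theorem \ref{Przy} and Proposition \ref{cartecool4cpct}.

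Next I would verify the contraction estimates. The point is a rate inequality: the fiber contraction rate $\lambda$ must beat the maximal expansion rate of the section equation induced by the base map $f$; since $f$ is fixed (ranging in a fixed $C^{r+1}$-bounded set) and $\lambda\to 0$, for $\lambda$ small enough $\mathcal T_{\tilde g}$ is a contraction on a small ball of the Banach space of $C^r$-sections $u$ with $\|u\|_{C^r}$ small, for all $\tilde g$ in a $C^{r+1}$-neighborhood of $\tilde f$. One needs the perturbation to be $C^{r+1}$ (not merely $C^r$) so that the fixed section comes out $C^r$: this is the usual loss-of-one-derivative in graph transform / normal hyperbolicity arguments (the $C^r$-section theorem of Hirsch–Pugh–Shub). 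The unique fixed point $u=u_{\tilde g}$ gives $\pi_{\tilde g}$, and $g=g_{\tilde g}$ is then recovered as a $C^r$-map $C^r$-close to $f$; the semiconjugacy $\pi_{\tilde g}\circ \tilde g = g\circ \pi_{\tilde g}$ holds by construction.

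For the parametrized statement I would run the very same fixed point argument in the Banach space $C^d(\R^k, C^r(\cdot))$ (as defined in Section \ref{Paratangencyetprelim}): if $(a,z)\mapsto \tilde g_a(z)$ is $C^{r+d+1}$-close to $(a,z)\mapsto \tilde f_a(z)$, then the graph transform operator acts on families $(u_a)_a$ and is still a contraction for $\lambda$ small, because differentiating the fixed point equation up to $d$ times in $a$ only brings in lower-order data that are controlled by the same rate inequality (the $\lambda$-contraction dominates, and each $a$-derivative costs one more $z$-derivative of $\tilde g_a$, hence the hypothesis $C^{r+d+1}$). Uniqueness of the fixed point forces $(\pi_{\tilde g_a})_a$ to be of class $C^{d,r}$, equivalently $C^{r,d}$ in the paper's notation, and $(g_a)_a$ to be $C^{d,r}$-close to $(f_a)_a$.

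The main obstacle is bookkeeping the interplay between the regularity loss and the parameter derivatives: one must choose the target Banach space (sections of class $C^r$, in families of class $C^d$) so that the graph transform is simultaneously (i) well-defined, using $f(\mathbb A)\Subset\mathbb A$ and the local-diffeomorphism inverse branches, (ii) a contraction, using that $\lambda$ is small relative to the fixed hyperbolicity bounds of $f$ and its $a$-derivatives, and (iii) regularity-preserving, which is why the hypotheses ask for $C^{r+1}$ resp. $C^{r+d+1}$ closeness rather than $C^r$ resp. $C^{d,r}$. Once the correct spaces and rate inequalities are written down, the conclusion is the standard invariant-section theorem and I would not carry out the routine estimates in detail.
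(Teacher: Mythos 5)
Your proposal is correct in its essential mechanism (a normally contracting fibration persists, with $C^r$ regularity obtained from a rate condition that holds because $\lambda$ can be taken small relative to the fixed $C^{r+1}$-bounds of $f$, at the cost of one derivative), but it is implemented differently from the paper. The paper does not run a graph transform on the projection $\pi_{\tilde g}$ itself: it lifts the dynamics to the Grassmannian bundle $\mathbb A\times J^{n-2}\times Gr(n-2,n)$ via $\tilde F\colon (z,P)\mapsto(\tilde f(z),[D_z\tilde f](P))$, observes that the section $N_0=\mathbb A\times J^{n-2}\times\{(0,0)\times\R^{n-2}\}$ is $r$-normally expanded for $\lambda$ small, and invokes Theorem~3.1 of \cite{berlam} to get a $C^r$ invariant strong stable direction field $E^s_{\tilde g}$ for any $C^{r+1}$-perturbation; this field is then integrated into a foliation $\mathcal F_{\tilde g}$, and $\pi_{\tilde g}$ is the holonomy onto $\mathbb A\times\{0\}$ (using $f(\mathbb A)\Subset\mathbb A$ to guarantee each leaf over $\mathbb A'\times J^{n-2}$ meets $\mathbb A\times\{0\}$ once), with $g:=\pi_{\tilde g}\circ\tilde g(\cdot,0)$. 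The parametrized statement is handled by applying the same persistence theorem to $N_0\times(-1,1)^k$ under the map extended trivially in $a$, rather than by rerunning a contraction in $C^d(\R^k,C^r(\cdot))$ as you suggest; both give the same conclusion and your accounting of why each $a$-derivative costs one $z$-derivative (hence $C^{r+d+1}$) is consistent with the hypotheses. The one place where your plan is genuinely vague is the elimination of $g$ from the fixed-point equation: as written, $u=\mathcal T_{\tilde g}(u)$ is not yet a well-posed contraction because $g$ is defined only through the unknown leaf space, and $g$ need not be invertible (it is a covering of the annulus), so "a right inverse of $\pi_{\tilde g}$" must be replaced by the invariance condition on fibers ($\pi(\zeta)=\pi(\zeta')\Rightarrow\pi(\tilde g\zeta)=\pi(\tilde g\zeta')$) or by a pullback graph transform on candidate strong-stable discs. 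This is fixable by standard means, and the paper's detour through the Grassmannian is precisely a way of sidestepping it: the invariant object there is a genuine section with no auxiliary unknown, and integrability of the resulting line (plane) field does the rest. What your route buys is self-containedness (no appeal to \cite{berlam}); what the paper's route buys is a clean separation between the persistence statement and the definition of the semiconjugacy.
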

\begin{proof}
Let us show the first statement of this lemma. We will see that the second part is similar.

Let $\tilde g$ be as in the above statement. The differential $D_z\tilde g$ acts canonically on the Grassmannian $Gr(n-2,n)$ of $(n-2)$-planes of $\R^n$ as a map denoted by $[D_z\tilde g]$. We notice that $[D_z\tilde f]$ fixes the point $\{0,0\}\times \R^{n-2}\in Gr(n-2,n)$. 
The differential $D_{\{0,0\}\times \R^{n-2}} [D_z\tilde f]$ is expanding by a factor at least $\Lambda$ large when $\lambda$ is small. We suppose the perturbation small enough so that the same holds for $D_{\{0,0\}\times \R^{n-2}} [D_z\tilde g]$. 

We notice that the manifold $N_0:=\mathbb A\times J^{n-2}\times \{(0,0)\times \R^{n-2}\}$ of $\mathbb A\times J^{n-2}\times Gr(n-2,n)$ is left invariant by the $C^r$-map:
\[\tilde F\colon (z,P)\mapsto (\tilde f(z),[D_z\tilde f](P)).\]
We notice that $D\tilde F|TN_0=(D\tilde f|T(\mathbb A\times J^{n-2}),0)$. Hence the action of $D\tilde f$ on the tangent bundle is dominated by a certain constant $K$ independent of $\lambda$. 
On the other hand the action of $D\tilde f$ on the normal bundle is expanded by a factor at least $\Lambda$.  Hence for $\lambda $ small enough, the submanifold $N_0$ is $r$-normally expanded by $\tilde F$ (see definition in \cite{berlam}). 

For $\tilde g$ $C^{r+1}$-close to $\tilde f$, we regard the following map.  
\[\tilde G\colon (z,P)\mapsto (\tilde g(z),[D_z\tilde g](P)).\]
By Theorem 3.1 of \cite{berlam}, there exists a manifold $N_{\tilde g}$ which is $C^r$-close to $N_0$ and left invariant by $\tilde G$. In particular, $N_{\tilde g}$ is the graph of a map $E^s_{\tilde g}\in C^r(\mathbb A\times J^{n-2}, Gr(n-2,n))$ which is $C^r$-close to the constant map $z\mapsto \{0,0\}\times \R^{n-2}$.
By invariance, we have:
\[D\tilde g(E^s_{\tilde g}(z))=E^s_{\tilde g}(\tilde g(z)).\]
The field $E^s_{\tilde g}$ is actually the strong stable direction. It is well known that this direction is integrable (see \cite{Yoyointro}). Let $\mathcal F_{\tilde g}$ be the foliation integrating $E^s_{\tilde g}$. This foliation is $C^r$-close to the one defined by the fibration $\mathbb A\times J^{n-2} \to \mathbb A$. 

We recall that $f(\mathbb A)\Subset \mathbb A$. Let $\mathbb A'$ be a compact neighborhood of $f(\mathbb A)$ included in the interior of $\mathbb A$. 
Consequently, for $\tilde g$ close enough to $\tilde f$, it holds:
\[\tilde g(\mathbb A\times J^{n-2})\subset \mathbb A'\times J^{n-2}.\]
Also, for $\tilde g$ close enough to $\tilde f$, for every $z\in \mathbb A'\times J^{n-2}$, the $\mathcal F_{\tilde g}$-leaf of $z$ intersects $\mathbb A\times\{0\}$ at a unique point $\pi_{\tilde g}(z)$. 

The map $\pi_{\tilde g}$ is of class $C^r$ and $C^r$-close to the first coordinate projection.

Let $g\colon z\in \mathbb A \mapsto \pi_{\tilde g}\tilde g(z,0)\in \mathbb A$. We notice that $g$ is $C^r$-close to $f$. Clearly it holds:
\[\pi_{\tilde g} \circ \tilde g =g\circ \pi_{\tilde g}.\] 

For the second part of the Lemma, we consider $(f_a)_a\in \hat U$. We notice that the submanifold $N_0\times (-1,1)^k$ is left invariant and $r$-normally expanded by the map:
\[  (z,P,a)\in (\mathbb A\times J^{n-2})\times Gr(n-2,n) \times (-1,1)^k\mapsto (\tilde f(z),D\tilde f(u), a)\in  (\mathbb A\times J^{n-2})\times Gr(n-2,n)\times (-1,1)^k.\]
Then we conclude similarly.
\end{proof}

\begin{rema}
Since the unstable manifolds of the blender are robustly tangent to the stable manifolds of the saddle points, the same holds with their preimages by the fibration. 
Note that the preimages of the stable manifolds by the fibration are still stable manifolds for the diffeomorphism, whereas the preimages of the unstable manifolds only contain the corresponding one-dimensional unstable manifolds of the diffeomorphism.
\end{rema}

Let $U'\Subset U$ and $\hat U'\subset \hat U$ so that $cl(\hat U'\cup \{(f)_a\colon f\in U'\})\Subset cl(\hat U\cup \{(f)_a\colon f\in U\})$. Let $V$ be a small neighborhood of $\{\tilde f\colon f\in U\}$ in $C^r(\mathbb A \times J^{n-2}, \mathbb A \times J^{n-2})$ and let $\hat V$ be a small neighborhood of $\{(\tilde f_a)_a\colon (f_a)_a\in \hat U\}$ in $C^d((-1,1)^k, C^r(\mathbb A \times J^{n-2}, \mathbb A \times J^{n-2}))$ so that:
\begin{itemize}
\item The closure of $\hat V$ contains $\{(\tilde g)_a\colon \tilde g \in V\}$, and Lemma \ref{fibration} defines a  family $(g_a)_a$ which belongs to $\hat U$. 
\end{itemize}
Hence by Theorem \ref{main}, for every $N\ge 0$, there exists a $(g'_a)_a$ which is $C^{d,r}$-close to $(g_a)_a$ satisfying $g'_0=g_0$ and such that for every $a\in [-1,1]^k\setminus[-1/N,1/N]^k$, the map $g'_a$ has $N$ different attracting cycles. For every $z\in \mathbb A\times J^{n-2}$, let $\tau_a(z)$ be the vector in $\R^2\times \{0\}$ such that:
\[\pi_{\tilde g_a}(\tilde g_a(z)+\tau_a(z))= g'_a\circ \pi_{\tilde g_a}(z)\]
Note that $\tau_0=0$. 
We remark also that the map $\tilde g'_a:= \tilde g_a+\tau_a$ is of class $C^r$, and that the family $(\tilde g'_a)_a$ is of class $C^{d,r}$ and $C^{d,r}$-close to $(\tilde g_a)_a$. Moreover we have $\pi_{\tilde g'_a}=\pi_{\tilde g_a}$ and:
\[\pi_{\tilde g_a'}\circ \tilde g_a'= g_a'\circ \pi_{\tilde g_a'}.\]
Consequently the map $\tilde g'_a$ has at least $N$ different attracting cycle for every $a\in [-1,1]^k$. 
Also $\tilde g_0'=\tilde g_0$. By persistence of the attracting cycles, this property holds true for families in a $C^{d,r}$-neighborhood of $(\tilde g_a')_a$. Taking the union over $(\tilde g_a)_a\in \hat V$, this define an open and dense subset $\hat V_N$ in $\hat V$, such that for every $(\tilde g_a')_a\in \hat V_N$ and every $a\in [-1,1]^k\setminus[-\frac1N, \frac1N]^k$, the map $\tilde  g_a$ has $N$-different cycles.
Furthermore, for every $\tilde g\in V$, the set $\{(\tilde g_a)_a\in \hat V_N:\; \tilde g_0 =g\}$ is open and dense in $\{(\tilde g_a)_a\in \hat V:\; \tilde g_0=g\}$. 

Now it remains to chose $\rho$ so that $\tilde g$ is invertible, and so a diffeomorphism. 

\paragraph{Invertibility in dimension $n\ge 4$.} 
Let $\rho\colon  (\theta,y)\in \mathbb A\mapsto (\cos(2\pi \theta/6), \sin (2\pi \theta/6), 0,\cdots,0)\in \R^n$. 
By $(iii)$, for every $f\in U\cup \cup_{\hat U}\{f_a:a\}$, for every $\theta \in \mathbb S^1$, the map $f|\{\theta\} \times[-1,1]$ is injective.  Consequently, the following map is injective, and so is an embedding:
\[(\theta,y)\in \mathbb A\mapsto (g(x,y), \rho(\theta))\in \mathbb A \times J^{n-2}.\]
Consequently, for such a $\rho$ and with $\lambda>0$ small enough, the map $\tilde f$ is a diffeomorphism from $\mathbb A\times J^{n-2}$ into itself. It is when known from the Smale solenoid construction that it is possible to extend $\tilde g$  to a diffeomorphism of $\R^n$ equal to the identity outside of a ball.
 
\paragraph{Invertibility in dimension $n= 3$.} We recall that $\mathbb A= \mathbb S^1 \times [-1,1]$.
For this dimension, we use not only $(iii)$ but also $(iv)$: for every $f\in U\cup \bigcup_{\hat U}\{f_a:a\}$, all the sinks have their orbit outside of $I\times[-1,1]$, and the map $f|I\times[-1,1]$ is close to $(\theta,y)\mapsto (Q(\theta)^{d+1},2y/3)$. 
Assume that the interval $I$ is $[-\eta 4^{-d'-1},\eta 4^{-d'-1}]$, for $\eta$ small. Let $I':= [-\eta 4^{-d'-2},\eta 4^{-d'-2}]$.

Let $d\rho\in C^\infty(\R/6\Z, (-\infty, 1/6])$ be equal to $1/6$ on the complement of  $I'$ and with  mean equal to $0$. Let $\rho\in C^\infty (\R/6\Z, \R) $ be the function integrating $d\theta$ and equal to $0$ at $3\in \mathbb R/6\Z$.  We notice that $\rho$ has its values in $[-1/2,1/2]$. Hence the following map is an immersion:
\[(\theta,y)\in \mathbb A\mapsto (g(\theta,y), \rho(\theta))\in \mathbb A \times \R,\]
and by $(iii)$, its restriction to $\mathbb A\setminus I'\times[-1,1]$ is injective. To make it injective on the whole annulus, we shall modify it in the $y$ direction. Let $\phi\in C^\infty (\mathbb A, [0,3])$ be a function equal to $0$ on the complement of $I\times[-1,1]$ and equal to $3$ on $I' \times [-1,1]$. We remark that the following map is injective and so an embedding:
\[(\theta,y)\in \mathbb A\mapsto (g(\theta,y), \rho(\theta))+(0,\phi(\theta), 0)\in \R/6\Z\times[-1,4]\times \R.\]
For $\mu >0$ small, we can extend the map $g$ to $\R/6\Z\times[-2,5]$, so that $g$ is still a local diffeomorphism and its image is in $\R/6\Z\times [-1-\mu ,1+\mu]$. Then we notice that the following map is an embedding:
\[\breve g\colon (\theta,y)\in \R/6\Z\times[-2,5] \mapsto (g(\theta,y), \rho(\theta))+(0,\phi(\theta), 0)\in \R/6\Z\times[-2,5]\times [-1/2,1/2].\] 
Also, for $\lambda>0$ small enough,  the following map is a diffeomorphism onto its image:
\[\tilde g'\colon (\theta,y)\in \R/6\Z\times[-2,5]\times J\mapsto \breve g (\theta,y)+(0,0,\lambda y)\in  \R/6\Z\times[-2,5]\times J.\]
Again, similarly to the Smale solenoid construction that it is possible to extend $\tilde g'$  to a map of $\R^3$ equal to the identity outside of a ball.
As the map $\tilde g'$ and $\tilde g$ are equal on $(\R/6\Z\setminus I)\times [-1,1]\times J$, by $(iv)$, the existence of sinks for these perturbations remains valid.
\end{proof}

\appendix
\section{Proof of Propositions in the hyperbolic dynamics preliminary}\label{section:proofhypprel}
\label{Propositioncartecoolproof}
\begin{proof}[Proof of Proposition \ref{cartecool}]

It is well known that $(W^s_\eta(\Omega_a))_a$ and $(W^u_\eta(\Omega_a))_a$ are continuous families of $C^r$-manifolds. To prove the existence of such families of charts, it suffices to show that the family of local stable and unstable manifolds $(W^s_\eta(\Omega_a))_a$ and $(W^u_\eta(\Omega_a))_a$ are of class $C^{d, r}$.
As one can obtain the proof of the regularity of the stable manifolds from the one of unstable manifolds by inverting the dynamics, we shall show only that
the family $(W^s_\eta(\Omega_a))_a$ is of class $C^{d,r}$.

We remark that the following function vanishes at the graph of $(\Omega_a)_a$:
\[(a,z)\in (-1,1)^k \times \R^n\mapsto (f_a(z)-z).\]
As its derivative with respect to $z$ is an isomorphism, by the implicit function Theorem, the family $(\Omega_a)_a$ is of class  $C^d$. 
Thus by conjugating with a translation, we can suppose that $\Omega_a=\Omega:=0$ for every $a$.

Let $E^s_a$ be the stable direction of $D_\Omega f_a$. We remark that the action of $D_\Omega f_a$ on the Grassmannian of $\dim E^s_a$-plan of $\R^n $ is contraction.
As $a\mapsto D_\Omega f_a^{-1}$ is of class $C^d$, its fixed point $E^s_a$ is a $C^d$-function of $a$. 
Thus, by a linear coordinates change, we can suppose the stable and unstable directions are independent of $a$.

To prepare the proof of Proposition \ref{cartecool4cpct}, we proof this proposition in the more geral case where $(f_a)_a\in C^d((-1,1)^k, Diff^r(E))$, with $E$ a Banach space, so that there exists an invariant splitting  $E=E^s\oplus E^u$ left invariant by $D_{\Omega}f_a$, with $E^s$ $\lambda$-contracted by $D_{\Omega}f_a$ and  $E^u$ are $\lambda$-contracted by $D_{\Omega}f_a^{-1}$, for every $a\in (-1,1)^k$. 

We now push forward the proof of the stable manifold theorem of Irwin written in \cite{Yoyointro}, where more details are given, to get the $C^d$-regularity of the family.
Let $T^s_a:= D_{\Omega}f_a|E^s$ and $T^u_a:= D_{\Omega}f_a|E^u$, and let $p_s\colon E\to E^s$ and $p_u\colon E\to E^u$ be the canonical projections. Let $f^s_a:= p_s\circ f_a$ and $f_a^u:= p_u\circ f_a$. To any sequence $(x_n)_n=(x_n^s,x_n^u )_n\in E^\mathbb N$, we associate the following sequences $(y_n^s)_{n\ge 1}$ and $(y_n^u)_{n\ge 0}$:
\[\left\{\begin{array}{l} y^s_{n+1}= f^s_a(x^s_n, x^u_n)\\
y^u_n= x_n^u+ (T^u_a)^{-1}(x_{n+1}^u-f^u_a(x_{n}^s,x_n^u))\end{array}\right.\]
Observe that 
\begin{equation}\label{implicit}
 f(x_n)=x_{n+1}\Leftrightarrow \left \{\begin{array}{l} x_{n+1}^s= y_{n+1}^s\\ x_{n+1}^u = y_{n+1}^u\end{array}\right.\end{equation} 

We remark that for every $\kappa\in (\lambda,1)$, if $\eta>0$ is sufficiently small, we consider the spaces:
\[\mathcal E:= \{(x_n)_{n\ge 0}\in {E}^{\mathbb N+}\colon \sup_n \kappa^{-n} |x_n|<\infty\}\]
\[\mathcal E^s:= \{(x^s_n)_{n\ge 1}\in {E^s}^{\mathbb N^*}\colon \|(x_n^s)\|_{\mathcal E^s }:= \sup_n \kappa^{-n} |x_n^s|<\infty\}\]
\[\mathcal E^u:= \{(x^u_n)_{n\ge 0}\in {E^u}^{\mathbb N}\colon \|(x_n^u)\|_{\mathcal E^u }:= \sup_n \kappa^{-n} |x_n^u|<\infty\}\]
We endow $E^s$ with the norm $|\cdot |$ induced by the one of $E$. Let $E^s_\eta$ be the $\eta$-ball centered at $0$ of $E^s$.
We equip $ \mathcal E^s\times \mathcal E^u$ and  $\mathcal E=E^s\times \mathcal E^s\times \mathcal E^u$ with the sup norms $\max(\|\cdot \|_{\mathcal E^s},\|\cdot \|_{\mathcal E^u})$ and $\max(|\cdot |, \|\cdot \|_{\mathcal E^s},\|\cdot \|_{\mathcal E^u})$.

From \cite[\textsection 3.2]{Yoyointro}, the following map is well defined at the neighborhood of $0\in \mathcal E$:
\[\theta_a \colon (x_0,(x_n)_{n\ge 1}, (y_n)_{n\ge 0})\in \mathcal E\mapsto ((y_n^s)_n,(y_n^u)_n)\in   \mathcal E^s\times \mathcal E^u\; .\]
Moreover, for every $x^s\in E^s_\eta$, the map $\theta_a(x^s,\cdot)$ is contracting \cite[\textsection 3.2]{Yoyointro}. Let $\phi_a(x^s):= ((\phi_n^s(x^s))_{n\ge 1},(\phi_n^u(x)_{n\ge 0})\in   \mathcal E^s\times \mathcal E^u$ be its fixed point. Let us regard the map:
\[g_a\colon x^s\in E^s_\eta\mapsto \phi_0^u(x^s)\in E^u.\]  
By (\ref{implicit}), the graph of $g_a$ denoted by $W^s_\eta(\Omega_a; f_a)$, is left invariant by $f_a$.

To achieve the proof of the proposition, it suffices to prove that $(a,x^s)\mapsto g_a(x^s)$ has all its derivatives of the form $\partial_a^i\partial^j_{x^s} g_a$, for all $i\le d$ and $j\le r$, which are well defined and continuous. By using the implicit function theorem, it suffices to prove that the following function is $C^1$ and that its derivative of the form $\partial_a^i\partial^j_{x^s_0} $ are well  defined and continuous for all $i\le d$ and $j\le r$:
\[\Phi\colon
(-1,1)^k\times E^s\times \mathcal E^s\times \mathcal E^u\to \mathcal E^s\times \mathcal E^u \]
\[(a, x^s_0,(x^s_n)_{n\ge 1},(x^u_n)_{n\ge 0})\mapsto ((x^s_n-y^s_n)_{n\ge 1},(x^u_n-y^u_n)_{n\ge 0}),\]
\[\text{with}\quad x^s_n-y^s_n=x_n^s-f^s_a(x^s_{n-1}, x^u_{n-1}), \quad\text{and}\quad  x^u_n-y^u_n=(T^u_a)^{-1}(x_{n+1}^u-f^u_a(x_{n}^s,x_n^u).\]
Clearly, $\Phi$ has these derivatives well defined and continuous if and only if this holds for the map:
\[F\colon (a,(x_n)_n)\in (-1,1)^k\times \mathcal E\mapsto (f_a(x_n))_n\in \mathcal E.\]
It is shown in \cite[\textsection 3.3]{Yoyointro} that the derivatives $\partial_{\mathcal E}^j F$ exists and are continuous for every $i\in \{0,\cdots, r\}$. Moreover, it holds:
\[\partial_{\mathcal E}^j F\left(a,(x_n)_{n\ge 0}\right)\colon \left((\Delta x_1^n)_{n\ge 0}, \dots, (\Delta x_j^n)_{n\ge 0}\right)\in \mathcal E^j\mapsto \left(\partial_x^j f_a(x_n)(\Delta x_1^n, \dots, \Delta x_j^n)\right)_{n\ge 0}\in \mathcal E\]
Let us now study the differentiability with respect with the parameter $a$ of $\partial_{\mathcal E}^j F$ at $(x_n)_n\in \mathcal E$.

Let $\Delta a$ be small. For $n\ge 0$, $t\in [0,1]$, $i\le d$, we put:
\[D_{i,j}(t,n):= (\Delta x_1^n, \dots, \Delta x_j^n) \mapsto \partial^i_a\partial^j_x f_{a_t}(x^n)(\Delta x_1^n, \dots, \Delta x_j^n),\quad a_t:= a+t\Delta a.\]  
We have:
\[\partial^j_x f_{a+\Delta a}(x_n)(\Delta x_1^n, \dots, \Delta x_j^n)= \left(\sum_{l=0}^{i}  \frac{1}{l!}D_{l,j}(0,n)(\Delta a)^l+ R_{i,j}\right)(\Delta x_1^n, \dots, \Delta x_j^n),\]
\[R_{i,j}:= \frac{1}{(i-1)!}\int_0^1 (1-t)^{i-1}(D_{i,j}(t,n)-D_{i,j}(0,n)) (\Delta a)^i dt. \]
Because $\kappa <1$, we have $\lim_{n}|x_n|=0$. Hence $D_{i,j}=\sup_{n}\|D_{i,j}(t,n)\|<\infty$. Also the $(l,j)$-linear map:
\[\Delta_{i,j}\colon (\Delta a_1,\dots , \Delta a_l,\Delta x_1^n, \dots, \Delta x_j^n) \mapsto D_{l,j}(0,n)(\Delta a_1,\dots , \Delta a_l)(\Delta x_1^n, \dots, \Delta x_j^n)\]
sends $\mathbb R^l\times \mathcal E^l$ into $\mathcal E$ and has norm $\le D_{l,j}$. From this we conclude easily that $(\partial^i_a\partial_{\mathcal E}^j F)$ exists continuously, is equal to $\Delta_{i,j}$, and for all $i\le d$ and $j\le r$.
\end{proof}
\begin{proof}[Proof of propostion \ref{cartecool4cpct}]

As for Proposition \ref{cartecool}, it suffices to show that $(W^s_\eta(\underline k_a; f_a))_a$ and $(W^u_\eta(\underline k_a; f_a))_a$ are $C^d$-families of $C^r$-manifolds to get the existence of the charts.

Let $\exp$ be the exponential map associated to a Riemannian metric on $M$.

Let $E$ be the space of continuous sections $\sigma$  of the bundle $\pi^* TM$: in particular $\sigma(\underline k)\in T_{k_0} M$, for every $\underline k\in \overleftarrow K$.
We consider the map
\[F^0_a\colon   \sigma\in E\mapsto \left[\underline k\mapsto \exp_{k_0}^{-1} \circ f_a \circ \exp_{k_{-1}} \sigma(\overleftarrow f^{-1} (\underline k))\right] \in E \] 
For $a$ small, the map $F_a^0$ is well defined and continuous on a neighborhood $N$ of the $0$ section. Also $a\mapsto F^0_a\in C^r(N,E)$ is of class $C^d$. Moreover, the zero section is a hyperbolic fixed point of $F^0_a$. Hence the implicit theorem applied to the map $F^0_a-\pi$ gives that the hyperbolic continuation 
\[a\mapsto (\underline k\mapsto i_{f_a}(\underline k)\in M)\]
is of class $C^d$. 
Let $k_0(a):= \pi(i_{f_a}(\underline k))$, for every $a$ and $\underline k\in \overleftarrow K$. Put: 
\[F_a\colon   \sigma\in E\mapsto \left[\underline k\mapsto \exp_{k_0(a)}^{-1} \circ f_a \circ \exp_{k_{-1}(a)} \sigma(\overleftarrow f^{-1} (\underline k))\right] \in E \] 
We remark that there exists a neighborhood $N$ of the zero section so that $a\mapsto F_a\in C^r(N,E)$ is of class $C^d$. Moreover, the zero section is a hyperbolic fixed point of $F_a$ for every $a$.

Let $Gr(TM)\to M$ be the Grassmannian bundle of $M$, with fiber at $z$ the Grassmannian of $Gr(T_zM)$ of $T_zM$. As $f$ is a local diffeomorphism, the map $Df$ acts canonically on $Gr(TM)$ over $f$.

We remark that the family of stable directions $(E^s_{\underline k}(f_a))_{\underline k}$ is a continuous section of $\pi^*Gr(TM)$. Let $G$ be the space of sections of the bundle $\pi^*Gr(TM)$. It is actually a contracting fixed point of the map which associate to a section $\gamma$ of  $\pi^*Gr(TM)$, the section:
\[\gamma\mapsto\left[\underline k\mapsto D_{k_{0}} f_a^{-1}\circ \gamma (\overleftarrow f(\underline k))\right].\]
As the dependence on $a$ is of class $C^d$, the section $(E^s_{\underline k}(f_a))_{\underline k}$ is a $C^d$-function of $a$. Thus, via a linear conjugacy, depending continuously on $\underline k$ and of class $C^d$ on $a$, we can suppose that $E^s_{\underline k}(f_a)=E^s_{\underline k}$ does not depend on $a$, and likewise for 
$E^u_{\underline k}(f_a)=E^u_{\underline k}$. 

Consequently, we are now in the setting to apply the proof of Proposition \ref{cartecool}: by Proposition \ref{cartecool}, there exists a continuous family of $C^{d,r}$-maps $((g_a(\underline k))_a)_{\underline k\in \overleftarrow K}$, with $g_a$ from $E^s_{\eta,\; \underline k}(f_a)$ to  $E^u_{\underline k}(f_a)$, whose graph is sent by the map $\exp_{k_0}$ to a curve $W^s_\eta(\underline k; f_a)$. Moreover the curve $W^s_\eta(\underline k; f_a)$ is sent by $f_a$ into $W^s_\eta(\overleftarrow f_0(\underline k); f_a)$ and is exponentially contracted by $f_a$. Similarly we obtains that the local unstable manifolds  families  $(W^u_\eta(\underline k; f_a))_a$ are $C^{d,r}$ and that these families depend continuously on $\underline k \in \overleftarrow K$.
From these two families of submanifolds, the requested charts are easily constructed.
\end{proof}

\bibliographystyle{alpha}
\bibliography{references}

\end{document}